\newcommand{\CQFD}{\hfill $\square$}
\newenvironment{proof}{{\bf Proof\\}}{\CQFD}
\newtheorem{thm}{\bf Theorem}[section]
\newtheorem{lem}[thm]{\bf Lemma}
\newtheorem{prop}[thm]{\bf Proposition}
\newtheorem{property}[thm]{\bf Properties}
\newtheorem{Def}{\bf Definition}[section]
\newtheorem{Hyp}{\bf Assumption}
\newtheorem{Hyps}[Hyp]{\bf Assumptions}
\newtheorem{rmk}{\bf Remark}[section]
\def\ds{\displaystyle}
\def\a{\alpha}
\def\b{\beta}
\def\t{\tilde}
\def\uu{\underline{u}}
\def\ou{\overline{u}}
\def\l{\lambda}
\def\phii{\varphi_i}
\def\O{\Omega}
\def\OT{\Omega\times(0,T)}
\def\OiT{\Omega_i\times(0,T)}
\def\Vv{\mathcal{V}}
\def\Pp{\mathcal{P}}
\def\dx{\delta x}
\def\dt{\delta t}
\def\eps{\varepsilon}
\newcommand{\R}{\mathbb{R}}
\newcommand{\N}{\mathbb{N}}
\newcommand{\Tt}{\mathcal{T}}
\newcommand{\Dd}{\mathcal{D}}
\newcommand{\Xx}{\mathcal{X}}
\newcommand{\llbrack}{\rm [ \! [}
\newcommand{\rrbrack}{\rm ] \! ]}
\def\ue{u^\epsilon}
\def\nn{\nonumber}
\begin{document}
\title{Finite volume scheme for two-phase flows in heterogeneous porous media involving capillary pressure discontinuities}

\author{Cl\'ement Canc\`es\footnote{ENS Cachan Bretagne, IRMAR, UEB, av. Robert Schuman, 35170 Bruz -- France, \href{mailto:clement.cances@bretagne.ens-cachan.fr}{\texttt{clement.cances@bretagne.ens-cachan.fr}}}
\footnote{The author is partially supported by the Groupement MoMaS}}
\date{}%
\maketitle
\begin{abstract} 
We study a one-dimensional model for two-phase flows in heterogeneous media, in which the capillary pressure functions can 
be discontinuous with respect to space. We first give a model, leading to a system of degenerated nonlinear parabolic equations spatially coupled  by nonlinear transmission conditions. 
We approximate the solution of our problem thanks to a monotonous finite volume scheme. The convergence of the underlying discrete solution 
to a weak solution when the discretization step tends to $0$ is then proven. We also show, under assumptions on the initial data, a uniform estimate on the flux, which is then used during the uniqueness proof. A density argument allows us to relax the assumptions on the initial data and to extend the existence-uniqueness frame to 
a family of solution obtained as limit of approximations. A numerical example is then given to illustrate the behavior of the model.
 \end{abstract}

\hskip 0.3cm {{\bf MSC subject classification.} 35R05, 65M12}

{\hskip 0.3cm {\bf keywords.} capillarity discontinuities, degenerate parabolic equation, finite volume scheme}
\maketitle
\section*{Introduction}

The models of immiscible two-phase flows in porous media are widely used in petroleum 
engineering in order to predict the positions where oil could be collected.  The discontinuities of the 
physical characteristics due to brutal change of lithology  play a crucial role in the phenomenon of oil trapping, 
preventing the light hydrocarbons from reaching the surface. It seems that the discontinuities with respect to the 
space variable of a particular function, called the capillary pressure, are responsible of the phenomenon of oil-trapping \cite{vDMdN95,BPvD03}.

In this paper, we consider one-dimensional two-phase flows in heterogeneous porous media, which are 
made of several homogeneous submedia. A simplified model of two-phase flow within this rock is described in the first section, leading to the definition of the weak solution.
The transmission conditions at the interface between the different submedia are written using 
the graph formalism introduced in \cite{CGP07} for the connection of the capillary pressures, 
which is simple to manipulate and allows to deal 
with any type of discontinuity of the domain, without any compatibility constraint, contrary to what occurs in \cite{NoDEA} 
and to a lesser extent in \cite{BPvD03,EEM06}.

The graph way to connect the capillary pressures at the interfaces is well suited to be discretized by a monotonous 
Finite Volume scheme. A discretization is proposed in the second  section of the paper. Adapting the material from 
the book of Eymard, Gallou\"et \& Herbin \cite{EGH00} to our case, it is shown that the discrete solution provided by the scheme converges, up to a subsequence, to a weak solution as the step of the discretization tends to $0$. 
The monotonicity of the transmission conditions is fundamental for proving the convergence of the scheme.

Unfortunately, we are not able to show the uniqueness of the weak solution to the problem, because of the lack of regularity.
As it will be shown in the fourth section, supposing that the fluxes are uniformly bounded with regard to space and time 
is sufficient to claim the uniqueness of the solution. The uniqueness proof is an adaptation of the one given in \cite{CGP07} to the case where the convection is not 
neglected. Here again, the monotonicity of the transmission conditions at the interfaces is 
strongly used.

The existence of a bounded flux solution is the topic of Section \ref{Bounded-fluxes}. It is shown that if the initial data is regular 
enough to ensure that the initial flux is bounded with respect to space, then the flux will remain bounded with respect to space and to time. Such a result has already been obtained in \cite{CGP07}, where a parabolic regularization 
of the problem had been introduced. A maximum principle on the flux follows. We also quote \cite{BPvD03}, in which a $BV$-estimate 
is shown on the flux. Since the monotonous schemes introduce some numerical diffusion, a strong analogy can be done between 
a uniformly parabolic regularization of the problem and the numerical approximation via a monotonous scheme. 
The convergence of the discrete solution to a bounded flux solution for regular enough initial data is thus naturally expected and stated in Theorem~\ref{thm_bounded}. The monotonicity of the transmission relations is essential during the proof.

We are able to prove the uniqueness of the bounded-flux solution to the problem using the doubling variable technique. This work performed in Section~\ref{Uniqueness_section} is summarized in Theorem~\ref{unicite_bounded}

In Section \ref{SOLA_section}, a density argument allows to extend the existence and uniqueness frame to any initial data, using the notion of SOLA (Solution Obtained as Limit of Approximation). It is a more restrictive notion than the notion of weak solution, even if we are not able to prove 
the existence of a weak solution which is not a SOLA.
The main result of the paper is given in Theorem~\ref{SOLA_thm}, which claims that the {\it whole sequence} of discrete solutions built using the finite volume scheme introduced in Section~\ref{scheme_part} converges towards the unique SOLA to the problem.

Finally, a numerical example is given in Section~\ref{numer_section}. This example gives an evidence of the entrapment of a certain quantity of oil under the interface.

 %
%
\section{Presentation of the problem}\label{model_part}
We consider a one-dimensional  heterogeneous porous medium, which is an apposition of homogeneous porous media, 
representing the different  geological layers. The physical properties of the medium 
only depend on the rock type and are piece-wise constant. 


For the sake of simplicity, we only deal with two geological layers of same size. A 
generalization to an arbitrary finite number of geological layers would only lead to notation difficulties.  
In the sequel, we denote by  
$\O=(-1,1)$ the heterogeneous porous medium, and by $\O_1=(-1,0)$, $\O_2=(0,1)$ the two homogeneous layers.
The interface between the layers is thus $\{ x=0\}$. $T$ is a positive real value.

We consider an incompressible and immiscible oil-water flow through $\O$. 
Writing the conservation of each phase, 
and using Darcy's law leads to: for all $(x,t)\in\OiT$, 
\begin{equation}\label{conserv_w}
\ds \phi_i\partial_t u - \partial_x\left[ \mu_{o,i}(u)\left(\partial_x P_{o,i} -\rho_o{ g}\right) \right]=0, 
\end{equation}
\begin{equation}\label{conserv_o}
\ds -\phi_i\partial_t u - \partial_x\left[ \mu_{w,i}(u)\left(\partial_x P_{w,i} -\rho_w{{g}}\right) \right]=0,
\end{equation}
where $\phi_i\in (0,1)$ is the porosity of the porous media $\O_i$, $u$ is the oil-saturation (then $(1-u)$ 
is the water-saturation), $\mu_{\beta,i}$ is the mobility of the phase $\beta=w,o$, where  $w$ stands for 
water, and $o$ for oil. We denote by $P_{\beta,i}$ the pressure of the phase $\beta$, by $\rho_\beta$ its density, and by ${g}$ the gravity. 

Adding (\ref{conserv_w}) and (\ref{conserv_o}) shows that :
$$
\partial_x q=0,
$$
where 
\begin{equation}\label{debit_total}
q=-\mu_{w,i}(u)\left(\partial_x P_{w,i} -\rho_w{g}\right) - \mu_{o,i}(u)\left(\partial_x P_{o,i} -\rho_o{{g}}\right)
\end{equation}
is the total flow-rate.
For the sake of simplicity, we suppose that $q$ does not depend on time, even if all the results presented below still 
hold for $q\in BV(0,T)$, as it is shown in \cite[chapter 4]{These}.

Using (\ref{debit_total}) 
in (\ref{conserv_w}) and (\ref{conserv_o}) yields:
\begin{equation}\label{eq_i}
\phi_i\partial_t u + \partial_x \left(\frac{\mu_{o,i}(u)}{\mu_{o,i}(u) + \mu_{w,i}(u)}q + \lambda_i(u) (\rho_o - \rho_w) g - 
\l_i(u) \partial_x(P_{o,i}-P_{w,i}) \right)=0,
\end{equation}
where 
$$\ds \l_i(u) =  \frac{\mu_{o,i}(u)\mu_{w,i}(u)}{\mu_{o,i}(u) + \mu_{w,i}(u)}.$$
One assumes that the capillary pressure $(P_{o,i}-P_{w,i})$ depends only on the saturation and of the rock type. More precisely, $(P_{o,i}-P_{w,i})=\pi_i(u)$, where $\pi_i(u)$ is supposed to be an increasing 
Lipschitz continuous function. The equation~\eqref{eq_i} becomes
\begin{equation}\label{eq_i_1}
\phi_i\partial_t u + \partial_x \left( f_i(u) -
\l_i(u)\partial_x\pi_i(u) \right)=0,
\end{equation} 
where 
$$
f_i(u) = \frac{\mu_{o,i}(u)}{\mu_{o,i}(u) + \mu_{w,i}(u)}q(t) + \l_i(u) (\rho_o - \rho_w) g.
$$
We do the following assumptions on the functions appearing in the equation.
\begin{Hyps}\label{mu_pi} For $i=1,2$, one has:
\begin{enumerate} 
\item $\pi_i$ is an increasing Lipschitz continuous function;
\item $\mu_{o,i}$ is an increasing Lipschitz continuous function on $[0,1]$, with $\mu_{o,i}(0)=0$;
\item $\mu_{w,i}$ is a decreasing Lipschitz continuous function on $[0,1]$, with $\mu_{w,i}(1)=0$.
\end{enumerate}
\end{Hyps}
\begin{rmk}
It is often supposed for such problems that the functions $\mu_{\beta,i}$ are monotonous in a large sense, and that there exist irreducible saturations
$s_{i},S_i \in (0,1)$, with $s_i <  S_i$, such that 
$$
\mu_{o,i}(u)=0  \textrm{ if } u \in [0,s_i], \quad \mu_{w,i}(u)=0  \textrm{ if } u \in [S_i,1].
$$ 
If we assume that the functions $\mu_{\beta,i}$ are strictly monotonous on their support, a convenient 
scaling would allow us to suppose that assumptions~\ref{mu_pi} are fulfilled.
\end{rmk}
We denote by $\phii(s)=\int_0^s \l_i(a) \pi_i'(a)da,$ then \eqref{eq_i_1} can be rewritten 
\begin{equation}\label{eq_i_2}
\phi_i\partial_t u + \partial_x \left( f_i(u) - 
\partial_x\phii(u) \right)=0.
\end{equation}
\begin{property}\label{prop_fonc} It follows directly from assumptions \ref{mu_pi} that for $i=1,2$ :
\begin{enumerate}
\item $f_i$ is Lipschitz continuous and $f_i(0) = 0$, $f_i(1) = q$;
\item $\l_i$ is Lipschitz continuous, and $\l_i(0)=\l_i(1) =0$, $\l_i(u)>0$ if $u>0$;
\item $\phii$ is an increasing Lipschitz continuous fulfilling $\phii(0) = 0$, $\phii'(0) = \phii'(1) = 0$.
\end{enumerate}
\end{property}
We deduce from the properties \ref{prop_fonc} that (\ref{eq_i_2}) is a degenerated nonlinear parabolic equation. 

Let us now focus on the transmission conditions through the interface $\{x=0\}$. 
We denote by $\a_i=\lim_{s\rightarrow0}\pi_i(s)$ and $\b_i=\lim_{s\rightarrow1}
\pi_i(s)$.
We define the monotonous graphs $\tilde\pi_i$ by:
\begin{equation}\label{tpii}
\t\pi_i(s)=\left\{\begin{array}{ll}
\ds \pi_i(s)  &\text{ if } s\in(0,1),\\
\ds (-\infty,\a_i]& \text{ if } s=0,\\
\ds [\b_i,+\infty) &\text{ if } s=1.
\end{array}\right.
\end{equation}

Let $u_i$ denote the trace of $u_{|\O_i}$ on $\{ x = 0\}$ (which is supposed to exist for the moment). The trace on $\{x=0\}$ from $\O_i$ of the pressure $P_{\beta,i}$ of the phase $\beta$  is still denoted by $P_{\beta,i}$.
As it is exposed in \cite{EEM06} (see also \cite{CGP07}), the pressure of the phase $\beta$ can be discontinuous through the interface $\{x=0\}$ in the case where it is missing in the upstream side. This can be written 
\begin{equation}\label{raccord_pressions_partielle}
\mu_{\beta,1}(u_1)(P_{\beta,1}-P_{\beta,2})^+-\mu_{\beta,2}(u_2)(P_{\beta,2}-P_{\beta,1})^+=0, \qquad \beta\in\{o,w\}.
\end{equation}

The conditions (\ref{raccord_pressions_partielle}) have direct consequences on the connection of the capillary pressures  through $\{x=0\}$. Indeed, if $0<u_1,u_2<1$, 
then the partial pressures $P_o$ and $P_w$ have both to be continuous, 
thus the connection of the capillary pressures $\pi_1(u_1)=\pi_2(u_2)$ is satisfied. If $u_1=0$ and $0<u_2 \le 1$, then $P_{o,1} \ge P_{o,2}$ and $P_{w,1} \le P_{w,2}$, thus $\pi_2(u_2)\le \pi_1(0)$. The same way, $u_1=1$ and $0 \le u_2<1$ 
implies $\pi_2(u_2)\ge \pi_1(1)$. 
Checking that the definition of the graphs $\t\pi_1$ and $\t\pi_2$ implies $\t\pi_1(0)\cap\t\pi_2(0)\neq\emptyset$, 
$\t\pi_1(1)\cap\t\pi_2(1)\neq\emptyset$, we can claim that (\ref{raccord_pressions_partielle}) implies: 
\begin{equation}\label{raccord_pi1}
\t\pi_1(u_1)\cap\t\pi_2(u_2)\neq\emptyset.
\end{equation}
The conservation of each phase leads to the connection of the fluxes on $\{x=0\}$. 
Denoting by $F_i$ the flux in $\O_i$, i.e. for all $x\in \O_i$, 
$$
F_i(x,t) = f_i(u)(x,t) - \partial_x \phii(u)(x,t),
$$
the connection of the fluxes through the interface can be written
\begin{equation}\label{raccord_flux}
F_1(0,\cdot) = F_2(0,\cdot),
\end{equation}
where \eqref{raccord_flux} has to be understood in a weak sense.

We now turn to the problem of the boundary conditions.
Because of technical difficulties occurring during section \ref{Uniqueness_section}, we want that the solution 
to the flow admits bounded fluxes, at least for regular initial data. This will force us to consider 
specific boundary conditions, which will involve bounded fluxes.\\
Let $G_i : (a,b) \mapsto G_i(a,b)$ $(i=1,2)$ be a function fulfilling the following properties :
\begin{itemize}
\item $G_i$ is Lipschitz continuous, non-decreasing w.r.t. its first argument, and non-increasing w.r.t. the second. 
\item for all $a\in[0,1]$, $G_i(a,a) = f_i(a)$. 
\end{itemize}
Let $\uu, \ou\in L^\infty(0,T)$, $0 \le \uu, \ou \le 1$ a.e., we choose the boundary condition 
\begin{equation}\label{cond_lim}
F_1(-1, t) = G_1(\uu(t), u(-1,t)), \qquad F_2(1,t) = G_2(u(1,t),\ou(t)).
\end{equation}
The way in which we approximate the boundary condition shall be judiciously compared with the discretization of the boundary conditions for scalar hyperbolic conservation laws using monotonous Finite Volume schemes (see \cite{Vov02}).

We consider an initial data $u_0\in L^\infty(\O)$, with $0 \le u_0 \le 1$, then we can write the initial-boundary-value problem:
\begin{equation}\label{P}\tag{$\Pp$}
\left\{
\begin{array}{l l}
\ds \phi_i \partial_t u +\partial_x\left[f_i(u)-\partial_x\phii(u)\right]=0 & \text{in } \OiT,\!\!\!\!\! \\
F_{1}(0,\cdot)=F_2(0,\cdot)  &  \text{on }(0,T),\\
\t\pi_1(u_1)\cap\t\pi_2(u_2)\neq\emptyset &  \text{on }(0,T),\\
u(t=0)=u_0 &\text{in }\O,\\
F_1(-1, t) = G_1(\uu(t), u(-1,t)) &  \text{on }(0,T), 
\\ F_2(1,t) = G_2(u(1,t),\ou(t)) &  \text{on }(0,T).
\end{array}
\right.
\end{equation}

We now define the notion of weak-solution
\begin{Def}\label{weak_sol_def}
A function $u$ is said to be a {\bf weak solution} to the problem~(\ref{P}) if it fulfills:
\begin{enumerate}
\item $u\in L^\infty (\OT)$, with $0\le u \le 1$;
\item for $i=1,2$, $\phii(u)\in L^2(0,T;H^1(\O_i))$;
\item for a.e. $t\in (0,T)$, $\t\pi_1(u_1(t))\cap\t\pi_2(u_2(t))\neq\emptyset$, where $u_i$ denotes the trace 
of $u_{|\O_i}$ on $\{x=0\}$;
\item for all $\psi\in \Dd(\overline\O\times[0,T[)$, denoting by $u(1,\cdot)$ and $u(-1,\cdot)$ the traces of $u$ on the boundary,
\begin{eqnarray}
&\ds \int_0^T\sum_{i=1,2}\int_{\O_i}\phi_i u(x,t)\partial_t\psi(x,t)dxdt\nn
+\sum_{i=1,2}\int_{\O_i}\phi_i u_0(x)\psi(x,0)dx&\\
&\ds +\int_0^T\sum_{i=1,2}\int_{\O_i}\left[f_i(u)(x,t)-\partial_x\phii(u)(x,t)\right]\partial_x\psi(x,t)dxdt&\nn\\
&\ds +\int_0^T G_1(\uu(t), u(-1,t))\psi(-1,t)dt - \int_0^T G_2(u(1,t),\ou(t))\psi(1,t) dt=0.&\label {weak_formulation}
\end{eqnarray}
\end{enumerate}
\end{Def}

%
%
\section{The finite volume scheme}\label{scheme_part}
In this section, we build an implicit finite volume scheme in order to approximate a solution of~(\ref{P}). 
We will adapt the convergence proofs stated in \cite{CH99,EGH00,EEM06}, which are based on monotonicity properties of the scheme. This will allow us to claim the convergence in $L^p(\OT)$, up to a subsequence, of the discrete solutions built using the finite volume scheme towards a weak solution to the problem as step of the the discretization tends to $0$.

\subsection{The finite volume approximation}
We first need to discretize all the data, so that we can define an approximate problem through the finite volume 
scheme.

{\bf{Discretization of $\O$:}} for the sake of simplicity, we will only deal with uniform 
spatial discretizations. Let $N\in\N^\star$, one defines:
$$\left\{\begin{array}{l l}
\ds x_{j}={j}/{N},& \forall j\in\llbrack -N,N \rrbrack, \\
\ds x_{j+1/2}=\frac{j+1/2}{N},& \forall j\in\llbrack -N,N-1 \rrbrack. \\
\end{array}\right.$$
 One denotes by $\delta x=1/N$.

{\bf{Discretization of $(0,T)$:}} once again, we will only deal with uniform discretizations. 
 Let $M\in\N^\star$, one defines: for all $n\in\llbrack 0, M \rrbrack$, $t^n=nT/M$. One denotes by
$\delta t=T/M$. We denote by $\Dd$ the discretization of $\OT$ deduced of those of $\O$ and $(0,T)$.
 
 {\bf{Discretization of $u_0$:}}  $\forall j  \in\llbrack -N,N-1 \rrbrack$,
\begin{equation}\label{u0_d}
u_{0,\Dd}(x_{j+1/2})= u_{j+1/2}^0 = \frac{1}{\dx}\int_{x_j}^{x_{j+1}} u_0(x)dx.
\end{equation} 

 {\bf{Discretization of the boundary conditions:}}
 $\forall n\in \llbrack 0, M \rrbrack$, 
 $$
 \uu^{n+1} = \frac1\dt\int_{t^n}^{t^{n+1}} \uu(t) dt, \qquad  \ou^{n+1} = \frac{1}\dt\int_{t^n}^{t^{n+1}} \ou(t) dt.
 $$

  {\bf{The Finite Volume scheme:}}
 the first equation of~(\ref{P}) can be rewritten:
 $$
 \phi_i \partial_t u+ \partial_x F_{i}(x,t)=0, \qquad \textrm{ in } \OT
 $$
 with $F_{i}(x,t)=f_{i}(u)-\partial_x\phii(u)$. We consider the following implicit scheme: 
$\forall j\in\llbrack -N,N-1 \rrbrack$, $\forall n\in \llbrack 0, M-1 \rrbrack$,
\begin{equation}\label{schema_1}
\phi_i \frac{u_{j+1/2}^{n+1}-u_{j+1/2}^{n}}{\delta t}\delta x + F_{j+1}^{n+1}-F_j^{n+1}=0
\end{equation}
where $F_j^{n+1}$ is an approximation of the mean flux through ${x_j}$ on $(t^n, t^{n+1})$, and 
$i$ is chosen such that $(x_{j},x_{j+1})\subset \O_i$. This notation will hold all along the paper.
We choose a monotonous discretization of the flux: 
$\forall j\in\llbrack -N+1,-1\rrbrack \cup\llbrack 1,N-1 \rrbrack$, $\forall n\in \llbrack 0, M-1 \rrbrack$,
\begin{equation}\label{Flux_int}
F_j^{n+1}=G_{i}(u_{j-1/2}^{n+1}, u_{j+1/2}^{n+1})-\frac{\phii(u_{j+1/2}^{n+1})-\phii(u_{j-1/2}^{n+1})}{\dx},
\end{equation}
where $G_{i}$ is the same function as the one defined in \eqref{cond_lim}. We also define 
\begin{equation}\label{boundary_d}
F_{-N}^{n+1} = G_{1}(\uu^{n+1}, u_{-N+1/2}), \qquad  F_{N}^{n+1} = G_{2}(u_{N-1/2},\ou^{n+1}),
\end{equation}
\begin{eqnarray}
F_{0}^{n+1} & =  & G_{1}(u_{-1/2}^{n+1},u_{0,1}^{n+1})
-\frac{2(\varphi_1(u_{0,1}^{n+1})-\varphi_1(u_{-1/2}^{n+1}))}{\dx} \label{trans_flux_d}\\
& = & G_2(u_{0,2}^{n+1}, u_{1/2}^{n+1})-\frac{2(\varphi_2(u_{1/2}^{n+1})-\varphi_2(u_{0,2}^{n+1}))}{\dx},
\label{trans_flux_d2}
\end{eqnarray}
where $u_{0,1}^{n+1},u_{0,2}^{n+1}$ moreover satisfy 
\begin{equation}\label{raccord_pression_d}
\t\pi_1(u_{0,1}^{n+1})\cap\t\pi_2(u_{0,2}^{n+1})\neq\emptyset.
\end{equation}
\begin{rmk}\label{FN_borne}
The choice of the boundary conditions $F_{\pm N}^{n+1}$ has been done in order to ensure 
$$
\left| F_{-N}^{n+1} \right| \le \| G_1 \|_\infty < \infty, \qquad  
 \left| F_{N}^{n+1} \right| \le \| G_2 \|_\infty < \infty.
$$
\end{rmk}
Thanks to the following lemma, such a couple $(u_{0,1}^{n+1},u_{0,2}^{n+1})$ is unique in $[0,1]^2$, thus 
the discrete transmission conditions system \eqref{trans_flux_d}-\eqref{trans_flux_d2}-\eqref{raccord_pression_d} 
is well posed.
\begin{lem}\label{lem_trans_d}
For all $(a,b)\in [0,1]^2$, there exists a unique 
couple $(c,d)\in [0,1]^2$ such that:
\begin{equation}\label{trans_d}
\left\{\begin{array}{l}
\ds G_1(a,c)
-\frac{2(\varphi_1(c)-\varphi_1(a))}{\dx}= 
G_2(d,b)-\frac{2(\varphi_2(b)-\varphi_2(d))}{\dx},\\
\ds \t\pi_1(c)\cap\t\pi_2(d)\neq\emptyset.\\
\end{array}\right.
\end{equation}
Furthermore, $(a,b)\mapsto c$ and $(a,b)\mapsto d$ are continuous and nondecreasing w.r.t. each one of their arguments.
\end{lem}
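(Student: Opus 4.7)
The plan is to collapse the two-unknown system~\eqref{trans_d} onto a single scalar equation by parametrising the capillary-pressure compatibility by a common value $p \in \R$. Introduce the continuous nondecreasing pseudo-inverses $\t\pi_i^{-1} : \R \to [0,1]$ defined by $\t\pi_i^{-1}(p) = 0$ for $p \le \a_i$, $\t\pi_i^{-1}(p) = \pi_i^{-1}(p)$ for $\a_i \le p \le \b_i$, and $\t\pi_i^{-1}(p) = 1$ for $p \ge \b_i$. Then $\t\pi_1(c) \cap \t\pi_2(d) \neq \emptyset$ is equivalent to the existence of some $p$ with $c = \t\pi_1^{-1}(p)$ and $d = \t\pi_2^{-1}(p)$. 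Plugging this into the flux equality rewrites \eqref{trans_d} as the scalar equation $H_{a,b}(p) = 0$ where
\[
H_{a,b}(p) := G_1\bigl(a,\t\pi_1^{-1}(p)\bigr) - \frac{2(\varphi_1(\t\pi_1^{-1}(p)) - \varphi_1(a))}{\dx} - G_2\bigl(\t\pi_2^{-1}(p),b\bigr) + \frac{2(\varphi_2(b) - \varphi_2(\t\pi_2^{-1}(p)))}{\dx}.
\]

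Existence then follows from the intermediate value theorem. The function $H_{a,b}$ is continuous and nonincreasing in $p$: each of its four pieces decreases as $p$ grows, since $\t\pi_i^{-1}$ is nondecreasing, $G_1(a,\cdot)$ is nonincreasing, $G_2(\cdot,b)$ is nondecreasing and $\varphi_i$ is nondecreasing. At $p = -\infty$ one has $c(p) = d(p) = 0$, giving
\[
H_{a,b}(-\infty) = G_1(a,0) - G_2(0,b) + \tfrac{2}{\dx}\bigl(\varphi_1(a) + \varphi_2(b)\bigr) \ge 0,
\]
because $G_1(a,0) \ge G_1(0,0) = f_1(0) = 0$, $G_2(0,b) \le G_2(0,0) = f_2(0) = 0$, and $\varphi_i \ge 0$. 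Symmetrically, using $f_i(1) = q$, a direct computation gives $H_{a,b}(+\infty) \le 0$. Any zero $p^\star$ of $H_{a,b}$ then yields a couple $(c,d) = (\t\pi_1^{-1}(p^\star), \t\pi_2^{-1}(p^\star))$ solving~\eqref{trans_d}.

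Uniqueness is the delicate step, because $H_{a,b}$ may well be flat on an interval of $p$-values. The key observation is that $\varphi_i$ is \emph{strictly} increasing on $[0,1]$, since $\varphi_i' = \l_i \pi_i' > 0$ on $(0,1)$ by Property~\ref{prop_fonc} and Assumption~\ref{mu_pi}. Hence the term $-\frac{2\varphi_1(\t\pi_1^{-1}(p))}{\dx}$ is strictly decreasing in $p$ whenever $\t\pi_1^{-1}(p) \in (0,1)$, and similarly for $\varphi_2$. Consequently, any interval on which $H_{a,b}$ is constant must be contained in a region where $\t\pi_1^{-1}(p)$ and $\t\pi_2^{-1}(p)$ both take values in $\{0,1\}$; on such a region each is constant, since they are nondecreasing. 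Thus the pair $(c,d) \in [0,1]^2$ does not depend on the choice of zero $p^\star$, which proves uniqueness.

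Finally, for the monotonicity and continuity statements, a direct inspection of the formula for $H_{a,b}$ shows that $H_{a,b}(p)$ is nondecreasing in $a$ (since $G_1(\cdot,c)$ and $\varphi_1(a)$ are both nondecreasing) and nondecreasing in $b$ (since $-G_2(d,\cdot)$ and $\varphi_2(b)$ are both nondecreasing). Combined with nonincreasingness in $p$, any increase of $a$ or $b$ shifts the zero-set of $H_{a,b}$ to the right, and then the nondecreasing maps $\t\pi_i^{-1}$ give the claimed monotonicity of $c$ and $d$. Continuity follows by a standard compactness argument: along a sequence $(a_n,b_n) \to (a,b)$ any accumulation point of $(c_n,d_n)$ satisfies~\eqref{trans_d} at $(a,b)$ (the flux equality passes to the limit by continuity of $G_i$ and $\varphi_i$; the graph condition by choosing the representatives $p_n$ in the compact interval $[\min_i\a_i, \max_i\b_i]$ and using that the graphs $\t\pi_i$ are closed), hence coincides with the unique solution, so the whole sequence converges. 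The main obstacle throughout is the uniqueness step, and the strict monotonicity of $\varphi_i$ granted by Property~\ref{prop_fonc} is precisely what makes it work.
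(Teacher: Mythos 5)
Your proof is correct and follows essentially the same route as the paper: both collapse the system to a scalar equation in the pressure variable $p$ via the pseudo-inverses $\t\pi_i^{-1}$ (your $H_{a,b}$ is just $-\Lambda$ in the paper's notation), use the sign at $p=\pm\infty$ together with the intermediate value theorem for existence, invoke the strict monotonicity of $\varphi_i$ to see that all zeros of the scalar function yield the same couple $(c,d)$, and derive the monotonicity of $(a,b)\mapsto(c,d)$ from the monotonicity of the scalar function in $(a,b)$. Your treatment of uniqueness and of continuity is, if anything, slightly more explicit than the paper's.
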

\begin{proof}
For $i=1,2$,  $\t\pi_i^{-1}$ are continuous non-decreasing functions, increasing on $[\pi_i(0),\pi_i(1)]$  and constant otherwise.
Then we can build the continuous non-decreasing function $\Lambda$, defined by
$$
\Lambda :\left\{\begin{array}{r c l}
\R& \rightarrow & \R\\
 p& \mapsto & \ds G_2(\t\pi_2^{-1}(p), b)- G_1(a, \t\pi_1^{-1}(p))+\frac{2}{\dx}\left(\varphi_1\circ\t\pi_1^{-1}(p) - \varphi_1(a)+ 
\varphi_2\circ\t\pi_2^{-1}(p) -\varphi_2(b)\right).
\end{array}\right.
$$
For all $p$ such that $\Lambda(p)=0$, the couple $(\t\pi_1^{-1}(p), \t\pi_2^{-1}(p))$ is a solution to the discrete transmission conditions system \eqref{trans_flux_d}-\eqref{trans_flux_d2}-\eqref{raccord_pression_d}.
It is easy to check, using the monotonicity of the functions $G_i$ that for all $p\le \min \pi_i(0)$, $\Lambda(p) \le 0$. Symmetrically, for all $p \ge \max \pi_i(1)$, $\Lambda(p)\ge 0$. Thus there exists $p_\star$ such that $\Lambda(p_\star)=0$.

Suppose that there exists $i$ such that $p_\star \in (\pi_i(0),\pi_i(1))$, then since $\phii$ is increasing, $\Lambda$ is increasing 
on a neighborhood of $p_\star$, and then the solution to the system \eqref{trans_d} is unique.

Suppose now that $p_\star\notin\bigcup_i (\pi_i(0),\pi_i(1))$. Either $p_\star\le \min_i \pi_i(0)$, then $c=d=0$, or 
$p_\star \ge \max_i \pi_i(1)$, then $c=d=1$, or $p_\star \in [\pi_k(1), \pi_l(0)]$ for $k\neq l$. We can suppose without any 
loss of generality that $p_\star \in [\pi_1(1), \pi_2(0)]$, then the unique solution to the system \eqref{trans_d} is given by 
$c=1$, $d=0$.

To conclude the proof of the lemma, it only remains to check that $(a,b) \mapsto \Lambda$ is decreasing w.r.t. each one of its arguments, then the monotonicity 
of $\Lambda$ and $\t\pi_i^{-1}$ ensures that $(a,b)\mapsto c$ and $(a,b)\mapsto d$ are non-decreasing. 
\end{proof}

\subsection{Existence and uniqueness of the discrete solution} 

We will now work on the implicit finite volume scheme given 
by~(\ref{u0_d})-(\ref{raccord_pression_d}) to show that this approximate problem is well-posed.
\begin{Def}\label{disc_sol}
Let $N,M$ be two positive integers and $\Dd$ be the associated  discretization of $\OT$. One defines:
$$
\Xx_{\Dd,i}=\left\{\begin{array}{c} z\in L^\infty(\OiT)\ /\ \forall (x_j,x_{j+1})\subset \O_i, \forall n\in \llbrack 0,M-1 \rrbrack, \\
  z_{|{(x_j,x_{j+1})\times(t^n,t^{n+1}]}} \text{ is a constant}  \end{array} \right\},
$$
and
$$
\Xx_{\Dd}=\left\{ z\in L^\infty(\OT)\ /\  \forall i=1,2, \  z_{|\OiT}\in\Xx_{\Dd,i} \right\}.
$$
One defines
$u_\Dd(x,t)\in\Xx_\Dd$, called {\bf discrete solution}, given almost everywhere in $(-1,1)\times(0,T)$ by: 
for all $j\in\llbrack -N,N-1\rrbrack$, for all $n\in \llbrack 0, M-1\rrbrack$, 
$$ 
\left\{\begin{array}{l}
u_\Dd(x,0)=u_{0,\Dd}(x) = u_{j+1/2}^0 \text{ if } (x,t)\in(x_j,x_{j+1}), \\
u_\Dd(x,t)=u_{j+1/2}^{n+1}  \text{ if } (x,t)\in(x_j,x_{j+1})\times(t_n, t_{n+1}],
\end{array}\right.
$$
where $\left( u_{j+1/2}^{n+1} \right)_{j,n}$ are given by the scheme \eqref{schema_1}.
\end{Def}
The monotonicity of the flux $F_j^{n+1}$ w.r.t. $\left(u_{k+1/2}^{n+1}\right)_k$  allows us to rewrite 
the scheme \eqref{schema_1} under the form
\begin{equation}\label{comp_schema_0}
 H_{j+1/2} \left(u_{j+1/2}^{n+1}, u_{j+1/2}^{n} \left(u_{k+1/2}^{n+1}\right)_{k\neq j} \right) =0, 
\end{equation}
where $H_{j+1/2}$ is continuous, increasing w.r.t. its first argument, and non-increasing w.r.t. all the others.

\begin{Def}\label{super_sub_d}
A function $v_\Dd$ is said to be a {\bf discrete supersolution} (resp. $w_\Dd$ is a  discrete subsolution) if it belongs to $\Xx(\Dd)$, and 
if it satisfies: $\forall j\in \llbrack -N,N-1\rrbrack,$
\begin{eqnarray*}
&\ds H_{j+1/2} \left(v_{j+1/2}^{n+1}, v_{j+1/2}^{n} \left(v_{k+1/2}^{n+1}\right)_{k\neq j} \right)  \ge 0,
&  \\
 {\textrm{{\Huge(}resp.}} & H_{j+1/2} \left(w_{j+1/2}^{n+1}, w_{j+1/2}^{n} \left(w_{k+1/2}^{n+1}\right)_{k\neq j} \right)  \le 0
& \textrm{\Huge).}
\end{eqnarray*}
\end{Def}
\begin{rmk}\label{sup_sub_rmk}
A function $u_\Dd$ is a discrete solution to the scheme if and only if it is both a supersolution and a subsolution.
\end{rmk}
\begin{rmk}\label{01}
It follows from the definition of the scheme, particularly from the definitions of the discrete boundary conditions \eqref{boundary_d} and of the discrete fluxes at the interface 
\eqref{trans_flux_d}-\eqref{trans_flux_d2},  that the constant function equal to $1$ is a discrete supersolution, and 
that the constant function equal to $0$ is a discrete subsolution.
\end{rmk}
We now focus on the existence and the uniqueness of the discrete solution to the 
scheme. In order to prove the 
existence of a discrete solution, we first need an a priori estimate on it.
\begin{lem}\label{comp_d_lem}
Let $u_\Dd$ be a discrete solution to the scheme associated to the initial data $u_{0,\Dd}$,  
let $v_\Dd$ be a discrete supersolution associated to the initial data $v_{0,\Dd}$,  then for all 
$t\in [0,T]$, 
$$
\sum_{i=1,2}\int_{\O_i} \phi_i \left(u_\Dd(x,t) - v_\Dd(x,t)\right)^+ dx \le \sum_{i=1,2} \int_{\O_i} \phi_i \left(u_{0,\Dd}(x) - v_{0,\Dd}(x)\right)^+ dx.
$$
Symmetrically, if $w_\Dd$ is a subsolution associated to the initial $w_{0,\Dd}$, 
$$
\sum_{i=1,2}\int_{\O_i} \phi_i \left(u_\Dd(x,t) - w_\Dd(x,t)\right)^- dx \le \sum_{i=1,2} \int_{\O_i} \phi_i \left(u_{0,\Dd}(x) - w_{0,\Dd}(x)\right)^- dx.
$$
\end{lem}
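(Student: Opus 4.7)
I will focus on the first (supersolution) inequality; the subsolution case follows from an analogous argument applied to $(u-w)^-$, with the monotonicity directions in the sign trick reversed. The proof proceeds by induction on the time index, so it suffices to establish the one-step contraction
$$\sum_j \phi_i \dx (u_{j+1/2}^{n+1} - v_{j+1/2}^{n+1})^+ \le \sum_j \phi_i \dx (u_{j+1/2}^n - v_{j+1/2}^n)^+$$
(with the paper's convention that $i$ is the subdomain of cell $j$), and then iterate over $n\in\llbrack 0,M-1\rrbrack$. Since $u_\Dd$ and $v_\Dd$ are piecewise constant in time, this yields the stated bound for every $t\in[0,T]$.

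The workhorse is a discrete Kato-type inequality built directly on the monotone structure of the scheme. By Remark~\ref{sup_sub_rmk}, one has cell by cell
$$H_{j+1/2}\bigl(u_{j+1/2}^{n+1}, u_{j+1/2}^n, (u_{k+1/2}^{n+1})_{k\neq j}\bigr) = 0 \le H_{j+1/2}\bigl(v_{j+1/2}^{n+1}, v_{j+1/2}^n, (v_{k+1/2}^{n+1})_{k\neq j}\bigr),$$
and, by~\eqref{comp_schema_0}, $H_{j+1/2}$ is increasing in its first argument and non-increasing in all the others---a property that descends from the monotonicities of $G_i$ and $\phii$, and, at the cells adjacent to $\{x=0\}$, from Lemma~\ref{lem_trans_d}. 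Subtracting the equality from the inequality and applying the usual discrete sign trick---multiplying by $\1_{\{u_{j+1/2}^{n+1} > v_{j+1/2}^{n+1}\}}$, or equivalently inserting $u_{j+1/2}^{n+1}\vee v_{j+1/2}^{n+1}$ in the first slot of $H_{j+1/2}$ and $u\wedge v$ in the others---I expect to obtain, for each cell,
$$\phi_i \dx \frac{(u_{j+1/2}^{n+1} - v_{j+1/2}^{n+1})^+ - (u_{j+1/2}^n - v_{j+1/2}^n)^+}{\dt} + \mathcal{F}_{j+1}^{n+1} - \mathcal{F}_j^{n+1} \le 0,$$
where $\mathcal{F}_j^{n+1}$ is a ``positive-part'' numerical flux built from the appropriate max/min combinations of $u^{n+1}$ and $v^{n+1}$.

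Summing over $j\in\llbrack -N, N-1\rrbrack$, the interior fluxes $\mathcal{F}_j^{n+1}$ telescope; only the physical-boundary terms at $x=\pm 1$ and the interface contribution at $x=0$ remain. At $x=\pm 1$, both $u_\Dd$ and $v_\Dd$ share the same data $\underline u^{n+1}$, $\overline u^{n+1}$, and the monotonicity of $G_1$ (non-increasing in its second argument) and of $G_2$ (non-decreasing in its first) makes these terms non-positive. The main obstacle is the interface at $x=0$: the values $u_{0,1}^{n+1}, u_{0,2}^{n+1}$ (and their $v$-counterparts) are defined only implicitly through~\eqref{trans_flux_d}--\eqref{raccord_pression_d}, and one must check that the Kato machinery closes up across the interface. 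Here Lemma~\ref{lem_trans_d} is essential: because $(a,b)\mapsto c$ and $(a,b)\mapsto d$ are non-decreasing in each argument, the two representations~\eqref{trans_flux_d}--\eqref{trans_flux_d2} of $F_0^{n+1}$ are compatible with the sign trick and the interface contribution is non-positive, closing the induction step.
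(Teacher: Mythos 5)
Your proposal follows essentially the same route as the paper's proof: a one-step contraction obtained from the monotonicity of $H_{j+1/2}$ via the lattice (max/min) substitution, telescoping of the conservative fluxes, and the monotonicity of $G_i$ and of the interface map $(a,b)\mapsto(c,d)$ from Lemma~\ref{lem_trans_d} (which is what makes $H_{\pm 1/2}$ monotone) to control the boundary and interface contributions. One small correction to your sketch: for the $(u-v)^+$ bound against a supersolution the coherent substitution is $u\wedge v$ in \emph{every} slot of $H_{j+1/2}$ --- so that $u\wedge v$ is itself a discrete supersolution with data $u^n\wedge v^n$, to be subtracted from the equation satisfied by $u$, yielding $u-u\wedge v=(u-v)^+$ --- rather than $u\vee v$ in the first slot and $u\wedge v$ in the others, which produces a quantity that does not telescope into the desired positive part.
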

\begin{proof}
Denoting by $a\top b = \max(a,b)$, and $a\bot b = \min(a,b)$, it follows from the monotonicity of the functions $H_{j+1/2}$ implies that 
$$
H_{j+1/2} \left(u_{j+1/2}^{n+1}, u_{j+1/2}^{n}\top w_{j+1/2}^n \left(u_{k+1/2}^{n+1}\top w_{k+1/2}^{n+1}\right)_{k\neq j} \right) \le 0,
$$
$$
H_{j+1/2} \left(w_{j+1/2}^{n+1}, u_{j+1/2}^{n}\top w_{j+1/2}^n 
\left(u_{k+1/2}^{n+1}\top w_{k+1/2}^{n+1}\right)_{k\neq j} \right) \le 0,
$$
where $w_\Dd$ is a subsolution.
Since $u_{j+1/2}^{n+1}\top w_{j+1/2}^{n+1}$ is either equal to $u_{j+1/2}^{n+1}$ or to $w_{j+1/2}^{n+1}$, 
\begin{equation}\label{comp_schema_1}
H_{j+1/2} \left(u_{j+1/2}^{n+1}\top w_{j+1/2}^{n+1}, u_{j+1/2}^{n}\top w_{j+1/2}^n 
\left(u_{k+1/2}^{n+1}\top w_{k+1/2}^{n+1}\right)_{k\neq j} \right) \le 0.
\end{equation}
Thanks to the conservativity of the scheme, subtracting \eqref{comp_schema_0} to 
\eqref{comp_schema_1}, and summing on $j\in \llbrack -N,N-1\rrbrack$ yields
 \begin{equation*}
 \sum_{i=1,2}\int_{\O_i} \phi_i \left(u_\Dd(x,t^{n+1}) - w_\Dd(x,t^{n+1})\right)^- dx \le  
 \sum_{i=1,2}\int_{\O_i} \phi_i \left(u_\Dd(x,t^{n}) - w_\Dd(x,t^{n})\right)^- dx.
 \end{equation*}
 Since this inequality holds for any $n\in \llbrack 0, M-1 \rrbrack$,
it directly gives: $\forall t\in [0,T]$, 
 \begin{equation}\label{comp_schema_2}
 \sum_{i=1,2}\int_{\O_i} \phi_i \left(u_\Dd(x,t) - w_\Dd(x,t)\right)^- dx \le  
 \sum_{i=1,2}\int_{\O_i} \phi_i \left(u_\Dd(x,0) - w_\Dd(x,0)\right)^- dx.
 \end{equation}
 The proof of the discrete comparison principle between a discrete solution and a discrete supersolution can be performed similarly.
\end{proof}

Let us now state the existence and the uniqueness of the discrete solution.
\begin{prop}\label{existence_unicite_d}
Let $u_0\in L^\infty(\O),$ $0\le u_0 \le 1$ a.e., then there exists a unique discrete solution $u_\Dd$ to the scheme, which furthermore fulfills 
$0 \le u_\Dd \le 1$ a.e.. Moreover, if $v_0$ stands for another initial data, $0 \le v_0 \le 1$, approximated by $v_{0 ,\Dd}$ following \eqref{u0_d}, and if we denote by $v_\Dd$ the corresponding discrete solution, then the following $L^1$-contraction principle holds;
$$
\sum_{i=1,2} \int_{\O_i} \phi_i \left( u_\Dd(x,t) - v_\Dd(x,t) \right)^\pm dx \le \sum_{i=1,2} \int_{\O_i} \phi_i \left( u_{0,\Dd}(x) - v_{0,\Dd}(x) \right)^\pm dx, \qquad \forall t\in [0,T].
$$ 
\end{prop}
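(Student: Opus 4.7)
My approach is to combine the $L^1$-contraction for discrete solutions versus discrete super- and subsolutions proved in Lemma \ref{comp_d_lem} with the fact that the constants $0$ and $1$ are a discrete sub- and supersolution (Remark \ref{01}), and then to obtain existence at each time step by a topological-degree argument in which Lemma \ref{lem_trans_d} is used to tame the interface transmission.

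Uniqueness and the $L^1$-contraction are direct consequences of Lemma \ref{comp_d_lem}. If $u_\Dd$ and $v_\Dd$ are two discrete solutions with initial data $u_{0,\Dd}$ and $v_{0,\Dd}$, Remark \ref{sup_sub_rmk} says that $v_\Dd$ is simultaneously a discrete supersolution and a discrete subsolution. Applying Lemma \ref{comp_d_lem} once with $v_\Dd$ in the supersolution role and once in the subsolution role yields the two announced inequalities; the particular case $u_{0,\Dd} = v_{0,\Dd}$ gives uniqueness. Similarly, \eqref{u0_d} and the hypothesis $0 \le u_0 \le 1$ force $0 \le u_{0,\Dd} \le 1$ a.e., and Lemma \ref{comp_d_lem} applied with $v_\Dd \equiv 1$ and $w_\Dd \equiv 0$ (both legitimate by Remark \ref{01}) yields $0 \le u_\Dd \le 1$ a.e.\ as soon as existence is established.

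For existence I proceed by induction on $n$: given $U^n = (u^n_{j+1/2})_j \in [0,1]^{2N}$, I construct $U^{n+1}$ satisfying \eqref{schema_1}, \eqref{boundary_d} and the interface closure \eqref{trans_flux_d}--\eqref{raccord_pression_d}. By Lemma \ref{lem_trans_d}, the interface traces $u^{n+1}_{0,1}$ and $u^{n+1}_{0,2}$ are uniquely determined, continuous and monotone functions of $u^{n+1}_{-1/2}$ and $u^{n+1}_{1/2}$, so that $F_0^{n+1}$ becomes a continuous function of the cell-center unknowns alone. The time step then takes the form $\Psi(U^{n+1}) = 0$ for a continuous map $\Psi : \R^{2N} \to \R^{2N}$ that inherits the monotonicity structure of the $H_{j+1/2}$. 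On the open cube $\Oo = (-1,2)^{2N}$ I use the linear homotopy
\[
\Psi_s(U) = s\,\Psi(U) + (1-s)(U - U^n), \qquad s\in[0,1].
\]
The endpoint $\Psi_0$ has Brouwer degree $\pm 1$ on $\Oo$ (unique zero $U^n$ in the interior), and each $\Psi_s$ retains the monotonicity structure used in Lemma \ref{comp_d_lem}; a componentwise check shows that $\mathbf{0}$ (resp.\ $\mathbf{1}$) remains a subsolution (resp.\ supersolution) of $\Psi_s = 0$. The comparison argument of that lemma then confines every zero of $\Psi_s$ to $[0,1]^{2N} \subset \Oo$, so no zero reaches $\partial \Oo$ along the homotopy. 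Homotopy invariance of the degree transfers nontriviality to $\Psi_1 = \Psi$, producing the desired $U^{n+1} \in [0,1]^{2N}$.

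The main obstacle is this last step: the implicit coupling through the nonlinear graph transmission at $\{x=0\}$ obstructs any naive fixed-point iteration. Lemma \ref{lem_trans_d} is precisely what is needed to hide the interface unknowns behind a continuous monotone closure, so that the discrete maximum principle coming from Lemma \ref{comp_d_lem}, fed into the standard topological-degree machinery, closes the argument.
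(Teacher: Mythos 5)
Your proof is correct and follows essentially the same route as the paper: the $L^\infty$ bound comes from Remark \ref{01} combined with Lemma \ref{comp_d_lem}, uniqueness and the $L^1$-contraction come from Lemma \ref{comp_d_lem} via Remark \ref{sup_sub_rmk}, and existence rests on the a priori estimate plus a topological-degree argument --- the paper simply delegates that last step to \cite{EGGH98}, whereas you write out the homotopy explicitly. The only detail worth adding is that the nonlinearities ($G_i$, $\varphi_i$, $f_i$ and the interface closure of Lemma \ref{lem_trans_d}, all defined only on $[0,1]$) must be extended continuously outside $[0,1]$ so that $\Psi$ is actually defined on the whole cube $(-1,2)^{2N}$.
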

\begin{proof}
It follows from Remark~\ref{01} and from Lemma~\ref{comp_d_lem} that the following $L^\infty$ a priori estimate holds:
$$
0 \le u_\Dd (x,t) \le 1, \qquad \textrm{ for all }t\in[0,T], \textrm{ for almost all } x\in\O.
$$
Thanks to this estimate, mimicking the proof given in \cite{EGGH98}, we can claim the existence of a discrete solution $u_\Dd$.
Suppose that $u_\Dd$ and $v_\Dd$ are two solutions associated to the initial data 
$u_{0,\Dd}$ and $v_{0,\Dd}$. As it was stressed in the remark~\ref{sup_sub_rmk}, both $u_\Dd$ and $v_\Dd$ are both 
discrete sub- and supersolutions. Then, Lemma~\ref{comp_d_lem} ensures that the following $L^1$-contraction principle holds:
$$
\sum_{i=1,2} \int_{\O_i} \phi_i \left( u_\Dd(x,t) - v_\Dd(x,t) \right)^\pm dx \le \sum_{i=1,2} \int_{\O_i} \phi_i \left( u_{0,\Dd}(x) - v_{0,\Dd}(x) \right)^\pm dx, \qquad \forall t\in [0,T].
$$ 
The uniqueness of the discrete solution $u_\Dd$ corresponding to the initial data $u_0$ follows.
\end{proof}
\subsection{The $L^2((0,T);H^1(\O_i))$ estimates}

The current subsection is devoted to the proof of the discrete energy estimate stated in Proposition \ref{prop_L2H1_d}. 
Since the discrete solutions are only piecewise constant, we need to introduce discrete semi-norms, which are discrete 
analogues to the $L^2((0,T);H^1(\O_i))$ semi-norms.

\begin{Def}\label{def_L2H1_d}
Let $i=1,2$, one defines the discrete $L^2(0,T; H^1(\O_i))$ semi-norms $|\cdot |_{1,\Dd,i}$ on $\Xx_{\Dd,i}$ by:
$\forall z \in \Xx_{\Dd,i}$,
$$ 
|z|_{1,\Dd,i}^2=\sum_{n=0}^{M-1}\dt \sum_{j\in J_{\text{int}, i}} 
\dx\left( \frac{Êz(x_{j+1/2},t^{n+1})-Êz(x_{j-1/2},t^{n+1})ÊÊ}{\dx} \right)^2,
$$
where $J_{\text{int}, 1}=\llbrack -N+1,-1 \rrbrack$ and  $J_{\text{int}, 2}=\llbrack 1,N-1 \rrbrack$.
\end{Def}
\begin{prop}\label{prop_L2H1_d}
For $i=1,2$, one defines the Lipschitz continuous increasing functions 
$$\ds \xi_i: s\mapsto \int_0^s \sqrt{\l_i(a)}\pi_i'(a)da.$$
There exists $C>0$ only depending on $\pi_i,\phi_i,T,G_i$ such that:
$$
\sum_{i=1,2} |\xi_i(u_\Dd)|_{1,\Dd,i}^2 \le C.
$$
\end{prop}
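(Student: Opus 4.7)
The strategy is to multiply the scheme \eqref{schema_1} by $\dt\, \pi_i(u_{j+1/2}^{n+1})$ and sum over $(j,n)$. This is the discrete analogue of testing the PDE by $\pi_i(u)$, and in the continuous setting the corresponding computation yields exactly the bound on $\int_0^T\|\partial_x\xi_i(u)\|_{L^2(\O_i)}^2\,dt$ via the identity $(\partial_x\xi_i(u))^2=\lambda_i(u)(\pi_i'(u))^2(\partial_x u)^2=\partial_x\varphi_i(u)\,\partial_x\pi_i(u)$. For the time contribution I would exploit the convexity of $E_i(s):=\int_0^s\pi_i(\sigma)\,d\sigma$ (which follows since $\pi_i$ is non-decreasing) through the chain-rule inequality $(u_{j+1/2}^{n+1}-u_{j+1/2}^{n})\pi_i(u_{j+1/2}^{n+1})\ge E_i(u_{j+1/2}^{n+1})-E_i(u_{j+1/2}^{n})$; this telescopes in $n$ and is uniformly bounded because $u_\Dd\in[0,1]$ and $E_i$ is bounded.

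Discrete summation by parts (Abel) is performed in each subdomain separately. It isolates the interior expression
$$-\sum_{j\in J_{\text{int},i}}F_j^{n+1}\big(\pi_i(u_{j+1/2}^{n+1})-\pi_i(u_{j-1/2}^{n+1})\big),$$
the boundary fluxes $F_{\pm N}^{n+1}$, and a single interface contribution $F_0^{n+1}\bigl(\pi_1(u_{-1/2}^{n+1})-\pi_2(u_{1/2}^{n+1})\bigr)$. Inserting the interior flux \eqref{Flux_int}, the diffusive piece $\frac{(\varphi_i(u_{j+1/2})-\varphi_i(u_{j-1/2}))(\pi_i(u_{j+1/2})-\pi_i(u_{j-1/2}))}{\dx}$ is bounded below by $\frac{(\xi_i(u_{j+1/2})-\xi_i(u_{j-1/2}))^2}{\dx}$ thanks to the Cauchy-Schwarz inequality applied to $\xi_i'=\sqrt{\lambda_i}\pi_i'$ and $\varphi_i'=\lambda_i\pi_i'$. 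This produces exactly the quantity $\sum_i |\xi_i(u_\Dd)|_{1,\Dd,i}^2$ on the left-hand side.

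The boundary terms at $x=\pm 1$ are controlled immediately by Remark~\ref{FN_borne} (which gives $|F_{\pm N}|\le\max_i\|G_i\|_\infty$) and by the $L^\infty$-bound on $\pi_i$. For the interface term I would split
$$\pi_1(u_{-1/2})-\pi_2(u_{1/2})=\bigl[\pi_1(u_{-1/2})-\pi_1(u_{0,1})\bigr]+\bigl[\pi_1(u_{0,1})-\pi_2(u_{0,2})\bigr]+\bigl[\pi_2(u_{0,2})-\pi_2(u_{1/2})\bigr]$$
and use the two expressions \eqref{trans_flux_d}--\eqref{trans_flux_d2} for $F_0$ on the first and third pieces; Cauchy-Schwarz applied to these yields extra \emph{positive} half-cell dissipation terms $(2/\dx)(\xi_i(u_{0,i})-\xi_i(u_{\mp 1/2}))^2$ that reinforce the LHS, plus bounded $G_i$-driven remainders. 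The middle piece is treated via the graph matching \eqref{raccord_pression_d}: when $u_{0,1},u_{0,2}\in(0,1)$ it simply vanishes, and a short case analysis in the degenerate situations $u_{0,i}\in\{0,1\}$ shows that the sign of $F_0$ together with the sign of $\pi_1(u_{0,1})-\pi_2(u_{0,2})$ forced by the graphs makes the product harmless. For the convective sum, I would introduce $\Phi_i(u):=\int_0^u f_i(s)\pi_i'(s)\,ds$ and write
$$G_i(a,b)\bigl(\pi_i(b)-\pi_i(a)\bigr)=\bigl[\Phi_i(b)-\Phi_i(a)\bigr]+\int_a^b\bigl(G_i(a,b)-f_i(s)\bigr)\pi_i'(s)\,ds;$$
the bracket telescopes in $j$ and contributes only a bounded boundary value, whereas the remainder has integrand of size $L_G|a-b|+L_f|b-s|$ by the monotonicity-based inequalities $G_i(a,b)\le\min(f_i(a),f_i(b))$ for $a\le b$ (and symmetrically for $a\ge b$), and can then be handled by a Young inequality $|G_i(\pi_i(b)-\pi_i(a))|\le \frac{\dx}{4\eps}G_i^2+\frac{\eps}{\dx}(\pi_i(b)-\pi_i(a))^2$ whose first summand integrates to a bounded quantity of order $T\|G_i\|_\infty^2$.

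The hardest technical point is the absorption of the residual $\frac{\eps}{\dx}(\pi_i(b)-\pi_i(a))^2$ back into the left-hand-side dissipation. Since $\lambda_i$ vanishes at $0$ and $1$, $\xi_i$ is \emph{not} bi-Lipschitz-equivalent to $\pi_i$, so $(\pi_i(b)-\pi_i(a))^2$ cannot be dominated pointwise by $(\xi_i(b)-\xi_i(a))^2$; the absorption must therefore be routed through the full dissipative product $(\varphi_i(b)-\varphi_i(a))(\pi_i(b)-\pi_i(a))$ (for instance by writing $(\pi_i(b)-\pi_i(a))^2\le L_\pi(b-a)(\pi_i(b)-\pi_i(a))$ and borrowing the needed fraction of dissipation). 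Once $\eps$ is chosen small enough that a strictly positive multiple of $\sum |\xi_i(u_\Dd)|_{1,\Dd,i}^2$ survives on the left, combining this with the bounded time, boundary, interface and telescoping $\Phi_i$ contributions closes the estimate with a constant depending only on $\pi_i$, $\phi_i$, $T$, $G_i$.
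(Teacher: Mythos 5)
Your overall architecture coincides with the paper's: multiply the scheme by $\dt\,\pi_i(u_{j+1/2}^{n+1})$ and sum, use the convexity of $s\mapsto\int_0^s\pi_i(a)\,da$ for the time term, apply Cauchy--Schwarz to extract $(\xi_i(b)-\xi_i(a))^2/\dx$ from the diffusive product, split $\pi_1(u_{-1/2}^{n+1})-\pi_2(u_{1/2}^{n+1})$ into three pieces at the interface (your ``middle piece'' case analysis is exactly Lemma~\ref{monotony_d_lem1}, and the two outer pieces do yield the extra half-cell dissipation), and control the boundary terms by $\|G_i\|_\infty$. The one place where your argument genuinely breaks is the convective sum, and it breaks at precisely the point you yourself flag as ``the hardest technical point.'' The absorption of $\frac{\eps}{\dx}(\pi_i(b)-\pi_i(a))^2$ into the dissipation cannot be carried out: the only dissipation available is $(\varphi_i(b)-\varphi_i(a))(\pi_i(b)-\pi_i(a))/\dx$, and your proposed route $(\pi_i(b)-\pi_i(a))^2\le L_\pi(b-a)(\pi_i(b)-\pi_i(a))$ would then require $(b-a)\le C\,(\varphi_i(b)-\varphi_i(a))$, which fails near $s=0$ and $s=1$ where $\varphi_i'=\l_i\pi_i'$ vanishes: taking $a=0$ and $b\to0$ with $\l_i(s)\sim s$, the quantity $(\pi_i(b)-\pi_i(a))^2$ is of order $b^2$ while the dissipative product is of order $b^3$, so no uniform constant exists and no choice of $\eps$ saves the step.

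The repair is already contained in the inequality you quote. For $a\le s\le b$ the monotonicity of $G_i$ gives $G_i(a,b)\le G_i(s,b)\le G_i(s,s)=f_i(s)$, and symmetrically for $b\le s\le a$ one gets $G_i(a,b)\ge f_i(s)$; hence the remainder $\int_a^b\bigl(G_i(a,b)-f_i(s)\bigr)\pi_i'(s)\,ds$ in your identity is always nonpositive, i.e.\ $\int_a^b\pi_i'(s)\bigl(f_i(s)-G_i(a,b)\bigr)\,ds\ge0$ whatever the order of $a$ and $b$. Since after the discrete summation by parts the convective contribution enters the balance as $-G_i(a,b)\bigl(\pi_i(b)-\pi_i(a)\bigr)$, this remainder carries the favorable sign and can simply be discarded, leaving only the telescoping $\Phi_i$ part, which is bounded by $2\max_i\|\Phi_i\|_\infty$. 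This is exactly what the paper does (its $\Psi_i(s)=\int_0^s\pi_i(\tau)f_i'(\tau)\,d\tau$ is your $\Phi_i$ up to an integration by parts): no Young inequality and no absorption are needed, and the constant then depends only on $\pi_i$, $\phi_i$, $T$, $G_i$ as claimed.
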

This estimate is the discrete analogue to: 
$$
\ds \sum_{i=1,2}  \int_0^T \int_{\O_i} |  \partial_x \xi_i(u)(x,t) |^2  dxdt \le C.
$$
In order to prove Proposition~\ref{prop_L2H1_d}, we will need the following technical lemma. 
To understand this lemma, first suppose that the total flow rate $q$ is $0$. Then, roughly speaking, it  claims that,  in the case where the capillary pressure is discontinuous at the interface, the discrete flux is oriented from the high capillary pressure to the low capillary pressure. Suppose now that $q\neq 0$.
In order to respect the conservation of mass, some fluid will have to go through the interface, but we keep a control on the energy.
\begin{lem}\label{monotony_d_lem1}
Let $(a,b)\in [0,1]^2$, and let $(c,d)\in [0,1]^2 $ be the unique solution to the system~(\ref{trans_d}), 
as stated in Lemma~\ref{lem_trans_d}, then the following inequality holds:
$$ (\pi_1(c)-\pi_2(d))\left(G_1(a,c) + \frac{\varphi_1(a)-\varphi_1(c)}{\dx/2}\right) =
(\pi_1(c)-\pi_2(d))\left(G_2(d,b) + \frac{\varphi_2(d)-\varphi_1(b)}{\dx/2}\right)\ge -|q| | \pi_1(c) - \pi_2(d) | .$$
\end{lem}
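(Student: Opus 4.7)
The plan is first to notice that the displayed equality in the lemma is simply the transmission equation of system \eqref{trans_d} rewritten with $\tfrac{2(\varphi_i(\cdot)-\varphi_i(\cdot))}{\dx} = \tfrac{\varphi_i(\cdot)-\varphi_i(\cdot)}{\dx/2}$; call this common value $F_0$. All the content of the lemma therefore lies in the inequality
\[
(\pi_1(c)-\pi_2(d))\,F_0 \;\ge\; -|q|\,|\pi_1(c)-\pi_2(d)|,
\]
which I shall split according to the sign of $\pi_1(c)-\pi_2(d)$. The case $\pi_1(c)=\pi_2(d)$ is vacuous, so it suffices to show that $F_0 \ge -|q|$ when $\pi_1(c)>\pi_2(d)$ and $F_0 \le |q|$ when $\pi_1(c)<\pi_2(d)$.

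The key combinatorial observation is that the graph condition \eqref{raccord_pression_d} forces a strict capillary-pressure jump only at a degenerate endpoint: by enumerating the positions of $c,d\in\{0\}\cup(0,1)\cup\{1\}$ one checks that the case $\pi_1(c)>\pi_2(d)$ can occur only when $c=0$ or $d=1$ (indeed, if both $c,d\in(0,1)$ then the intersection condition forces $\pi_1(c)=\pi_2(d)$, while all remaining boundary configurations are direct to check), and symmetrically $\pi_1(c)<\pi_2(d)$ only when $c=1$ or $d=0$. In each situation the corresponding endpoint values from Properties~\ref{prop_fonc}, namely $\varphi_i(0)=0$, $f_i(0)=0$, $f_i(1)=q$, together with the monotonicity of $G_i$ and $\varphi_i$, will produce the bound.

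Concretely, suppose $\pi_1(c)>\pi_2(d)$. If $c=0$, I use the $\O_1$ expression of $F_0$: since $\varphi_1(0)=0$ and $G_1(0,0)=f_1(0)=0$, the monotonicities yield
\[
F_0 = G_1(a,0) + \frac{2\varphi_1(a)}{\dx} \;\ge\; G_1(0,0) + 0 \;=\; 0 \;\ge\; -|q|.
\]
If instead $d=1$, I switch to the $\O_2$ expression, and using $G_2(1,1)=f_2(1)=q$,
\[
F_0 = G_2(1,b) + \frac{2\bigl(\varphi_2(1)-\varphi_2(b)\bigr)}{\dx} \;\ge\; G_2(1,1) \;=\; q \;\ge\; -|q|.
\]
The opposite sign is treated symmetrically: when $c=1$ the $\O_1$ expression gives $F_0 \le G_1(a,1) \le G_1(1,1) = q \le |q|$, and when $d=0$ the $\O_2$ expression gives $F_0 \le G_2(0,b) \le G_2(0,0) = f_2(0) = 0 \le |q|$. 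The main obstacle is really the combinatorial step: once one has carefully collected all the $(c,d)$-configurations compatible with the graph relation and identified which of them produce a capillary-pressure jump, the bounds on $F_0$ fall out immediately from Properties~\ref{prop_fonc} and the monotonicity of the numerical flux.
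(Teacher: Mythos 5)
Your proof is correct and follows essentially the same route as the paper: the graph condition forces any capillary-pressure jump to occur at a degenerate endpoint ($c\in\{0,1\}$ or $d\in\{0,1\}$), after which the endpoint values $f_i(0)=0$, $f_i(1)=q$, $\varphi_i(0)=0$ and the monotonicity of $G_i$ and $\varphi_i$ give the bound. The only difference is presentational: the paper reduces the case count by assuming $\pi_1(0)\ge\pi_2(0)$ and $\pi_1(1)\ge\pi_2(1)$ without loss of generality and bounds the product directly, whereas you enumerate all four sign-and-endpoint configurations and bound the common flux value $F_0$ itself.
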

\begin{proof}
In this proof, we suppose that $\pi_1(0) \ge \pi_2(0)$ and $\pi_1(1)\ge \pi_2(1)$, 
the other cases do not bring any other difficulties.
One has $\t\pi_1(c)\cap\t\pi_2(d)\neq\emptyset$, so there are three different cases:
\begin{itemize}
\item \underline{$\pi_1(c)=\pi_2(d)$:} in this case, one has directly:
$$ (\pi_1(c)-\pi_2(d))\left( G_1(a,c) + \frac{\varphi_1(a)-\varphi_1(c)}{\dx/2}\right) = 0. $$
\item  \underline{$\pi_2(d)<\pi_1(0)$:}  the relation $\t\pi_1(c)\cap\t\pi_2(d)\neq\emptyset$ 
ensures that $c=0$, thus it follows from the monotonicity of $\varphi_1$ and $G_1$ that $\varphi_1(a) \ge \varphi_1(0)=0$, and $G_1(a,0) \ge G_1(0,0)=f_1(0)=0$. This gives:
$$ (\pi_1(c)-\pi_2(d)) \left(G_1(a,c) + \frac{\varphi_1(a)-\varphi_1(c)}{\dx/2}\right) \ge 0.
$$
\item  \underline{$\pi_1(c)>\pi_2(1)$:} this implies $d=1$. From the monotonicity of $\varphi_2$ and $G_2$, we deduce that $\varphi_2(b) \le \varphi_2(1)$ and $G_2(1,b) \ge G_2(1,1)=q$. This yields 
$$ (\pi_1(c)-\pi_2(d))\left(G_2(d,b) + \frac{\varphi_2(d)-\varphi_1(b)}{\dx/2}\right) \ge q| \pi_1(c) - \pi_2(d) |.
$$
\end{itemize}
\end{proof}
\noindent
{\it Proof of Proposition \ref{prop_L2H1_d}.}
First check that the scheme~(\ref{schema_1}) can be rewritten 
$$
\frac{u_{j+1/2}^{n+1} - u_{j+1/2}^{n}}{\dt} \dx + \left( F_{j+1}^{n+1} - f_i(u_{j+1/2}^{n+1}) \right) - \left( F_{j}^{n+1} - f_i(u_{j+1/2}^{n+1}) \right) = 0.
$$
We multiply the previous equation by $\dt\pi_i(u_{j+1/2}^{n+1})$ and sum on $j=-N,N-1$. 
This leads to
\begin{equation}\label{L2H1_10}
A^{n+1} + B^{n+1} + C^{n+1} + D^{n+1} + E^{n+1}= 0,
\end{equation}
where
\begin{eqnarray*}
A^{n+1}&=&\ds \sum_{j=-N}^{N-1} \phi_i \pi_i( u_{j+1/2}^{n+1} ) \left( u_{j+1/2}^{n+1} - u_{j+1/2}^{n} \right)\dx \ ;\\ 
B^{n+1}&=& \ds \sum_{j\notin \{-N,0,N \} } \dt \left[\begin{array}{c}
\pi_i(u_{j-1/2}^{n+1}) \left( G_i(u_{j-1/2}^{n+1}, u_{j+1/2}^{n+1}) - G_i(u_{j-1/2}^{n+1}, u_{j-1/2}^{n+1}) \right) \\
- \pi_i(u_{j+1/2}^{n+1}) \left( G_i(u_{j-1/2}^{n+1}, u_{j+1/2}^{n+1}) - G_i(u_{j+1/2}^{n+1}, u_{j+1/2}^{n+1})  \right) 
\end{array}\right] ;\\
C^{n+1} &=& \dt \sum_{j\notin \{-N,0,N \}} \left( \pi_i(u_{j+1/2}^{n+1}) - \pi_i(u_{j-1/2}^{n+1}) \right) \frac{  \phii(u_{j+1/2}^{n+1}) - \phii(u_{j-1/2}^{n+1})   }{\dx}\  ; \\
D^{n+1}& = &  \dt F_0^{n+1} \left( \pi_1(u_{-1/2}^{n+1}) - \pi_2(u_{1/2}^{n+1}) \right) {\dx}
  - \dt   \pi_1(u_{-1/2}^{n+1}) f_1(u_{-1/2}^{n+1})     + \dt    \pi_2(u_{1/2}^{n+1}) f_2(u_{1/2}^{n+1})      \ ;\\
E^{n+1} & = & \dt \pi_1(u_{-N+1/2}^{n+1}) \left(G_1\left(\uu^{n+1},u_{-N+1/2}^{n+1}\right) - G_1\left(u_{-N+1/2}^{n+1},u_{-N+1/2}^{n+1} \right) \right) \\
&& +  \dt  \pi_2(u_{N-1/2}^{n+1}) \left( G_2\left(u_{N-1/2}^{n+1},\ou^{n+1}\right) - G_2\left(u_{N-1/2}^{n+1}, u_{N-1/2}^{n+1}\right)  \right)\ .
\end{eqnarray*}
Denoting by $L_G$ a Lipschitz constant of both $G_i$, 
\begin{equation}\label{L2H1_E}
E^{n+1} \ge - \dt L_G \left(\|\pi_1\|_\infty + \| \pi_2 \|_\infty \right) \ .
\end{equation}
One has
\begin{eqnarray*}
F_0^{n+1} \left(\pi_1(u_{-1/2}^{n+1}) - \pi_2(u_{1/2}^{n+1}) \right) =&  & 
F_0^{n+1} \left(\pi_1(u_{-1/2}^{n+1}) - \pi_1(u_{0,1}^{n+1}) \right) 
+\ds  F_0^{n+1} \left(\pi_1(u_{0,1}^{n+1}) - \pi_2(u_{0,2}^{n+1}) \right) \\
&+& F_0^{n+1} \left(\pi_2(u_{0,2}^{n+1}) - \pi_2(u_{1/2}^{n+1}) \right).
\end{eqnarray*}
It has been proven in Lemma \ref{monotony_d_lem1} that there exists $C_1$ depending only on $q$ and $\pi_i$ 
such that 
\begin{equation}\label{L2H1_01}
 F_0^{n+1} \left(\pi_1(u_{0,1}^{n+1}) - \pi_2(u_{0,2}^{n+1}) \right) \ge C_1.
\end{equation}
Using the definition of $F_0^{n+1}$, it is then easy to check that there exists $C_2$ only depending on $G_i$, $q$, $\pi_i$, 
\begin{eqnarray}
D^{n+1} & \ge & 
\dt C_2 +\dt  \left(\pi_1(u_{-1/2}^{n+1}) - \pi_1(u_{0,1}^{n+1}) \right) \frac{\varphi_1(u_{-1/2}^{n+1}) - \varphi_1(u_{0,1}^{n+1})}{\dx/2} \nn\\ 
&&+ \dt \left(\pi_2(u_{1/2}^{n+1}) - \pi_2(u_{0,2}^{n+1}) \right) \frac{\varphi_2(u_{1/2}^{n+1}) - \varphi_2(u_{0,2}^{n+1})}{\dx/2}. \nn
\end{eqnarray}
Since $\pi_i$ is a non-decreasing function, $\ds \mathcal{G}_i:s\mapsto \int_0^s \phi_i\pi_i(a)da$ is convex, then: $\forall n \in \llbrack 0,M-1 \rrbrack$,
\begin{equation}\label{L2H1_A}
A^{n+1} \ge  \sum_{j=-N}^{N-1}   \left(  \mathcal{G}_i(u_{j+1/2}^{n+1} ) -  \mathcal{G}_i(u_{j+1/2}^{n} )  \right) \dx.
\end{equation}
We denote by $\ds \Psi_i(s) = \int_0^s \pi_i(\tau) f_i'(\tau) d\tau$, then an integration by parts  leads to
$$
\Psi_i(b) - \Psi_i(a) = \pi_i(a) (G_i(a,b) - f_i(a)) -  \pi_i(b) \left( G_i(a,b) - f_i(b) \right)  - \int_a^b \pi_i'(s) (f_i(s)-G_i(a,b)) ds.
$$
Since $f_i(s) = G_i(s,s)$, it follows from the monotonicity of $G_i$ and $\pi_i$ that 
$$
 \int_a^b \pi_i'(s) (f_i(s)-G_i(a,b)) ds \ge 0.
$$
Thus 
\begin{eqnarray*}
B^{n+1} & \ge & \dt \sum_{j\notin\{-N,0,N\}} \left( \Psi_i(u_{j+1/2}^{n+1}) - \Psi_i(u_{j-1/2}^{n+1})  \right) \\
& \ge & \dt \left(\Psi_1(u_{-1/2}^{n+1}) - \Psi_1(u_{-N+1/2}^{n+1}) + \Psi_2(u_{N-1/2}^{n+1}) -  \Psi_2(u_{1/2}^{n+1})\right).
\end{eqnarray*}
So there exists $C_3$, only depending on $\pi_i$, $f_i$ such that
\begin{equation}\label{L2H1_B}
B^{n+1} \ge \dt C_3.
\end{equation}
Let $\ds \xi_i:s\mapsto \int_0^s\sqrt{\l_i(a)}\pi_i'(a)da$, Cauchy-Schwarz inequality yields: $\forall (a,b)\in [0,1]^2$, 
$$(\pi_i(a)-\pi_i(b))(\phii(a)-\phii(b)) \ge (\xi_i(a)-\xi_i(b))^2.$$
This ensures that 
\begin{eqnarray}
C^{n+1} &\ge & \dt \sum_{j\notin\{-N,0,N\}} \frac{\left(\xi_i(u_{j+1/2}^{n+1}) -  \xi_i(u_{j-1/2}^{n+1})  \right)^2}{\dx} ;\label{L2H1_C} \\
D^{n+1} & \ge & \dt C_2 + \dt  \frac{\left(\xi_1(u_{0,1}^{n+1}) -  \xi_1(u_{-1/2}^{n+1})  \right)^2}{\dx/2} + 
 \dt  \frac{\left(\xi_2(u_{1/2}^{n+1}) -  \xi_2(u_{0,2}^{n+1})  \right)^2}{\dx/2}. \label{L2H1_D}
\end{eqnarray}
Summing \eqref{L2H1_10} on $n\in \llbrack 0, M-1 \rrbrack$, and taking into account \eqref{L2H1_E}, \eqref{L2H1_A}, \eqref{L2H1_B}, \eqref{L2H1_C}, \eqref{L2H1_D},  provides the 
existence of a quantity $C$, depending only on $T$, $\pi_i$, $G_i$, $\phi_i$ such that
\begin{equation}\label{L2H1_fin}
\sum_{i=1,2} \left| \xi_i (u_\Dd)  \right|^2_{1,\Dd,i} + \sum_{n=0}^{M-1} \dt \left(\frac{\left(\xi_1(u_{0,1}^{n+1}) -  \xi_1(u_{-1/2}^{n+1})  \right)^2}{\dx/2} +
 \frac{\left(\xi_2(u_{1/2}^{n+1}) -  \xi_2(u_{0,2}^{n+1})  \right)^2}{\dx/2}   \right) \le C.
\end{equation}
\hfill $\square$
\begin{rmk}\label{conv_trace_rmk} 
The estimate \eqref{L2H1_fin} is stronger than the one stated in Proposition \ref{prop_L2H1_d}, since it lets also appear some 
contributions coming from the interface. They will be useful in the sequel. Indeed, if we denote by $u_{\Dd,i}$ the trace of $\left(u_\Dd \right)_{|\O_i}$ on the 
interface $\{ x = 0\}$, and if we denote by $\gamma_{\Dd,i}(t) = u_{0,i}^{n+1}$ if $t\in (n\dt , (n+1)\dt]$, then it follows from \eqref{L2H1_fin} that
$$
\lim_{\dt,\dx\to 0} \left\| u_{\Dd,i} - \gamma_{\Dd,i}  \right\|_{L^p(0,T)} = 0, \qquad \forall p\in [1,\infty).
$$
Suppose that $u_{\Dd_i}$ converges in $L^p(0,T)$ towards a function $u_i$, as it will be proven later. Then, we directly obtain that $\gamma_{\Dd,i}$ also converges towards $u_i$. Moreover, for all $t>0$, $\t\pi_1(\gamma_{Dd,1}(t))\cap\t\pi_2(\gamma_{Dd,2}(t)) \neq \emptyset$. Since 
$$
F =  \{ (a,b)\in [0,1]^2\ | \ \t\pi_1(a) \cap \t\pi_2(b) \neq \emptyset \} \textrm{ is a closed set of } [0,1]^2,
$$
we can claim that $\t\pi_1(u_1) \cap \t \pi_2(u_2) \neq \emptyset$ a.e. in $(0,T)$. 
 \end{rmk}

\subsection{Compactness of a family of approximate solutions}\label{compactness} 
Let $(M_p)_{p\in \N},(N_p)_{p\in \N}$ be two sequences of positive integers tending to $+\infty$. 
We denote $\Dd_p$ the discretization of $\OT$ associated to $M_p$, and $N_p$.
The $L^\infty$-estimate stated in Proposition~\ref{existence_unicite_d} shows that there exists 
$u\in L^\infty(\OT)$, $0\le u \le 1$, such that, up to 
a subsequence, $u_{\Dd_p}\rightarrow u$ in the $L^\infty(\OT)$ weak-$\star$ sense as $p\rightarrow +\infty$.

We just need to prove that $u_{\Dd_p}\rightarrow u$ almost everywhere in $\OT$ 
to get the convergence of $(u_{\Dd_p})$ towards $u$ in 
$L^r(\OT)$ for any $1\le r < +\infty$. To apply Kolmogorov criterion (see e.g. \cite{Bre83}) 
we need some estimates on the space and time translates of $\xi_i(u_\Dd)$.
 \begin{lem}[space and time translates estimates]\label{translates}
 For all $\eta\in \R$, for $i=1,2$, one denotes $\O_{i,\eta}= \{x \in \O_i \ / \ (x+\eta) \in \O_i \}$, 
then the following estimate holds:  
  \begin{equation}\label{space_trans_1}
  \| \xi_i( u_\Dd)(\cdot+\eta,\cdot) -\xi_i( u_\Dd)(\cdot,\cdot) \|_{L^2(\O_{i,\eta}\times(0,T))} \le  
| \xi_i(u_\Dd) |_{1,\Dd,i} |\eta |(|\eta| + 2 \dx).   
  \end{equation}
One denotes $w_{i,\Dd}$ the function defined almost everywhere by:
 $$
 w_{i,\Dd}(x,t)=\left\{\begin{array}{lcl}
 \xi_i(u_\Dd)(x,t)& \text{ in }& \OiT, \\
 0 & \text{ in }& \R^2\backslash (\OiT).
 \end{array}
 \right.
 $$
 There exists $C_1$ depending only on $\pi_i,\phi_i,T,G_i$ and $C_2$ only depending on 
  $\pi_i,\phi_i,T, \l_i,G_i$ such that:
  \begin{equation}\label{space_trans_2}
  \forall \eta\in \R, \qquad \qquad \| w_{i,\Dd}(\cdot+\eta,\cdot) - 
  w_{i,\Dd}(\cdot,\cdot) \|_{L^2(\R^2)} \le  C_1 \eta,   
\end{equation}
\begin{equation}\label{time_trans}
 \forall \tau\in (0,T), \qquad \| w_{i,\Dd}(\cdot,\cdot+\tau) 
-w_{i,\Dd}(\cdot,\cdot) \|_{L^2(\O_{i}\times(0,T-\tau))} \le  C_2 \tau.   
  \end{equation}
 \end{lem}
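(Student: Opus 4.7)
My plan is to establish the three estimates in the order stated, following the general strategy developed in \cite{EGH00} for monotone finite volume schemes, with special care for the interface $\{x=0\}$.

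For the interior space translate \eqref{space_trans_1}, the starting point is to write, for fixed $t\in (t^n,t^{n+1}]$ and $x,x+\eta\in \Omega_i$, the difference $\xi_i(u_\Dd)(x+\eta,t)-\xi_i(u_\Dd)(x,t)$ as a telescoping sum of the jumps $\xi_i(u_{k+1/2}^{n+1})-\xi_i(u_{k-1/2}^{n+1})$ across the cell interfaces $x_{k-1/2}$ lying between $x$ and $x+\eta$. Introducing indicator weights $\chi_k(x,\eta)\in\{0,1\}$ marking those intervening faces and using Cauchy--Schwarz, I would estimate the square of this telescoping sum by $(\sum_k \chi_k(x,\eta))\sum_k \chi_k(x,\eta)(\xi_i(u_{k+1/2}^{n+1})-\xi_i(u_{k-1/2}^{n+1}))^2$. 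The first factor is bounded by $|\eta|/\dx+1$ uniformly in $x$, while integrating the second factor against $dx$ and using Fubini yields $\int_{\Omega_{i,\eta}}\chi_k(x,\eta)\,dx \le |\eta|$. Summing on $n$ with weight $\dt$ then reconstructs $|\xi_i(u_\Dd)|_{1,\Dd,i}^2$ and leads to \eqref{space_trans_1}.

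The extension \eqref{space_trans_2} to $\R^2$ only requires controlling a strip of width $|\eta|$ near $\partial\Omega_i$ where $w_{i,\Dd}$ is extended by $0$; since $\xi_i(u_\Dd)$ is uniformly bounded in $L^\infty$ thanks to Proposition~\ref{existence_unicite_d} and the Lipschitz character of $\xi_i$, this boundary contribution is $O(|\eta|)$ and combines with \eqref{space_trans_1} to give the linear bound $C_1|\eta|$.

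The time translate \eqref{time_trans} is the main obstacle. The standard trick is to use the Lipschitz constant $L_i$ of $\xi_i$ to bound $(\xi_i(a)-\xi_i(b))^2$ by $L_i(\xi_i(a)-\xi_i(b))(a-b)$, reducing the proof to controlling
$$A(\tau):=\sum_{i=1,2}\int_0^{T-\tau}\!\!\int_{\Omega_i}(\xi_i(u_\Dd(x,t+\tau))-\xi_i(u_\Dd(x,t)))(u_\Dd(x,t+\tau)-u_\Dd(x,t))\phi_i\,dx\,dt.$$
The difference $u_\Dd(x,t+\tau)-u_\Dd(x,t)$ is written as a telescoping sum $\sum_{m}(u_{j+1/2}^{m+1}-u_{j+1/2}^{m})$ over the time steps lying between $t$ and $t+\tau$, so that the scheme \eqref{schema_1} replaces each increment by the flux difference $-(F_{j+1}^{m+1}-F_j^{m+1})\dt/\dx$. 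A discrete integration by parts in space then transfers the spatial difference onto $\xi_i(u_\Dd)$, producing a sum of interior products $(F_j^{m+1})(\xi_i(u_{j+1/2}^{m+1})-\xi_i(u_{j-1/2}^{m+1}))$ plus boundary and interface contributions.

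The interior terms are controlled by Cauchy--Schwarz using the discrete $L^2((0,T);H^1(\Omega_i))$ estimate of Proposition~\ref{prop_L2H1_d} together with the uniform $L^\infty$ bound on the numerical fluxes $F_j^{m+1}$ (which follows from the $L^\infty$ bound on $u_\Dd$, the Lipschitz continuity of $G_i$, and the uniform bound on $(\phii(u_{j+1/2}^{m+1})-\phii(u_{j-1/2}^{m+1}))/\dx$ obtained by gathering at most $|\eta|/\dx+1$ jumps via the enhanced estimate). The boundary contributions at $x=\pm 1$ are handled through Remark~\ref{FN_borne}, which provides the uniform bound on $F_{\pm N}^{m+1}$. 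The interface contribution at $\{x=0\}$ is the delicate point: it involves the differences $\xi_1(u_{0,1}^{m+1})-\xi_1(u_{-1/2}^{m+1})$ and $\xi_2(u_{1/2}^{m+1})-\xi_2(u_{0,2}^{m+1})$, which are exactly the interface terms appearing in the enhanced estimate \eqref{L2H1_fin}, and for which the flux $F_0^{m+1}$ is also uniformly bounded (as shown in the proof of Proposition~\ref{prop_L2H1_d}). Summing all contributions produces a bound of the form $C\tau$, which yields \eqref{time_trans} after dividing by $L_i$.
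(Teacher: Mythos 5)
Your strategy for \eqref{space_trans_1} and \eqref{space_trans_2} is the standard one from \cite{EGH00}, which is all the paper itself does (it simply declares the lemma to be a compilation of Lemmata 4.2, 4.3 and 4.6 of that book). The telescoping-plus-Cauchy--Schwarz computation for the interior space translates is correct. One consistency remark: all three estimates are really bounds on the \emph{squared} $L^2$ norms (the boundary strip of width $|\eta|$ contributes $O(|\eta|)$ to the squared norm, hence only $O(|\eta|^{1/2})$ to the norm itself, and likewise the time-translate argument produces $C\tau$ for the squared norm); your phrase ``linear bound $C_1|\eta|$'' conflates the two, though the paper's own statement suffers from the same imprecision.

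The genuine gap is in the time-translate argument. You control the interior products $F_j^{m+1}\bigl(\xi_i(u_{j+1/2}^{\cdot})-\xi_i(u_{j-1/2}^{\cdot})\bigr)$ by invoking a ``uniform $L^\infty$ bound on the numerical fluxes $F_j^{m+1}$''. No such bound is available here: $F_j^{m+1}$ contains the discrete derivative $(\phii(u_{j+1/2}^{m+1})-\phii(u_{j-1/2}^{m+1}))/\dx$, which Proposition~\ref{prop_L2H1_d} controls only in a discrete $L^2$ sense. A uniform bound on the fluxes is exactly the content of Proposition~\ref{unif_flux}, and it requires the additional regularity Assumptions~\ref{hyps_bounded} on $u_0$, which are not in force in the compactness section (arbitrary $u_0\in L^\infty(\O)$ with $0\le u_0\le 1$). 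The justification you offer --- ``gathering at most $|\eta|/\dx+1$ jumps via the enhanced estimate'' --- cannot yield a pointwise bound: the enhanced estimate \eqref{L2H1_fin} is a summed quadratic estimate, and $\eta$ plays no role in the time translates. The repair is standard and stays within your outline: after the discrete integration by parts, apply Young's inequality to each product and bound $\sum_n\dt\sum_j\dx\,(F_j^{n+1})^2$ by combining the $L^\infty$ bound on the convective part $G_i$ with the discrete $L^2((0,T);H^1(\O_i))$ bound on the diffusive part, which follows from Proposition~\ref{prop_L2H1_d} because $\phii\circ\xi_i^{-1}$ is Lipschitz; the companion factor $\sum_j(\xi_i(u_{j+1/2}^{\cdot})-\xi_i(u_{j-1/2}^{\cdot}))^2/\dx$ is bounded by the same proposition, and the factor $\tau$ comes from $\int_0^{T-\tau}\chi_{(t,t+\tau]}(t^m)\,dt\le\tau$. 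With that substitution the argument closes.
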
 
The previous lemma is in fact a compilation of Lemmata 4.2, 4.3 
and 4.6 of \cite{EGH00} adapted to our framework.
The estimates~(\ref{space_trans_2}) and (\ref{time_trans}) 
allows us to use the  Kolmogorov compactness criterion on
the sequence $(w_{i,\Dd_p})_{p\in \N}$, and thus, 
there exists $w_i\in L^2(\OiT)$ such that for almost every $(x,t)\in (\OiT)$, 
$\xi_i(u_{\Dd_p})(x,t) \rightarrow w_i(x,t)$, and then thanks to the 
$L^\infty$-estimate $0\le u_{\Dd_p}(x,t)\le 1$,
one can claim that $\xi_i(u_{\Dd_p}) \rightarrow w_i $ in $L^r(\OiT)$, for all 
$r\in [1,+\infty[$. Letting $p$ tend to $+\infty$ in 
(\ref{space_trans_1}) insures that $w_i$ belongs to $L^2(0,T;H^1(\O_i))$. 
Since $\xi_i^{-1}$ is a continuous function, we can identify the limit:
$$
w_i = \xi_i(u).
$$
Thus $\xi_i(u) \in L^2(0,T;H^1(\O_i))$, and since $\phii\circ\xi_i^{-1}$ is a Lipschitz function,  there exists 
$C$ depending only on $T,\pi_i,\phi_i, G_i,\lambda_i$ such that:
\begin{equation}\label{phii(u)}
\|\phii(u)\|_{L^2(0,T;H^1(\O_i))} \le C, 
\end{equation}
and that $\xi_i(u_{\Dd_p}) \rightarrow \xi_i(u)$, up to a subsequence, in 
$L^r(\OiT)$ as $p \rightarrow +\infty$ for any $r\in [1,+\infty)$. Since $\xi_i$, $i=1,2$, is an increasing function, one can claim that 
$u_{\Dd_p}$ converges a.e. in $\OT$ towards $u$, and then:
\begin{eqnarray}
u_{\Dd_p} \rightarrow u  &&\text{ in the } L^\infty(\OT)\text{-weak-}\star \text{ sense,}\label{result_conv_1}\\
u_{\Dd_p} \rightarrow u  &&\text{ in } L^r(\OiT), \ \forall r\in [1,+\infty[.\label{result_conv_2}
\end{eqnarray}

Roughly speaking, the approximation $u_\Dd$ obtained via a monotonous finite volume scheme, which introduces numerical diffusion,  is ``close" to the approximation $u^\epsilon$ obtained by adding additional diffusion $-\epsilon \Delta u^\epsilon$ to the problem. For such a continuous problem, 
we would have an estimate of type 
$$
\int_0^T \int_{\O_i} \left( \partial_x \xi_i(u^\epsilon) \right) dxdt \le C',
$$
which would lead to the relative compactness of the family $\left( \xi_i(\ue) \right)_{\epsilon>0}$ in $L^2(\OiT)$. Then the family 
$\left( \xi_i(\ue) \right)_{\epsilon>0}$ is also relatively compact in $L^2((0,T);H^s(\O_i))$ for all $s\in(1/2,1)$. This ensures that, up to a subsequence, 
the traces on the boundary and on the interface of $(\xi_i(\ue))$ converge in $L^2(0,T)$. The continuity of $\xi_i^{-1}$, and the $L^\infty$-estimate 
ensure that the traces of $\ue$ on the boundary and on the interface converge in $L^r(0,T)$, for all $r\in [1,\infty)$.

This sketch has to be modified in order to deal with discrete solutions, which do not belong to $L^2((0,T);H^s(\O_i))$ for $s>1/2$. Nevertheless, 
a convenient estimate on the translates at the boundary, based on the discrete $L^2((0,T);H^1(\O_i))$ estimate stated in Proposition \ref{prop_L2H1_d}, leads 
to the following convergence result, which is proven in the multidimensional case in \cite[Proposition 3.10]{Berlin}.
\begin{lem}\label{conv_trace}
Let $i=1,2$, and let $\alpha\in\partial\O_i$. We denote by  ${u}_{\alpha,\Dd_p,i}$ the trace of $(u_{\Dd_p})_{|\O_i}$ on $\{ x=\alpha \}$.
Then, one has: for all $r\in [1, \infty)$,
$${u}_{\alpha,\Dd_p,i} \rightarrow u_{|\O_i}(\alpha,\cdot) \text{ in } L^r(0,T) \text{ as }Êp\rightarrow +\infty.$$
\end{lem}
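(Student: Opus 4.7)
The plan is to first prove that the trace of $\xi_i(u_{\Dd_p})$ on $\{x=\alpha\}$ converges in $L^2(0,T)$ to the trace of $\xi_i(u)$, and then to transfer this to $u_{\Dd_p}$ itself using the continuity of $\xi_i^{-1}$ together with the uniform bound $0\le u_{\Dd_p}\le 1$. The four cases $\alpha\in\{-1,0,1\}$ (the interface being counted from each side) all reduce to the same one-sided computation, so I detail only $\alpha=-1$, $i=1$.

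The key discrete ingredient replaces the continuous $L^2(0,T;H^s(\O_1))$-regularity with $s>1/2$ alluded to in the sketch preceding the lemma. Telescoping between the boundary cell $-N+1/2$ and the cell $-N+k+1/2$, then applying Cauchy--Schwarz, gives for every $1\le k\le N-1$
$$\sum_{n=0}^{M-1}\dt\,\bigl|\xi_1(u^{n+1}_{-N+1/2})-\xi_1(u^{n+1}_{-N+k+1/2})\bigr|^2\;\le\; k\,\dx\,|\xi_1(u_{\Dd_p})|_{1,\Dd_p,1}^2,$$
since every jump on the right-hand side is indexed by some $j\in J_{\text{int},1}$. Invoking Proposition~\ref{prop_L2H1_d} and integrating in $x$ over $(-1,-1+h)$ with $h=k\,\dx$ yields
$$\int_0^T\!\!\int_{-1}^{-1+h}\bigl|\xi_1(u_{-1,\Dd_p,1})(t)-\xi_1(u_{\Dd_p})(x,t)\bigr|^2\,dx\,dt\;\le\;C\,h^2,$$
with $C$ independent of $p$ and of $h$. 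Setting $\phi_p(t):=\xi_1(u_{-1,\Dd_p,1})(t)$, $A_p^h(t):=\frac{1}{h}\int_{-1}^{-1+h}\xi_1(u_{\Dd_p})(x,t)\,dx$ and $A^h(t):=\frac{1}{h}\int_{-1}^{-1+h}\xi_1(u)(x,t)\,dx$, Jensen's inequality promotes the displayed bound to $\|\phi_p-A_p^h\|_{L^2(0,T)}^2\le C\,h$, uniformly in $p$.

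It then suffices to assemble three ingredients: the bound just derived, the strong convergence $\xi_1(u_{\Dd_p})\to\xi_1(u)$ in $L^2(\O_1\times(0,T))$ from Section~\ref{compactness} (which gives $A_p^h\to A^h$ in $L^2(0,T)$ for each fixed $h$), and the trace-from-averages property $A^h\to \xi_1(u)(-1,\cdot)$ in $L^2(0,T)$ as $h\to 0$, which is a classical consequence of $\xi_1(u)\in L^2(0,T;H^1(\O_1))$ (see the lines preceding~\eqref{phii(u)}) and in fact comes with an $O(\sqrt{h})$ rate matching the discrete one. A triangle inequality, with first $p\to\infty$ at fixed $h$ and then $h\to 0$, delivers $\phi_p\to\xi_1(u)(-1,\cdot)$ in $L^2(0,T)$. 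Extracting an a.e.\ converging subsequence and using the continuity of $\xi_1^{-1}$ on $[0,\|\xi_1\|_\infty]$ transports this to $u_{-1,\Dd_p,1}\to u(-1,\cdot)$ a.e.; the uniform bound $0\le u_{-1,\Dd_p,1}\le 1$ then upgrades the convergence to $L^r(0,T)$ for every $r\in[1,\infty)$ via dominated convergence, while a standard subsequence-of-subsequence argument removes the extraction. The main obstacle is the discrete boundary-translate estimate of the second paragraph; once it is secured, the three-piece triangle inequality and the passage from $\xi_1$ to $u$ are routine. Neither the heterogeneity nor the transmission conditions interfere, since every step is carried out strictly on one side of the relevant boundary point.
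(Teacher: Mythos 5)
Your argument is correct, and it is exactly the route the paper itself indicates: the paper does not write out a proof but appeals to a boundary-translate estimate derived from the discrete $L^2((0,T);H^1(\O_i))$ bound of Proposition~\ref{prop_L2H1_d} (deferring details to the cited multidimensional reference), which is precisely the telescoping/Cauchy--Schwarz estimate you establish and then exploit via the three-term triangle inequality. The only cosmetic point is that $h$ should be taken as a multiple of $\dx_p$ (or adjusted by an $O(\dx_p)$ term) before letting $p\to\infty$ at fixed $h$, a routine modification.
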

If we denote by $u_i(t) = u_{|\O_i}(0,t)$, it follows from the remark \ref{conv_trace_rmk} that 
$\t\pi_1(u_1) \cap \t\pi_2(u_2) \neq \emptyset$ a.e. in $(0,T).$ We can summarize all the results of this subsection in the following proposition :
\begin{prop}\label{compac_prop}
Let $\left(M_p\right)_p$, $\left(N_p\right)_p$ tend to $\infty$ as $p\to \infty$, and let $\left(\Dd_p\right)_p$ be the corresponding sequence of  discretizations 
of $\OT$. Let $\left(u_{\Dd_p}\right)_p$ be the sequence of corresponding discrete solutions to the scheme, then, up to a subsequence 
(still denoted by $\left( u_{\Dd_p} \right)_p$), there exists $u\in L^\infty(\OT)$, $0\le u \le 1$ a.e., with $\xi_i(u)\in L^2((0,T);H^1(\O_i))$ ($i=1,2$) such that:
$$
u_{\Dd_p} \to u \qquad \textrm{ a.e. in } \OT \qquad \textrm{ as } p\to \infty.
$$
Moreover, keeping the notations of Lemma \ref{conv_trace},
$$\begin{array}{rclccc}
 \ds u_{-1,\Dd_p,1}(t) &\rightarrow &u(-1,t) & \textrm{for a.e. } t\in (0,T) &\textrm{ as } p\to\infty, &\\
  \ds u_{1,\Dd_p,2}(t)& \rightarrow &u(1,t) & \textrm{for a.e. } t\in (0,T) &\textrm{ as } p\to\infty, &\\
  \ds  u_{0,\Dd_p,i}(t)& \rightarrow & u_{| \O_i} (0,t) = u_i(t) & \textrm{for a.e. } t\in (0,T) &\textrm{ as } p\to\infty, & i=1,2,\\
\end{array}
$$
and $\t\pi_1(u_1) \cap \t\pi_2(u_2) \neq \emptyset$ almost everywhere in $(0,T)$.
\end{prop}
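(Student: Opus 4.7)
The plan is to assemble the estimates already developed in this subsection; no genuinely new ingredient is needed, and the work splits along the four assertions of the statement.

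First, I would extract the candidate limit $u$ and upgrade weak-$\star$ compactness to almost-everywhere compactness. The uniform bound $0\le u_{\Dd_p}\le 1$ from Proposition \ref{existence_unicite_d} gives, via Banach--Alaoglu, a subsequence converging weak-$\star$ in $L^\infty(\OT)$ to some $u$ with $0\le u\le 1$. To promote this to a.e.\ convergence, I would apply Kolmogorov's compactness criterion to each extended sequence $(w_{i,\Dd_p})_p$ introduced in Lemma \ref{translates}, using the translate estimates (\ref{space_trans_2}) and (\ref{time_trans}). This yields strong convergence of $\xi_i(u_{\Dd_p})$ in $L^r(\OiT)$ for every $r\in[1,\infty)$. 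Because $\xi_i$ is strictly increasing and continuous, its inverse is continuous, so $u_{\Dd_p}$ converges a.e.\ to a function which, by uniqueness of weak-$\star$ limits, coincides with $u$. Passing to the liminf in (\ref{space_trans_1}) together with the uniform discrete $H^1$ bound of Proposition \ref{prop_L2H1_d} then gives $\xi_i(u)\in L^2(0,T;H^1(\O_i))$.

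Second, the three trace convergences (at $x=\pm 1$ and at $x=0$ from either side $\O_i$) follow by four direct applications of Lemma \ref{conv_trace}, one for each trace.

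Third, for the interface condition $\t\pi_1(u_1)\cap\t\pi_2(u_2)\neq\emptyset$ a.e.\ in $(0,T)$, I would follow the argument sketched in Remark \ref{conv_trace_rmk}: the discrete inner-cell traces $u_{0,\Dd_p,i}$ differ from the auxiliary interface reconstructions $\gamma_{\Dd_p,i}$ by a quantity controlled in $L^r(0,T)$ by the boundary terms appearing in (\ref{L2H1_fin}), which tend to zero with the mesh size. Hence $\gamma_{\Dd_p,i}\to u_i$ as well. Since each pair $(\gamma_{\Dd_p,1}(t),\gamma_{\Dd_p,2}(t))$ lies by construction (\ref{raccord_pression_d}) in the set $F=\{(a,b)\in[0,1]^2 \ | \ \t\pi_1(a)\cap\t\pi_2(b)\neq\emptyset\}$, and $F$ is closed in $[0,1]^2$, the limit $(u_1(t),u_2(t))$ also lies in $F$ for a.e.\ $t$.

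The only delicate point, and hence what I expect to be the main obstacle, is the last step: one must not confuse the cell-trace values $u_{0,\Dd_p,i}$, which satisfy no discrete capillary transmission, with the auxiliary unknowns $\gamma_{\Dd_p,i}$, which do. The bridge between them is precisely the second sum in (\ref{L2H1_fin}), so the strengthened form of Proposition \ref{prop_L2H1_d} plays a double role: it simultaneously gives the $H^1$-regularity of the limit and the consistency of the interface graph relation in the limit.
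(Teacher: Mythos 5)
Your proposal is correct and follows essentially the same route as the paper: weak-$\star$ compactness from the $L^\infty$ bound, Kolmogorov's criterion applied to $w_{i,\Dd_p}$ via the translate estimates of Lemma \ref{translates}, identification of the limit through the continuity of $\xi_i^{-1}$, the trace convergences from Lemma \ref{conv_trace}, and the interface graph relation obtained exactly as in Remark \ref{conv_trace_rmk} by comparing $u_{0,\Dd_p,i}$ with $\gamma_{\Dd_p,i}$ through the interface terms of \eqref{L2H1_fin} and the closedness of $F$. You have also correctly isolated the one genuinely delicate point, namely that the cell traces and the interface unknowns must be reconciled before passing to the limit in the transmission condition.
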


\subsection{Convergence of the scheme}
We will now achieve the proof of the following result.
\begin{thm}\label{conv_weak_thm}
Let $(M_p)_{p\in \N},(N_p)_{p\in \N}$ be two sequences of positive integers tending to $+\infty$, and 
$(\Dd_p)_{p\in\N}$ the associated sequence of discretizations of $\OT$. Then,
up to a subsequence, the sequence $\left(u_{\Dd_p}\right)_p $ of the discrete solutions  converges in $L^r(\OT)$ for all $r\in [1,\infty)$ to a weak solution 
to the problem \eqref{P} in the sense of Definition \ref{weak_sol_def}.
\end{thm}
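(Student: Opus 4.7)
My plan is to use Proposition \ref{compac_prop} as the compactness engine and then simply pass to the limit in a discrete weak formulation obtained from the scheme. Let $\left(u_{\Dd_p}\right)_p$ be the subsequence provided by Proposition \ref{compac_prop}; it converges a.e. to some $u\in L^\infty(\OT)$, $0\le u\le 1$, with $\xi_i(u)\in L^2(0,T;H^1(\O_i))$ (hence $\phii(u)\in L^2(0,T;H^1(\O_i))$ since $\phii\circ \xi_i^{-1}$ is Lipschitz), and with $\t\pi_1(u_1)\cap\t\pi_2(u_2)\neq \emptyset$ a.e.\ on $(0,T)$. This already yields items 1, 2 and 3 of Definition~\ref{weak_sol_def}, so the only substantive task is to establish the weak formulation~\eqref{weak_formulation}.

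Fix $\psi\in\Dd(\overline\O\times[0,T))$ and set $\psi_{j+1/2}^n=\psi(x_{j+1/2},t^n)$. I multiply scheme \eqref{schema_1} by $\dt\,\psi_{j+1/2}^n$, sum over $j\in\llbrack -N,N-1\rrbrack$ and $n\in\llbrack 0,M-1\rrbrack$, and perform Abel summation in both variables. The accumulation term contributes, after the discrete time integration by parts, $-\sum_{i}\int_{\O_i}\phi_i u_{0,\Dd}\psi(\cdot,0)\,dx -\sum_i \int_0^T\!\int_{\O_i}\phi_i u_\Dd\,\partial_t\psi_\Dd\,dx\,dt$, which converges to $-\sum_i\int_{\O_i}\phi_i u_0\psi(\cdot,0)\,dx -\sum_i\int_0^T\!\int_{\O_i}\phi_i u\,\partial_t\psi\,dx\,dt$ by the strong $L^r(\OT)$ convergence of $u_{\Dd_p}$ and the smoothness of $\psi$. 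Discrete Abel summation in space produces boundary contributions $\sum_n\dt(F_N^{n+1}\psi_{N-1/2}^n - F_{-N}^{n+1}\psi_{-N+1/2}^n)$, an interface contribution $\sum_n\dt\,F_0^{n+1}(\psi_{-1/2}^n-\psi_{1/2}^n)$, and the interior term $-\sum_n\sum_{k\notin\{-N,0,N\}}\dt\,F_k^{n+1}(\psi_{k+1/2}^n-\psi_{k-1/2}^n)$.

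For the boundary terms I invoke Lemma~\ref{conv_trace}: since $u_{-N+1/2,\Dd_p}\to u(-1,\cdot)$ and $u_{N-1/2,\Dd_p}\to u(1,\cdot)$ in every $L^r(0,T)$, and the time-averaged data $\uu^{n+1},\ou^{n+1}$ converge to $\uu,\ou$ in $L^1(0,T)$, the Lipschitz continuity of $G_i$ yields the desired boundary integrals $-\int_0^T G_2(u(1,t),\ou)\psi(1,t)\,dt + \int_0^T G_1(\uu,u(-1,t))\psi(-1,t)\,dt$ with the sign prescribed by \eqref{weak_formulation}. For the interior sum I split $F_k^{n+1}$ as $G_i(u_{k-1/2}^{n+1},u_{k+1/2}^{n+1}) -(\varphi_i(u_{k+1/2}^{n+1})-\varphi_i(u_{k-1/2}^{n+1}))/\dx$. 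Using $G_i(a,a)=f_i(a)$ and the Lipschitz continuity of $G_i$, the convective part decomposes into $-\sum\dt\,\dx\,f_i(u_{k+1/2}^{n+1})\,(\psi_{k+1/2}^n-\psi_{k-1/2}^n)/\dx$, which converges to $-\int_0^T\int_{\O_i}f_i(u)\partial_x\psi\,dx\,dt$ by dominated convergence, plus a remainder of order $\|\partial_x\psi\|_\infty\,L_G\,\dx^{-1}\int_0^T\int |u_\Dd(x+\dx,t)-u_\Dd(x,t)|\,dx\,dt\cdot\dx$, which tends to $0$ by continuity of the translation in $L^1$. The diffusive part reads $\sum\dt\,\dx\,\nabla_\Dd\varphi_i(u_\Dd)\cdot\nabla_\Dd\psi$, where the discrete gradient of $\varphi_i(u_\Dd)$ is bounded in $L^2$ (thanks to Proposition~\ref{prop_L2H1_d} and the inequality $|\varphi_i(a)-\varphi_i(b)|\le \sqrt{\|\l_i\|_\infty}\,|\xi_i(a)-\xi_i(b)|$) and therefore converges weakly in $L^2$; identifying the weak limit as $\partial_x\varphi_i(u)$ (this follows from the strong $L^2$ convergence of $\varphi_i(u_{\Dd_p})$ to $\varphi_i(u)$ and $\varphi_i(u)\in L^2(0,T;H^1(\O_i))$), while $\nabla_\Dd\psi\to\partial_x\psi$ uniformly, gives the limit $\int_0^T\int_{\O_i}\partial_x\varphi_i(u)\,\partial_x\psi\,dx\,dt$ by a weak/strong product.

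The main obstacle is the interface contribution $\sum_n\dt\,F_0^{n+1}(\psi_{-1/2}^n-\psi_{1/2}^n)$, because the discrete interface flux contains a term of order $\dx^{-1}$ and is not a priori bounded. Here I exploit the \emph{stronger} form \eqref{L2H1_fin} of the energy estimate, which controls $\sum_n\dt\,(\xi_i(u_{0,i}^{n+1})-\xi_i(u_{\pm 1/2}^{n+1}))^2/\dx$. Combining this with $|\varphi_i(a)-\varphi_i(b)|\le C|\xi_i(a)-\xi_i(b)|$ and Cauchy--Schwarz in time yields $\sum_n\dt\,|F_0^{n+1}|\le C(1+\dx^{-1/2})$. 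Since $|\psi_{-1/2}^n-\psi_{1/2}^n|\le \|\partial_x\psi\|_\infty\dx$, the interface term is $O(\dx^{1/2})$ and vanishes in the limit. Assembling all the limit identities gives \eqref{weak_formulation}, which completes the proof that $u$ is a weak solution in the sense of Definition~\ref{weak_sol_def}.
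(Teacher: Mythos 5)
Your proof is correct, but it follows a genuinely different route from the paper's at the two delicate points. The paper first invokes a density lemma (Lemma~\ref{density}) to restrict the test functions to $\Tt = \{\psi\in \Dd(\overline{\O}\times [0,T))\,/\,\partial_x \psi \in \Dd((\cup_{i}\O_i)\times [0,T))\}$; for such $\psi$ and $p$ large one has $\psi_{-1/2}^n=\psi_{1/2}^n$, so the interface term vanishes identically, and the diffusive term can be summed by parts a \emph{second} time onto $\psi$, so that only the strong $L^2$ convergence of $\phii(u_{\Dd_p})$ is needed to pass to the limit. You instead keep arbitrary $\psi\in\Dd(\overline\O\times[0,T))$ and (a) kill the interface contribution quantitatively, via the refined estimate \eqref{L2H1_fin} together with $|\phii(a)-\phii(b)|\le\sqrt{\|\l_i\|_\infty}\,|\xi_i(a)-\xi_i(b)|$ and Cauchy--Schwarz in time, which correctly gives $\sum_n\dt|F_0^{n+1}|\le C(1+\dx^{-1/2})$ and hence an $O(\dx^{1/2})$ interface term; and (b) treat the diffusive term by weak $L^2$ compactness of the discrete gradient of $\phii(u_{\Dd_p})$ and identification of its weak limit with $\partial_x\phii(u)$ through a discrete integration by parts against smooth compactly supported functions. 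Both routes are standard and sound; the paper's buys a shorter limit passage at the price of the density argument, yours avoids restricting the test functions at the price of the weak-gradient identification and the sharper interface bound (which the paper proves anyway in \eqref{L2H1_fin}). Two small points to tighten: the convective remainder does not vanish ``by continuity of the translation in $L^1$'' applied to $u_{\Dd_p}$ alone, since the function changes with $p$ --- either insert the triangle inequality through the fixed limit $u$ (using the already-established strong $L^1$ convergence) or, as the paper does, deduce $\|\delta u_{\Dd_p}\|_{L^1}\to 0$ from the discrete $L^2(0,T;H^1(\O_i))$ estimate on $\xi_i(u_{\Dd_p})$ and the uniform continuity of $\xi_i^{-1}$; and the dual cells adjacent to $x=0$ and $x=\pm1$ are omitted from the interior sum, so you should note that their total contribution is $O(\dx^{1/2})$ by Cauchy--Schwarz against the $L^2$-bounded discrete gradient.
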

\begin{proof}
 As it has been seen in Proposition~\ref{compac_prop}, 
the discrete solution $u_{\Dd_p}$ converges, up to a subsequence, towards a function $u$ fulfilling all the regularity criteria 
to be a weak solution. In order to prove the convergence of the subsequence to a weak solution, it only remains to show that 
the weak formulation \eqref{weak_formulation} is satisfied by the limit $u$.

In order to simplify the proof of convergence of the scheme towards a weak solution, we will use a density result, which is a 
simple particular case of those stated in~\cite{Dro02}.
\begin{lem}\label{density}
Let $a,b\in \R$, $a<b$, then:
$\{ \psi\in C^\infty_c([a,b])/  \psi' \in C^\infty_c((a,b))\}$  is dense in  $W^{1,q}(a,b)$, $q\in [1,+\infty[$.
\end{lem}
This lemma particularly allows us, thanks to a straightforward generalization, to restrict the set of test functions $\psi$ for the weak formulation~(\ref{weak_formulation}) 
 to 
 $$\Tt = \{\psi\in \Dd(\overline{\O}\times [0,T)) /  \partial_x \psi \in \Dd((\cup_{i=1,2}\O_i)\times [0,T))\}.$$
Let $\psi\in \Tt$. 
For $j\in \llbrack -N_p, N_p-1 \rrbrack$, $n\in \llbrack 0,M_p-1 \rrbrack$, we denote 
by $ \psi_{j+1/2}^{n}=\psi(x_{j+1/2},t^n)$. Assume that $p$ is large enough to ensure:
\begin{equation}\label{val_psi_0}
\psi_{-1/2}^{n}=\psi_{1/2}^{n},\qquad \forall n\in \llbrack 0,M_p-1 \rrbrack,
\end{equation}
\begin{equation}\label{val_psi_N}
 \forall n\in \llbrack 0,M_p-1 \rrbrack, \qquad
 \left\{\begin{array}{l}
\psi_{-N+1/2}^{n}=\psi(-1,t^n),\\
 \psi_{N-1/2}^{n}=\psi(1,t^n).
 \end{array}\right.
\end{equation}
One has also
\begin{equation}\label{val_psi_T}
\psi_{j+1/2}^{M_p}=0, \qquad\forall j\in  \llbrack -N_p, N_p-1 \rrbrack.
\end{equation}
For $j\in \llbrack -N_p, N_p-1 \rrbrack$, $n\in \llbrack 0,M_p-1 \rrbrack$, let us multiply equation~(\ref{schema_1}) by 
$\psi_{j+1/2}^n\dt$, and sum on $j\in \llbrack -N_p, N_p-1 \rrbrack$, $n\in \llbrack 0,M_p-1 \rrbrack$, we get:
$$
\sum_{n=0}^{M_p-1}\sum_{j=-N_p}^{N_p-1}\phi_i(u_{j+1/2}^{n+1}-u_{j+1/2}^{n})\psi_{j+1/2}^n \dx + 
\sum_{n=0}^{M_p-1}\dt \sum_{j=-N_p}^{N_p-1} (F_{j+1}^{n+1}-F_{j}^{n+1})\psi_{j+1/2}^n=0,
$$
which can be rewritten thanks to~(\ref{val_psi_0}), \eqref{val_psi_N}, \eqref{val_psi_T}:
\begin{eqnarray}
&\ds \sum_{n=0}^{M_p-1} \sum_{j=-N_p}^{N_p-1}\phi_i u_{j+1/2}^{n+1} (\psi_{j+1/2}^n - \psi_{j+1/2}^{n+1}) \dx
- \sum_{j=-N_p}^{N_p-1} \phi_i  u_{j+1/2}^{0}\psi_{j+1/2}^0 \dx&\nn  \\
& + \ds \sum_{n=0}^{M_p-1}\dt \sum_{j\notin\{-N_p,0,N_p\}} F_j^{n+1}(\psi_{j-1/2}^{n} - \psi_{j+1/2}^{n} )& \nn \\
& - \ds \sum_{n=0}^{M_p-1}\dt F_{-N}^{n+1}\psi(-1,t^{n}) + \sum_{n=0}^{M_p-1}\dt F_N^{n+1} \psi(1,t^n)= 0.&\label{conv_10}
\end{eqnarray}
Using the definition of $F_j^{n+1}$, we obtain 
\begin{equation}\label{ABCDE_p} 
A_p + B_p + C_p + D_p + E_p = 0,
\end{equation}
with, using the notation $\psi_{-N-1/2}^n = \psi_{-N + 1/2}^n$, and $\psi_{N+1/2}^n = \psi_{N - 1/2}^n$,
\begin{eqnarray*}
A_p & = &  \sum_{n=0}^{M_p-1} \sum_{j=-N_p}^{N_p-1}\phi_i u_{j+1/2}^{n+1} (\psi_{j+1/2}^n - \psi_{j+1/2}^{n+1}) \dx \ ;\\
B_p & = & - \sum_{j=-N_p}^{N_p-1} \phi_i  u_{j+1/2}^{0}\psi_{j+1/2}^0 \dx \ ; \\
C_p & = & \sum_{n=0}^{M_p-1}\dt \sum_{j\notin\{-N_p,0,N_p\}} G_i(u_{j-1/2}^{n+1}, u_{j+1/2}^{n+1} )(\psi_{j-1/2}^{n} - \psi_{j+1/2}^{n} ) \ ; \\
D_p & = & - \sum_{n=0}^{M_p-1}\dt \sum_{j=-N_p}^{N_p-1} \phii(u_{j+1/2}^{n+1}) \frac{\psi_{j+3/2}^n - 2 \psi_{j+1/2}^n + \psi_{j-1/2}^n}{\dx}\ ; \\
E_p & = & - \ds \sum_{n=0}^{M_p-1}\dt G_1(\uu^{n+1},u_{-N+1/2}^{n+1})\psi(-1,t^{n}) + \sum_{n=0}^{M_p-1}\dt G_2(u_{N-1/2}^{n+1},\ou^{n+1}) \psi(1,t^n) \ .
\end{eqnarray*}
Since $h_{A_p}: (x,t)\mapsto \frac{\psi_{j+1/2}^n - \psi_{j+1/2}^{n+1}}\dt$ if $(x,t) \in (x_j,x_{j+1})\times (t^n,t^{n+1})$, converges uniformly towards $-\partial_t \psi$ as 
$p\to \infty$, and since $u_\Dd$ converges in $L^1(\OT)$ towards $u$,
\begin{equation}\label{A_p}
\lim_{p\to\infty} A_p = -\int_0^T \sum_{i=1,2} \int_{\O_i} \phi_i u(x,t) \partial_t \psi(x,t) dxdt.
\end{equation}
Thanks to the convergence in $L^1(\O)$ of $u_\Dd(x,0)$ towards $u_0$, we have
\begin{equation}\label{B_p}
\lim_{p\to\infty} B_p = - \sum_{i=1,2} \int_{\O_i} \phi_i u_0(x) \psi(x,0) dx.
\end{equation}
We have to rewrite $C_p = C^1_p + C^2_p$, with 
$$
C^1_p =  \sum_{n=0}^{M_p-1}\dt \sum_{j\notin\{-N_p,0,N_p\}} f_i(u_{j-1/2}^{n+1})(\psi_{j-1/2}^{n} - \psi_{j+1/2}^{n} ), 
$$
$$
C^2_p = \sum_{n=0}^{M_p-1}\dt \sum_{j\notin\{-N_p,0,N_p\}} \left( G_i( u_{j-1/2}^{n+1}, u_{j+1/2}^{n+1} ) - f_i(u_{j-1/2}^{n+1}) \right) (\psi_{j-1/2}^{n} - \psi_{j+1/2}^{n} ). 
$$
Thanks to Proposition~\ref{compac_prop}, the quantity $C_p^1$ converges towards $\ds - \int_0^T \sum_{i=1,2} \int_{\O_i} f_i(u)(x,t) \partial_x \psi(x,t) dxdt$ as $p\to \infty$. Concerning $C^2_p$, since 
$$
\left| G_i( u_{j-1/2}^{n+1}, u_{j+1/2}^{n+1} ) - f_i(u_{j-1/2}^{n+1} \right| =   \left| G_i( u_{j-1/2}^{n+1}, u_{j+1/2}^{n+1} ) -G_i( u_{j-1/2}^{n+1}, u_{j-1/2}^{n+1} ) \right| 
\le L_G | u_{j+1/2}^{n+1}- u_{j-1/2}^{n+1} |,
$$
and since $(x,t) \mapsto  \frac{\psi_{j-1/2}^{n} - \psi_{j+1/2}^{n} }\dx$ on $(x_{j-1/2},x_{j+1/2})\times(t^n,t^{n+1})$ is uniformly bounded by $\| \partial_x \psi \|_\infty$, 
$$
| C_p^2 | \le L_G \| \partial_x \psi \|_\infty \int_0^T \int_\O |\delta u_{\Dd_p}(x,t)| dxdt,
$$
where $\delta u_{\Dd_p}(x,t) = u_{j+1/2}^{n+1} - u_{j-1/2}^{n+1}$ on $(x_{j-1/2},x_{j+1/2})\times(t^n,t^{n+1})$, $(j\in\llbrack -N_p+1, N_p-1 \rrbrack)$, and $0$ otherwise. It is easy to check, thanks to the discrete $L^2((0,T);H^1(\O_i))$ estimates stated in Proposition~\ref{prop_L2H1_d}, that $\delta u_{\Dd_p}$ tends to $0$ in $L^1(\OT)$ as $p\to \infty$. Then
\begin{equation}\label{C_p}
\lim_{p\to\infty} C_p = - \int_0^T \sum_{i=1,2} \int_{\O_i} f_i(u)(x,t) \partial_x \psi(x,t) dxdt.
\end{equation}
Since, using Proposition~\ref{compac_prop}, $\phii(u_{\Dd_p})$ tends to $\phii(u)\in L^2((0,T);H^1(\O_i))$ in the $L^2(\OiT)$-topology, one has:
\begin{equation}\label{D_p}
\lim_{p\to\infty} D_p = - \int_0^T \sum_{i=1,2} \int_{\O_i} \phii(u)(x,t) \partial^2_{xx} \psi(x,t) dxdt = 
\int_0^T \sum_{i=1,2} \int_{\O_i}\partial_x \phii(u)(x,t) \partial_{x} \psi(x,t) dxdt.
\end{equation}
The strong convergence of the traces, stated in Proposition \ref{compac_prop} allows us to claim that 
\begin{equation}\label{E_p}
\lim_{p\to\infty} E_p = - \int_0^T   G_1(\uu(t), u(-1,t)) \psi(-1,t) dt  + \int_0^T G_2( u(1,t) , \ou(t) ) \psi(1,t) dt.
\end{equation}
We can thus take the limit for $p\to\infty$ in \eqref{ABCDE_p}, and it follows from \eqref{A_p}-\eqref{B_p}-\eqref{C_p}-\eqref{D_p}-\eqref{E_p} that 
$u$ fulfills the weak formulation \eqref{weak_formulation}.
\end{proof}
%
%
\section{Uniform bound on the fluxes}\label{Bounded-fluxes}
In this section, we show that, under some regularity assumptions on the initial data, there exists a solution with bounded fluxes. This 
existence result  is the consequence of some additional estimates on the discrete solution, and will be necessary to get the uniqueness 
result of Theorem~\ref{unicite_bounded}.
\begin{Def}\label{def_bounded}
A function $u$ is said to be a {\bf bounded-flux solution} to the problem~(\ref{P}) if:
\begin{enumerate}
\item $u$ is a weak solution to the problem~(\ref{P}) in the sense of Definition~\ref{weak_sol_def};
\item $\partial_x\phii(u)$ belongs to $L^\infty(\OiT)$.
\end{enumerate}
\end{Def}
In order to get an existence result, we need more regularity on the initial data, as stated below.
\begin{Hyps}\label{hyps_bounded}
We assume that:
\begin{enumerate}
\item $\partial_x\phii(u_0) \in L^{\infty}(\O_i),$ $0\le u_0 \le 1$;
\item $\t\pi_1(u_{0,1})\cap \t\pi_2(u_{0,2}) \neq \emptyset$, where $u_{0,i}$ is the trace of ${u_0}_{|\O_i}$ on $\{x=0\}$,
\end{enumerate}
\end{Hyps}
\begin{thm}\label{thm_bounded} 
Suppose that assumptions~\ref{hyps_bounded} 
are fulfilled. Let $(M_p)_{p\in\N}$, $(N_p)_{p\in\N}$ be two sequences of positive integers tending to $+\infty$.
Let $(u_{\Dd_p})_{p\in\N}$ be the sequence of the associated discrete solutions obtained via the finite volume 
scheme~(\ref{schema_1}), and let $u$ be an adherence value of the sequence 
$(u_{\Dd_p})_{p\in\N}$. Then $u$ is a bounded flux solution to the problem~(\ref{P}) in the sense of 
Definition~\ref{def_bounded}. This particularly ensures the existence of such a bounded-flux solution.
\end{thm}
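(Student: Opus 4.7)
The plan is to upgrade the conclusion of Theorem~\ref{conv_weak_thm} by proving the extra regularity $\partial_x\phii(u)\in L^\infty(\OiT)$ for any adherence value $u$. Since $f_i(u)\in L^\infty(\OiT)$ (as $f_i$ is Lipschitz and $0\le u\le 1$) and since $\phii(u_{\Dd_p})\to \phii(u)$ in $L^2(0,T;H^1(\O_i))$ by Proposition~\ref{compac_prop}, it suffices to establish a uniform-in-$p$ $L^\infty$ bound on the discrete fluxes $F_j^{n+1}$: a weak-$\star$ compactness and identification argument on the piecewise-constant reconstruction of $F_j^{n+1}$ will then provide $\partial_x\phii(u)=f_i(u)-F_i$ with $F_i\in L^\infty(\OiT)$.

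The first step is an initialization bound $|F_j^1|\le C_0$ uniform in $\Dd_p$. On interior faces $j\notin\{-N,0,N\}$, the diffusive part is controlled once one shows that the implicit step does not degrade the discrete space slope of $\phii(u^0)$ too much — i.e.\ that $|\phii(u^1_{j+1/2})-\phii(u^1_{j-1/2})|/\dx\le C$, which follows from Assumption~\ref{hyps_bounded}(1) together with the Lipschitz structure of $G_i$ and a contraction argument on the implicit step. The external boundary fluxes $F^1_{\pm N}$ are handled by Remark~\ref{FN_borne}. For $F_0^1$, the compatibility Assumption~\ref{hyps_bounded}(2), combined with Lemma~\ref{lem_trans_d} (which gives the continuous monotone dependence $(u^1_{-1/2},u^1_{1/2})\mapsto (u^1_{0,1},u^1_{0,2})$) and the bounded one-sided slopes of $\phii(u_0)$ at $x=0^{\pm}$, yields a uniform bound on the interface flux. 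From $\phi_i\dx(u^1_{j+1/2}-u^0_{j+1/2})=\dt(F^1_j-F^1_{j+1})$ one then gets $\|u^1_\Dd-u^0_\Dd\|_\infty\le C_1\,\dt$.

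The second step propagates this in time by a discrete $L^\infty$ comparison. The time-shifted family $\tilde u^n_\Dd:=u^{n+1}_\Dd$ is itself a discrete solution to the scheme with initial datum $u^1_\Dd$ and shifted discrete boundary traces $\uu^{n+2},\ou^{n+2}$. Because every $H_{j+1/2}$ is increasing in its first argument and non-increasing in all the others, and because the interface map $(a,b)\mapsto(c,d)$ from Lemma~\ref{lem_trans_d} is non-decreasing in each of its arguments, the pointwise version of the argument used for Lemma~\ref{comp_d_lem} yields a pointwise (not just $L^1$) comparison of $\tilde u_\Dd$ and $u_\Dd$. This gives
\[
\|u^{n+1}_\Dd-u^n_\Dd\|_\infty \le \|u^1_\Dd-u^0_\Dd\|_\infty + \max_n(|\uu^{n+2}-\uu^{n+1}|+|\ou^{n+2}-\ou^{n+1}|)\cdot O(1) \le C_2\,\dt
\]
(the second contribution being controlled by the same first-step-type argument applied near the boundary). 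A telescopic summation of $\phi_i\dx(u^{n+1}_{j+1/2}-u^n_{j+1/2})=\dt(F^{n+1}_j-F^{n+1}_{j+1})$ starting from $j=-N$, using the bound $|F^{n+1}_{-N}|\le \|G_1\|_\infty$ from Remark~\ref{FN_borne}, finally gives $|F^{n+1}_j|\le C$ uniformly in $j,n$ and $\Dd_p$.

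The main obstacle is the interface in Step~2: the $L^1$-contraction of Lemma~\ref{comp_d_lem} is not sharp enough, and the nonlinear implicit coupling \eqref{trans_flux_d}-\eqref{trans_flux_d2}-\eqref{raccord_pression_d} has to be traversed by a true pointwise maximum principle. The componentwise monotonicity of the interface map granted by Lemma~\ref{lem_trans_d} is precisely what makes this possible, but it is also the place where the monotone choice of the scheme is indispensable. A second, more minor, obstacle is the control of the shifted boundary data, which requires extracting some time regularity of $\uu,\ou$ from their $L^\infty$ average values; it can be treated by a direct flux-comparison argument near $x=\pm 1$ analogous to the initialization at $x=0$.
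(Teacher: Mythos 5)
Your overall architecture (uniform $L^\infty$ bound on the discrete fluxes, then passage to the limit) matches the paper's, and your concluding step is fine. But Step 2 contains a genuine gap that invalidates the argument. You claim a pointwise estimate $\|u^{n+1}_\Dd-u^n_\Dd\|_\infty\le C\,\dt$ propagated in time by ``the pointwise version of the argument used for Lemma~\ref{comp_d_lem}''. No such version exists: Lemma~\ref{comp_d_lem} is a Crandall--Tartar type $L^1$-contraction, obtained from conservativity plus monotonicity, and what monotonicity gives pointwise is only order preservation between a solution and a sub/supersolution. To deduce $|u^{n+1}-u^n|\le C\dt$ everywhere from $|u^1-u^0|\le C\dt$ everywhere you would need $u^n\pm C\dt$ to be a discrete super/subsolution, which fails because $G_i$ and $\phii$ are nonlinear ($G_i(a+c,b+c)\ne G_i(a,b)$, $\phii(a+c)-\phii(b+c)\ne\phii(a)-\phii(b)$). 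Moreover the estimate you are trying to prove is strictly stronger than the theorem: in the limit it would give $\partial_t u\in L^\infty(\OT)$, i.e.\ $\partial_x F_i\in L^\infty$, hence $\partial^2_{xx}\phii(u)\in L^\infty$, which is not expected from the mere assumption $\partial_x\phii(u_0)\in L^\infty$. The correct device, which is the paper's key idea and is absent from your proposal, is a discrete maximum principle \emph{on the fluxes themselves}: one computes $F_j^{n+1}-F_j^n$, uses the scheme to substitute for the time increments of $u$, and obtains an M-matrix relation $\bigl(1+a^{n+1}_{j,j-1}+a^{n+1}_{j,j+1}\bigr)F_j^{n+1}-a^{n+1}_{j,j-1}F_{j-1}^{n+1}-a^{n+1}_{j,j+1}F_{j+1}^{n+1}=F_j^n$ with non-negative coefficients; at the interface the monotone evolution of $(u^n_{0,1},u^n_{0,2})$ forced by the graph condition yields the two one-sided inequalities needed to run the argument at the extremal index, and Remark~\ref{FN_borne} handles the case where the extremum sits on the outer boundary. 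This gives $\max_j|F_j^{n+1}|\le\max(\max_j|F_j^n|,\max_i\|G_i\|_\infty)$ directly, with no bound on $\partial_t u$.

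A secondary weak point is your initialization. Bounding $F_j^1$ requires controlling the discrete slopes of $\phii(u^1_\Dd)$ \emph{after} one implicit step, which you dispatch with an unspecified ``contraction argument''; this is essentially as hard as the propagation step itself. The paper avoids the issue by extending the flux formulas to the initial data, defining $F_j^0$ from $u^0_{j\pm1/2}$ (and an auxiliary interface couple $(u^0_{0,1},u^0_{0,2})$ via Lemma~\ref{lem_trans_d}), so that Lemma~\ref{unif_u0} only uses Assumptions~\ref{hyps_bounded} on $u_0$, and the flux maximum principle then starts at $n=0$.
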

All the section~\ref{Bounded-fluxes} will be devoted to the proof of Theorem~\ref{thm_bounded}.
We only need to verify the second point in Definition~\ref{def_bounded}, 
because we have already proven in Theorem~\ref{conv_weak_thm} that $u$ is a weak solution. So the aim of this section 
is to get the uniform bound on the fluxes. Such an estimate can be found in~\cite{CGP07} in the case where the convection is neglected.
It is obtained using a thin regular transition layer between $\O_1$ and $\O_2$, and a regularization of the initial data $u_0$. 
This technique was also used in~\cite{BPvD03} to get a $BV$-estimate on the fluxes in the case of a non-bounded 
domain $\O$, and for particular values of the data (which are supposed to be more regular). In this paper, we only deal with 
the discrete solution, which can be seen as a regularization of the solution to the continuous problem \eqref{P}. 

We extend the definitions of the discrete internal fluxes~(\ref{Flux_int})-(\ref{trans_flux_d2}) 
to the case $n=-1$, i.e. in the time $t=0$. 
For all $j\in \llbrack -N+1, N-1\rrbrack$, $j\neq 0$, 
\begin{equation}\label{Flux_int_0}
F_j^{0}=G_i(u_{j-1/2}^{0},u_{j+1/2}^{0})-\frac{\phii(u_{j+1/2}^{0})-\phii(u_{j-1/2}^{0})}{\dx}\ .
\end{equation}
Thanks to Lemma \ref{lem_trans_d}, there exists a unique couple $(u_{0,1}^0, u_{0,2}^0)$ solution to the system
\begin{eqnarray} 
 &&\ds F_{0}^{0}  =   G_1(u_{-1/2}^{0},u_{0,1}^0)
-\frac{\varphi_1(u_{0,1}^{0})-\varphi_1(u_{-1/2}^{0})}{\dx/2} \label{trans_flux_d_0}
 =  G_2(u_{0,2}^{0}, u_{1/2}^{0})-\frac{\varphi_2(u_{1/2}^{0})-\varphi_2(u_{0,2}^{0})}{\dx/2}, \label{flux_00_d}\\
 &&\ds \t\pi_1(u_{0,1}^{0})\cap\t\pi_2(u_{0,2}^{0})\neq\emptyset.\label{raccord_pression_d_0}
\label{trans_flux_d2_0}
\end{eqnarray}
\begin{rmk}
 $u_{0,1}^0$ and $u_{0,2}^0$ are given by Lemma~\ref{lem_trans_d}, and so they 
are different of $u_{0,1}$ and $u_{0,2}$. 
\end{rmk}
\begin{lem}\label{unif_u0}
There exists $C>0$ depending only on $u_0$, $\phii$, $q$ such that
$$
\max_{j\in\llbrack-N+1,N-1\rrbrack} |F_j^0| \le C.
$$
\end{lem}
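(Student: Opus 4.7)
The plan is to split the estimate into two cases: the interior indices $j \in \llbrack -N+1,N-1 \rrbrack \setminus \{0\}$, where both cells adjacent to $x_j$ lie in the same subdomain, and the interface index $j=0$. The key input from Assumption~\ref{hyps_bounded} is that the continuous representative of $\varphi_i(u_0)$ is Lipschitz on $\O_i$ with constant $L_0 := \max_{i=1,2} \|\partial_x \varphi_i(u_0)\|_{L^\infty(\O_i)}$, while $G_i$ is bounded on $[0,1]^2$ and $\varphi_i$ on $[0,1]$.

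For an interior index, the cell-average $u_{j\pm 1/2}^0$ lies in $[\mathrm{ess\,inf}\, u_0, \mathrm{ess\,sup}\, u_0]$ on its cell, so monotonicity and continuity of $\varphi_i$ place $\varphi_i(u_{j\pm 1/2}^0)$ in the range of $\varphi_i(u_0)$ on the same cell. Since the oscillation of the Lipschitz function $\varphi_i(u_0)$ over the interval $(x_{j-1}, x_{j+1})$ of length $2\dx$ is at most $2 L_0 \dx$, one gets $|\varphi_i(u_{j+1/2}^0) - \varphi_i(u_{j-1/2}^0)| \le 2 L_0 \dx$. Together with $|G_i(u_{j-1/2}^0, u_{j+1/2}^0)| \le \|G_i\|_\infty$, definition \eqref{Flux_int_0} yields $|F_j^0| \le \|G_i\|_\infty + 2 L_0$.

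For the interface flux the difficulty is that $(u_{0,1}^0, u_{0,2}^0)$ is defined only implicitly through the nonlinear system \eqref{trans_flux_d_0}-\eqref{raccord_pression_d_0}, so no direct Lipschitz estimate is available. The idea is a sandwich argument based on the parametrization used in the proof of Lemma~\ref{lem_trans_d}. With $a = u_{-1/2}^0$, $b = u_{1/2}^0$, define
\[
F^L(p) := G_1\bigl(a, \t\pi_1^{-1}(p)\bigr) + \tfrac{2}{\dx}\bigl(\varphi_1(a) - \varphi_1(\t\pi_1^{-1}(p))\bigr), \quad F^R(p) := G_2\bigl(\t\pi_2^{-1}(p), b\bigr) + \tfrac{2}{\dx}\bigl(\varphi_2(\t\pi_2^{-1}(p)) - \varphi_2(b)\bigr).
\]
The monotonicities of $G_i$, $\t\pi_i^{-1}$ and $\varphi_i$ make $F^L$ non-increasing and $F^R$ non-decreasing in $p$, and the common value $F^L(p^\star) = F^R(p^\star)$ at the zero $p^\star$ of $\Lambda = F^R - F^L$ is exactly $F_0^0$. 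By Assumption~\ref{hyps_bounded}, pick $\tilde p \in \t\pi_1(u_{0,1}) \cap \t\pi_2(u_{0,2})$; a short case analysis on whether $u_{0,i} \in (0,1)$ or $u_{0,i} \in \{0,1\}$ shows that the single-valued selection of $\t\pi_i^{-1}$ employed in Lemma~\ref{lem_trans_d} satisfies $\t\pi_i^{-1}(\tilde p) = u_{0,i}$. The Lipschitz argument of the previous paragraph, adapted to the cell $(-\dx, 0)$ and the trace $u_0(0^-)$, gives $|\varphi_1(a) - \varphi_1(u_{0,1})| \le L_0 \dx$ and similarly $|\varphi_2(b) - \varphi_2(u_{0,2})| \le L_0 \dx$, whence $|F^L(\tilde p)|, |F^R(\tilde p)| \le \max_i \|G_i\|_\infty + 2 L_0$. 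The opposite monotonicities of $F^L, F^R$ then force $F_0^0$ to lie between $F^L(\tilde p)$ and $F^R(\tilde p)$ regardless of the sign of $p^\star - \tilde p$, which produces the announced bound.

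The main obstacle is the interface: $(u_{0,1}^0, u_{0,2}^0)$ solves a nonlinear system with no a priori quantitative closeness to the initial traces $(u_{0,1}, u_{0,2})$. The workaround is the opposite monotonicities of $F^L$ and $F^R$, together with the existence, guaranteed by the capillary compatibility assumption, of a test pressure $\tilde p$ at which both quantities can be controlled by $L_0$.
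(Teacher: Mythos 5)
Your proof is correct and follows essentially the same route as the paper's: the interior fluxes are bounded by placing the cell averages in the range of $u_0$ and invoking the Lipschitz bound on $\varphi_i(u_0)$, and the interface flux $F_0^0$ is sandwiched between the two one-sided flux expressions evaluated at the compatible initial traces $(u_{0,1},u_{0,2})$. Your parametrization by the pressure, via $F^L$, $F^R$ and the test value $\tilde p$, is an explicit rewriting of the paper's observation that the monotonicity of the graph transmission condition forces either $u_{0,i}^0\ge u_{0,i}$ for both $i$ or $u_{0,i}^0\le u_{0,i}$ for both $i$.
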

\begin{proof}
Since $\phii(u_0)$ is a Lipschitz continuous function,  and $\phii^{-1}$ is continuous, ${u_0}_{|\O_i}$ is a continuous 
function, and there exists $y_{j+1/2} \in (x_j, x_{j+1})$ such that $u_{j+1/2}^0=u_0(y_{j+1/2})$. Then \eqref{Flux_int_0} 
can be rewritten 
$$
F_j^{0}=G_i(u_0(y_{j-1/2}),u_0(y_{j+1/2}))-\frac{\phii(u_0(y_{j+1/2}))-\phii(u_0(y_{j-1/2}))}{\dx}\ .
$$
Using the fact that  $\partial_x \phii(u_0) \in L^\infty(\O_i)$ gives directly: $\forall j\in \llbrack -N+1, N-1 \rrbrack\setminus\{0\}$, 
\begin{equation}\label{Fj0}
\left| F_j^0 \right|  \le \max_{i=1,2} \| G_i \|_\infty + 2\max_{i=1,2} \left( \|\partial_x \phii(u_0)\|_{L^\infty(\O_i)}\right).
\end{equation}
The monotony of the transmission conditions  $\t\pi_1(u_{0,1}^{0})\cap\t\pi_2(u_{0,2}^{0})\neq\emptyset$ and 
$\t\pi_1(u_{0,1})\cap\t\pi_2(u_{0,2})\neq\emptyset$ 
implies that either $u_{0,1}^{0} \ge u_{0,1}$ and $u_{0,2}^{0} \ge u_{0,2}$, or 
$u_{0,1}^{0} \le u_{0,1}$ and $u_{0,2}^{0} \le u_{0,2}$.
Assume for example that $u_{0,1}^{0} \ge u_{0,1}$ and $u_{0,2}^{0} \ge u_{0,2}$ ---the other case could be treated similarly--- 
then one deduce from~(\ref{flux_00_d}) that:
$$
 G_2(u_{0,2}, u_0(y_{1/2}))-\frac{\varphi_2(u_0(y_{1/2}))-\varphi_2(u_{0,2})}{\dx/2}   \le   F_0^0 \le G_1(u_0(y_{-1/2}),u_{0,1})
-\frac{\varphi_1(u_{0,1})-\varphi_1(u_0(y_{-1/2}))}{\dx/2},
$$
and so since $\phii(u_0)$ is a Lipschitz continuous function, 
$$
| F_{0}^{0} | \le \max_{i=1,2} \| G_i \|_\infty + 2\max_{i=1,2} \left( \|\partial_x \phii(u_0)\|_{L^\infty(\O_i)}\right).
$$
\end{proof}
%
%
\begin{prop}\label{unif_flux}
There exists $C>0$ depending only on $u_0$, $\phii$, $G_i$,  such that
$$
\max_{j\in \llbrack -N+1,N-1\rrbrack}\left(\max_{n\in \llbrack 0,M\rrbrack} |F_j^n|\right) \le C.
$$
\end{prop}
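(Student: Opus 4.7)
The plan is to propagate the initial flux bound from Lemma~\ref{unif_u0} to every time step by deriving a discrete parabolic recursion for the fluxes themselves and applying a discrete maximum principle to it. The external boundary fluxes are already under control by Remark~\ref{FN_borne}, and the extended definition \eqref{Flux_int_0}--\eqref{trans_flux_d2_0} ensures that $F_j^n$ is defined for every $n\in\llbrack 0,M\rrbrack$, so an induction in $n$ starting from $n=0$ is natural.

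First I would derive a recursion. From the scheme \eqref{schema_1}, for $j\notin\{-N,0,N\}$,
$$
u_{j+1/2}^{n+1}-u_{j+1/2}^{n}=\frac{\dt}{\phi_i\dx}\bigl(F_j^{n+1}-F_{j+1}^{n+1}\bigr).
$$
Writing the time difference $F_j^{n+1}-F_j^{n}$ using the definitions \eqref{Flux_int} and \eqref{Flux_int_0} and applying the mean value theorem to $G_i$ (monotone in its two arguments) and to $\phii$ (increasing), one gets coefficients $g_1\ge 0$, $g_2\le 0$, $p_{j\pm 1/2}\ge 0$ such that
$$
F_j^{n+1}-F_j^n=\bigl(g_1+\tfrac{p_{j-1/2}}{\dx}\bigr)\bigl(u^{n+1}_{j-1/2}-u^n_{j-1/2}\bigr)+\bigl(g_2-\tfrac{p_{j+1/2}}{\dx}\bigr)\bigl(u^{n+1}_{j+1/2}-u^n_{j+1/2}\bigr).
$$
Substituting the scheme for each time-increment produces a closed discrete parabolic equation for the fluxes,
$$
F_j^{n+1}-F_j^n=\alpha_j^n\bigl(F_{j-1}^{n+1}-F_j^{n+1}\bigr)+\beta_j^n\bigl(F_{j+1}^{n+1}-F_j^{n+1}\bigr),\qquad \alpha_j^n,\beta_j^n\ge 0.
$$

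Next, I would apply the discrete maximum principle to this recursion. Fix $n$ and let $j_\star$ be an interior index where $F_{j_\star}^{n+1}$ attains its maximum over such indices; then both parentheses on the right are nonpositive, so $F_{j_\star}^{n+1}\le F_{j_\star}^n$. Combined with the symmetric estimate for the minimum, this gives
$$
\max_{j\text{ interior}}|F_j^{n+1}|\le \max\Bigl(\max_j|F_j^n|,\;\|G_1\|_\infty,\;\|G_2\|_\infty,\;|F_0^{n+1}|\Bigr),
$$
using Remark~\ref{FN_borne} for the external edges $j=\pm N$. An induction in $n$ starting from Lemma~\ref{unif_u0} then bounds $\max_{j,n}|F_j^n|$ by a constant depending only on $u_0$, $\phii$, $G_i$, provided the interface flux $F_0^{n+1}$ is also bounded uniformly.

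The main obstacle is precisely the interface $\{x=0\}$. The flux $F_0^{n+1}$ is defined implicitly through the coupled transmission system \eqref{trans_flux_d}--\eqref{raccord_pression_d} involving the auxiliary unknowns $(u_{0,1}^{n+1},u_{0,2}^{n+1})$, and the plain cell-wise argument above does not apply at $j=0$. To handle this, I would use the monotonicity of the transmission map established in Lemma~\ref{lem_trans_d}: the interface states $u_{0,i}^{n+1}$ are monotone continuous functions of the neighbouring cell values $u_{\mp 1/2}^{n+1}$, which yields a discrete parabolic recursion for $F_0^{n+1}$ of the same type as above (with coefficients depending on the interface transmission, and an extra bounded source stemming from Lemma~\ref{monotony_d_lem1} that absorbs the sign term $-|q||\pi_1(c)-\pi_2(d)|$ appearing there). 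The maximum principle then extends across the interface, and the same inductive argument gives the desired uniform bound. Verifying that this extended monotone structure really holds---in particular that the extra contribution at the interface can be estimated uniformly in $\dx,\dt$ using the monotonicity of the $\tilde\pi_i$'s and the $L^\infty$ bound $0\le u_\Dd\le 1$---is the delicate technical point of the proof.
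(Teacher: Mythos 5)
Your treatment of the interior cells and of the outer boundary is exactly the paper's argument: the implicit recursion $(1+a_{j,j-1}^{n+1}+a_{j,j+1}^{n+1})F_j^{n+1}-a_{j,j-1}^{n+1}F_{j-1}^{n+1}-a_{j,j+1}^{n+1}F_{j+1}^{n+1}=F_j^n$ with nonnegative coefficients, the maximum/minimum principle at an interior extremal index, Remark~\ref{FN_borne} for $j=\pm N$, and induction from Lemma~\ref{unif_u0}. The gap is at $j=0$, and your proposed repair there does not work as stated. First, Lemma~\ref{monotony_d_lem1} is the wrong tool: it is an energy-type inequality (it bounds the product $(\pi_1(c)-\pi_2(d))F_0$ from below and is used for the $L^2H^1$ estimate), and it gives no control on the time increment $F_0^{n+1}-F_0^n$. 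Second, even if you did produce a recursion for $F_0^{n+1}$ with ``an extra bounded source,'' that would not close the induction: a source that is merely bounded (rather than $O(\dt)$) accumulates over the $M=T/\dt$ time steps and destroys the uniform bound as $\dt\to 0$. Third, the spatial monotonicity of $(a,b)\mapsto(c,d)$ from Lemma~\ref{lem_trans_d} is not by itself enough to eliminate the interface unknowns: $u_{0,1}^{n+1}-u_{0,1}^n$ cannot be substituted via the scheme because the interface values satisfy no conservation equation.

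The paper's actual resolution uses a different monotonicity, namely that of the graphs $\t\pi_i$ in the transmission condition \eqref{raccord_pression_d}: the two interface traces move \emph{together} in time, i.e.\ either $u_{0,1}^{n+1}\ge u_{0,1}^n$ and $u_{0,2}^{n+1}\ge u_{0,2}^n$, or both inequalities are reversed. In the first case, writing $F_0^{n+1}-F_0^n$ with the $\O_1$-side expression, every term containing $u_{0,1}^{n+1}-u_{0,1}^n$ has a favourable sign and can be dropped, leaving a \emph{one-sided} inequality $(1+a_{0,-1}^{n+1})F_0^{n+1}-a_{0,-1}^{n+1}F_{-1}^{n+1}\le F_0^n$; the $\O_2$-side expression symmetrically gives $(1+b_{0,1}^{n+1})F_0^{n+1}-b_{0,1}^{n+1}F_1^{n+1}\ge F_0^n$. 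No source term appears. These two one-sided inequalities are precisely what the maximum and minimum principles respectively require at $j=0$, and the induction then goes through uniformly in $\dx$ and $\dt$. You should replace your interface paragraph with this argument.
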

\begin{proof}
For all $j\in \llbrack -N+1, N-1 \rrbrack\setminus\{0\}$, for all $n\in \llbrack 0,M-1 \rrbrack$, 
\begin{eqnarray}
F_j^{n+1}-F_j^n&=& \left( G_i(u_{j-1/2}^{n+1},u_{j+1/2}^{n+1})-G_i(u_{j-1/2}^n,u_{j+1/2}^{n+1})\right) + \left(  G_i(u_{j-1/2}^{n},u_{j+1/2}^{n+1}) - G_i(u_{j-1/2}^n,u_{j+1/2}^n)\right)  \nonumber \\
&&+ 
\left(\frac{\phii(u_{j-1/2}^{n+1})-\phii(u_{j-1/2}^{n})}{\dx}\right) 
- \left(\frac{\phii(u_{j+1/2}^{n+1})-\phii(u_{j+1/2}^{n})}{\dx}\right). \nonumber 
\end{eqnarray}
Thus, using (\ref{schema_1}) yields
\begin{eqnarray}
F_j^{n+1}-F_j^n&=& 
\frac{\dt}{\phi_i \dx}\frac{G_i(u_{j-1/2}^{n+1},u_{j+1/2}^{n+1})-G_i(u_{j-1/2}^n,u_{j+1/2}^{n+1})}{u_{j-1/2}^{n+1} - u_{j-1/2}^n} \left( F_j^{n+1} - F_{j-1}^{n+1} \right) \nn \\
&& +\frac{\dt}{\phi_i \dx} \frac{G_i(u_{j-1/2}^{n},u_{j+1/2}^{n+1}) - G_i(u_{j-1/2}^n,u_{j+1/2}^n)}{u_{j+1/2}^{n+1} - u_{j+1/2}^n} \left( F_{j+1}^{n+1} - F_j^n   \right) \nn \\
&& + \frac{\dt}{\phi_i \dx^2}\frac{\phii(u_{j-1/2}^{n+1})-\phii(u_{j-1/2}^{n})}{u_{j-1/2}^{n+1} - u_{j-1/2}^n} \left( F_j^{n+1} - F_{j-1}^{n+1} \right) \nn \\
&& - \frac{\dt}{\phi_i \dx^2}\frac{\phii(u_{j+1/2}^{n+1})-\phii(u_{j-1/2}^{n})}{u_{j+1/2}^{n+1} - u_{j+1/2}^n} \left( F_{j+1}^{n+1} - F_{j}^{n+1} \right) .\nn
\end{eqnarray}
The monotonicity of the scheme is once again crucial, since it implies that there exist two non-negative values $a_{j,j+1}^{n+1}, a_{j,j-1}^{n+1}$ such that 
\begin{equation}\label{Mmat_1}
\left( 1+ a_{j,j-1}^{n+1} + a_{j,j+1}^{n+1} \right) F_j^{n+1} - a_{j,j-1}^{n+1} F_{j-1}^{n+1} - a_{j,j+1}^{n+1} F_{j+1}^{n+1} =  F_j^n. 
\end{equation}
The monotonicity of the graph transmission condition \eqref{raccord_pression_d} ensures that either $u_{0,1}^{n+1} \ge u_{0,1}^n$ and 
$u_{0,2}^{n+1} \ge u_{0,2}^{n}$, or $u_{0,1}^{n+1} \le u_{0,1}^n$ and $u_{0,2}^{n+1} \le u_{0,2}^{n}$. Suppose for example that 
$u_{0,1}^{n+1} \ge u_{0,1}^n$ and $u_{0,2}^{n+1} \ge u_{0,2}^{n}$, the other case being completely symmetrical.
\begin{eqnarray}
F_0^{n+1}-F_0^n&=& \left( G_1(u_{-1/2}^{n+1},u_{0,1}^{n+1})-G_1(u_{-1/2}^n,u_{0,1}^{n})\right)  \nonumber \\
&&+ 
\left(\frac{\varphi_1(u_{-1/2}^{n+1})-\varphi_1(u_{-1/2}^{n})}{\dx/2}\right) 
- \left(\frac{\varphi_1(u_{0,1}^{n+1})-\varphi_1(u_{0,1}^{n})}{\dx/2}\right) \label{DF01}  \\ 
&=& \left( G_2(u_{0,2}^{n+1},u_{1/2}^{n+1})-G_2(u_{0,2}^n,u_{1/2}^{n})\right)  \nonumber \\
&&+ 
\left(\frac{\varphi_2(u_{0,2}^{n+1})-\varphi_2(u_{1/2}^{n})}{\dx/2}\right) 
- \left(\frac{\varphi_2(u_{0,2}^{n+1})-\varphi_2(u_{1/2}^{n})}{\dx/2}\right). \label{DF02}  
\end{eqnarray}
It follows from \eqref{DF01} and from the monotony of $G_1, \varphi_1$ that 
$$
F_0^{n+1}-F_0^n \le  \left( G_1(u_{-1/2}^{n+1},u_{0,1}^{n})-G_1(u_{-1/2}^n,u_{0,1}^{n})\right) + 
\left(\frac{\varphi_1(u_{-1/2}^{n+1})-\varphi_1(u_{-1/2}^{n})}{\dx/2}\right) .
$$
Similar computations as those done to obtain \eqref{Mmat_1} provide the existence of a non-negative value $a_{0,-1}^{n+1}$ such that 
\begin{equation}\label{Mmat_2}
\left( 1+ a_{0,-1}^{n+1}  \right) F_0^{n+1}  - a_{0,-1}^{n+1} F_{-1}^{n+1} \le F_0^n.
\end{equation}
Considering \eqref{DF02} instead of \eqref{DF01} shows the existence of a non-negative value $b_{0,1}^{n+1}$ such that 
\begin{equation}\label{Mmat_3}
\left( 1+ b_{0,1}^{n+1}  \right) F_0^{n+1}  - b_{0,1}^{n+1} F_{1}^{n+1} \ge F_0^n.
\end{equation}
We denote by $j_{\rm max}^{n+1}$ (resp. $j_{\rm min}^{n+1}$) the integer such that 
$$F_{j_{\rm max}^{n+1}}^{n+1} = \max_{j\in \llbrack -N,N\rrbrack} F_j^{n+1} \qquad
(\textrm{resp. }F_{j_{\rm min}^{n+1}}^{n+1} = \min_{j\in \llbrack -N,N\rrbrack} F_j^{n+1}). $$
Either $j_{\rm max}^{n+1} \in \{ -N, N\}$, then it follows from the remark \ref{FN_borne} that $\max_{j\in \llbrack -N,N\rrbrack} F_{j}^{n+1} \le \max_{i=1,2}\| G_i \|_\infty$, or $j_{\rm max}^{n+1} \in \llbrack -N+1, N-1 \rrbrack$. In the latter case, \eqref{Mmat_1} and \eqref{Mmat_2} imply 
$$
\max_j F_j^{n+1} = F_{j_{\rm max}^{n+1}}^{n+1} \le F_{j_{\rm max}^{n+1}}^{n} \le \max_j F_j^n.
$$
Similarly, \eqref{Mmat_1} and \eqref{Mmat_2} yield 
$$
\min_j F_j^{n+1} = F_{j_{\rm min}^{n+1}}^{n+1} \ge F_{j_{\rm min }^{n+1}}^{n} \ge \min_j F_j^n.
$$
We obtain a kind of discrete maximum principle on the discrete fluxes, which corresponds to the uniform bound on the continuous fluxes proven in \cite{CGP07}.
It follows from Lemma \ref{unif_u0} that
$$
\max_{n\in \llbrack 0, M \rrbrack} \left( \max_{j\in \llbrack -N+1, N-1 \rrbrack} \left| F_j^{n+1} \right| \right) \le   \max_{i=1,2} \| G_i \|_\infty + 2\max_{i=1,2} \left( \|\partial_x \phii(u_0)\|_{L^\infty(\O_i)}\right).
$$
\end{proof}
\noindent{\it Conclusion of proof of Theorem~\ref{thm_bounded} }
Let $(N_p)_{p\in\N}$, $(M_p)_{p\in\N}$ be two sequences of positive integers tending to $+\infty$, 
and let $(u_{\Dd_p})_{p\in\N}$ the sequences of associated discrete solutions. It has been seen in 
theorem~\ref{conv_weak_thm} that $(u_{\Dd_p})_p$ tends to a weak solution $u$ in $L^r(\OT)$, for all $r\in [1,+\infty)$.\\
Let $i=1,2$, let $(x,y)\in \O_i$, let $t\in(0,T]$. For $p$ large enough,there exists 
$j_0, j_1 \in J_\text{int}$ such that 
$x_{j_0} \le x \le x_{j_0+1}$ and $x_{j_1} \le y \le x_{j_1+1}$, and there exists $n$ such that $t \in (t^n, t^{n+1}]$.
\begin{eqnarray*}
\left|\phii(u_{\Dd_p})(x,t)-\phii(u_{\Dd_p})(y,t)\right| &=& \left| \phii(u_{j_0+1/2}^{n+1})-  \phii(u_{j_1+1/2}^{n+1}) \right| \\
& = & \left| \sum_{j=j_1+1}^{j_0} \phii(u_{j+1/2}^{n+1})-  \phii(u_{j-1/2}^{n+1}) \right|\\
& \le & \sum_{j=j_1+1}^{j_0} \left|  \phii(u_{j+1/2}^{n+1})-  \phii(u_{j-1/2}^{n+1}) \right|.
\end{eqnarray*}
Using the definition of the discrete flux~(\ref{Flux_int}):
 $$
 \left|\phii(u_{\Dd_p})(x,t)-\phii(u_{\Dd_p})(y,t)\right| \le   
\sum_{j=j_1+1}^{j_0} \dx  \left|  F_j^{n+1} - G_i(u_{j-1/2}^{n+1},u_{j+1/2}^{n+1})   \right|.
$$
We deduce from Proposition~\ref{unif_flux} that 
there exists $C>0$, depending only on $u_0$, $\phii$, $G_i$ such that:
$$
 \left|\phii(u_{\Dd_p})(x,t)-\phii(u_{\Dd_p})(y,t)\right| \le   \sum_{j=j_1+1}^{j_0} \dx C \le C( |x-y| +2\dx ).
$$
Letting $p$ tend towards $+\infty$, i.e. $\dx$ and $\dt$ towards $0$ gives
\begin{equation}\label{Lipschitz_1}
 \left|\phii(u)(x,t)-\phii(u)(y,t)\right| \le C|x-y|.
\end{equation}
So we deduce from (\ref{Lipschitz_1}) that $\partial_x \phii(u)\in L^\infty(\OiT)$.
\hfill$\square$
%
%
\section{Uniqueness of the bounded-flux solution}\label{Uniqueness_section}
This section is devoted to the proof of Theorem \ref{unicite_bounded}, which is an adaptation
of \cite[Theorem 5.1]{CGP07} to the case where the convection is taken into account.
\begin{thm}\label{unicite_bounded}
 If $u$,$v$ are bounded-flux solutions in the sense of Definition~\ref{def_bounded} associated 
to the initial data $u_0$, $v_0$,
then for all $p\in [1,+\infty[$, $u$ and $v$ belong to $C([0,T];L^p(\O))$, and  the following $L^1$-contraction principle holds:
$\forall t\in [0,T]$,
$$
\int_{\O_i}\phi_i(u(x,t)-v(x,t))^\pm dx\le \int_{\O_i}\phi_i(u_0(x)-v_0(x))^\pm dx.
$$
This particularly implies the uniqueness of the bounded flux solution to the problem~(\ref{P}) 
\end{thm}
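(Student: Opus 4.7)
\medskip

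\noindent\textbf{Proof proposal.} The plan is to adapt the doubling-of-variables technique of Kruzhkov/Carrillo to our transmission setting, following the strategy of \cite{CGP07} but treating also the convective fluxes $f_i(u)$ and the boundary fluxes defined via $G_i$. Let $u,v$ be two bounded-flux solutions. Since $\partial_x\phii(u), \partial_x\phii(v)\in L^\infty(\OiT)$, the fluxes $F_i^u = f_i(u)-\partial_x\phii(u)$ and $F_i^v = f_i(v)-\partial_x\phii(v)$ lie in $L^\infty$, so the traces of $u$ and $v$ at $x=0$ and $x=\pm1$ are well defined and the PDE $\phi_i\partial_t u + \partial_x F_i^u=0$ holds in $\Dd'(\OiT)$ with $F_i^u\in L^\infty$; in particular $\phi_i\partial_t u \in L^\infty(0,T;W^{-1,\infty}(\O_i))$, so once the $L^1$-contraction is established the time-continuity $u,v\in C([0,T];L^p(\O))$ will follow from a standard interpolation-continuity argument.

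The heart of the proof is, for each $i\in\{1,2\}$, to establish the Kato-type inequality
\begin{equation*}
\phi_i\partial_t (u-v)^+ + \partial_x\!\left[ \mathrm{sgn}^+(u-v)\bigl(f_i(u)-f_i(v) - \partial_x(\phii(u)-\phii(v))\bigr)\right] \le 0 \quad\text{in }\Dd'(\OiT).
\end{equation*}
To obtain it I would double the variables, testing the weak formulation for $u(x,t)$ and for $v(y,s)$ against a test function of the form $\psi(x,t,y,s)=\theta(t)\rho_\varepsilon(x-y)\rho_\eta(t-s)$ localized inside a fixed $\O_i\times\O_i$ (so no interface or boundary term appears at this stage), then subtract, approximate $\mathrm{sgn}^+$ by a smooth nondecreasing $S_\delta$ and let $\delta,\varepsilon,\eta\to 0$ in that order. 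The key algebraic point is the Carrillo/Otto identity: on $\{u\neq v\}$, monotonicity of $\phii$ gives $\mathrm{sgn}^+(u-v) = \mathrm{sgn}^+(\phii(u)-\phii(v))$, and since $\phii(u),\phii(v)$ are Lipschitz in $x$ the chain rule yields $\mathrm{sgn}^+(u-v)\partial_x(\phii(u)-\phii(v))=\partial_x\bigl[(\phii(u)-\phii(v))^+\bigr]$. The convective part is handled by the classical Kruzhkov estimate using the Lipschitz continuity of $f_i$.

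Integrating the Kato inequality over $\O_i\times(0,t)$ and summing over $i=1,2$ produces, besides the wanted $\int\phi_i(u-v)^+ dx\bigl|_0^t$, three boundary contributions that I must show are $\le 0$: one at the interface $\{x=0\}$ and two at $x=\pm1$. At the interface, the quantities $\mathrm{sgn}^+(u_i-v_i)\bigl[F_i^u(0,t)-F_i^v(0,t)\bigr]$ assemble into the correct signed jump thanks to $F_1(0,\cdot)=F_2(0,\cdot)$ for both solutions; but the crucial input is the monotonicity of the graph transmission: since $\t\pi_1(u_1)\cap\t\pi_2(u_2)\neq\emptyset$ and $\t\pi_1(v_1)\cap\t\pi_2(v_2)\neq\emptyset$, one has either $u_1\ge v_1$ and $u_2\ge v_2$ or $u_1\le v_1$ and $u_2\le v_2$, hence $\mathrm{sgn}^+(u_1-v_1)=\mathrm{sgn}^+(u_2-v_2)$, which provides the sign compatibility needed to rewrite the interface contribution as a nonpositive quantity (this is the continuous analogue of Lemma~\ref{monotony_d_lem1}). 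At the outer boundaries, the Godunov-type structure of $G_i$ -- nondecreasing in the first, nonincreasing in the second argument, consistent with $f_i$ -- together with the boundary condition $F_1(-1,t)=G_1(\uu(t),u(-1,t))$ (and its analogue at $x=1$) immediately gives a nonpositive contribution, exactly as in the Vovelle-type treatment \cite{Vov02} of boundary conditions for monotone schemes.

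The main obstacle is the rigorous justification of the interface analysis: the weak formulation only records one global integration by parts, so extracting the individual traces $F_i(0^\pm,t)$ and checking that the signed jump has the correct sign requires localizing the doubling argument up to $\{x=0\}$, using $\phii(u)\in W^{1,\infty}(\O_i)$ to perform integration by parts up to the interface, and then invoking the monotonicity-of-graph argument above to kill the boundary term. Once all three contributions are shown to be $\le 0$, one obtains $\sum_i\int_{\O_i}\phi_i(u-v)^+(x,t)\,dx \le \sum_i\int_{\O_i}\phi_i(u_0-v_0)^+(x)\,dx$ for a.e. $t\in[0,T]$; exchanging the roles of $u$ and $v$ yields the $(\cdot)^-$ version, and continuity in time upgrades ``a.e.'' to ``every''. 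Uniqueness follows by taking $u_0=v_0$.
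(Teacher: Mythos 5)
Your proof of the $L^1$-contraction itself follows essentially the same route as the paper: an interior Kato inequality obtained by doubling variables inside each $\O_i$, followed by a localization at the interface and at $x=\pm1$ in which the monotonicity of the graph transmission condition (forcing ${\rm sign}_+(u_1-v_1)={\rm sign}_+(u_2-v_2)$) and the monotonicity of $G_i$ make the boundary contributions nonpositive. The paper implements the localization not through pointwise traces of the fluxes (which are only $L^\infty$, so such traces need not exist) but through the cut-off functions $\rho^\eps_\alpha$ and the subtracted weak formulations, combined with a decomposition of $(0,T)$ into the sets where the traces of $u$ and $v$ are ordered one way or the other; the $L^\infty$ flux bound enters exactly where you use it, namely to extend the limiting identity from smooth $\theta$ to $\theta\chi_{E_{u>v}}$. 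So on this part your sketch matches the paper's argument, including its one delicate step, which you correctly identify.

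The genuine gap is the time-continuity $u,v\in C([0,T];L^p(\O))$. You propose to deduce it from ``$\phi_i\partial_t u\in L^\infty(0,T;W^{-1,\infty}(\O_i))$ plus interpolation'' once the contraction is known, but that argument only yields continuity with values in a negative-order space (equivalently weak-$\star$ continuity in $L^\infty$), not strong $L^p$ continuity; and the scheme is circular, because the contraction ``for every $t\in[0,T]$'' is precisely what one obtains from the a.e.-in-$t$ statement by already knowing continuity --- the paper needs $u,v\in C([0,T];L^1(\O))$ in order to legitimize the test function $\theta=\chi_{[0,t)}$ in the final inequality \eqref{bT}. The paper resolves this by observing that bounded-flux weak solutions are entropy solutions and invoking an independent time-continuity theorem for entropy solutions of degenerate parabolic equations; some such external (or genuinely independent) argument is required here, and your proposal does not supply one.
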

Obtaining a $L^1$-contraction principle for a nonlinear parabolic equation is classical. We refer for example to \cite{AL83,GMT94,Otto96,Car99,MPT02,MV02,BP05} for the case of homogeneous domains, and for 
boundary conditions of Dirichlet or Neumann type. We have to adapt the proof of the $L^1$-contraction principle to our problem, and thus particularly to the boundary conditions and to the transmission conditions at the interface.

We need to introduce the \emph{cut-off} functions $\rho_{\a}^{\eps}\in C^{0,1}(\O,\R^+)$ defined by 
$$
\rho_{\a}^{\eps}(x) = \left(\frac{\eps - |x-\a|}{\eps}\right)^+ .
$$

\begin{lem}\label{lem_interf}
For all $\theta\in \Dd^+([0,T))$,
$$
\liminf_{\eps\to 0} \int_0^T \theta(t)\sum_{i=1,2} \int_{\O_i} \left( {\rm sign}_\pm(u-v) (f_i(u) - f_i(v)) - 
\partial_x \left(\phii(u) - \phii(v)  \right)^\pm \right) \partial_x \rho_0^\eps(x) dxdt \ge 0.
$$
\end{lem}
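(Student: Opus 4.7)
Focus on the $+$ case, the $-$ case being symmetric. Since $u$ and $v$ are bounded-flux solutions, $\phi_i(u)$ and $\phi_i(v)$ are Lipschitz in $x$ uniformly in $t$, so $u(\cdot,t)$ and $v(\cdot,t)$ admit continuous representatives on each $\O_i$ with well-defined interface traces $u_i(t), v_i(t)$ for a.e.\ $t$. The Stampacchia chain rule, combined with the strict monotonicity of $\phi_i$ on $[0,1]$ (which forces $\{u>v\}=\{\phi_i(u)>\phi_i(v)\}$ a.e.), yields
\[
\partial_x(\phi_i(u)-\phi_i(v))^+ = \mathbf{1}_{u>v}\,\partial_x(\phi_i(u)-\phi_i(v))\qquad\text{a.e.,}
\]
and since $\mathrm{sign}_+(u-v)=\mathbf{1}_{u>v}$, the integrand on $\O_i$ collapses to $\mathbf{1}_{u>v}[F_i(u)-F_i(v)]$, where $F_i=f_i-\partial_x\phi_i\in L^\infty(\OiT)$ is the bounded flux.

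Substituting $\partial_x\rho_0^\eps=\eps^{-1}(\mathbf{1}_{(-\eps,0)}-\mathbf{1}_{(0,\eps)})$, splitting $F_i=f_i-\partial_x\phi_i$ and integrating the diffusive part via the fundamental theorem of calculus, the $t$-integrand rewrites as
\begin{align*}
\Theta_\eps(t) = & \frac{1}{\eps}\int_{-\eps}^{0}\mathbf{1}_{u>v}(f_1(u)-f_1(v))\,dx - \frac{1}{\eps}\int_{0}^{\eps}\mathbf{1}_{u>v}(f_2(u)-f_2(v))\,dx \\
& + \frac{h_1(-\eps,t)-h_1(0^-,t)}{\eps} + \frac{h_2(\eps,t)-h_2(0^+,t)}{\eps},
\end{align*}
with $h_i(x,t)=(\phi_i(u)-\phi_i(v))^+(x,t)\ge 0$. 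The bracket is bounded uniformly in $\eps$ by the Lipschitz constants of $f_i$, $\phi_i(u)$, $\phi_i(v)$, so Fatou's lemma reduces the proof to showing $\liminf_\eps\Theta_\eps(t)\ge 0$ for a.e.\ $t$.

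Fix such a $t$, at which the graph conditions $\tilde\pi_1(u_1)\cap\tilde\pi_2(u_2)\ne\emptyset$ and $\tilde\pi_1(v_1)\cap\tilde\pi_2(v_2)\ne\emptyset$ hold. The monotone parametrization of these relations exploited in the proof of Lemma~\ref{lem_trans_d} implies that $(u_1-v_1)$ and $(u_2-v_2)$ have the same sign. In the generic case where $u>v$ (respectively $u<v$) strictly in a one-sided neighbourhood of $x=0$ on both sides of the interface, $\mathbf{1}_{u>v}\equiv 1$ (resp.\ $\equiv 0$) there by continuity; the $f_i$ and $h_i$ contributions then recombine into $\tfrac{1}{\eps}\int(F_i(u)-F_i(v))\,dx$ on each side, and the interface flux continuity $F_1(u)(0^-,t)=F_2(u)(0^+,t)$ (a weak-trace identity encoded in the weak formulation \eqref{weak_formulation}, well defined for bounded-flux solutions since the space-time vector field $(F_i(u),\phi_i u)\in L^\infty(\OiT)^2$ is divergence-free in $\O_i\times(0,T)$) forces $\Theta_\eps(t)\to 0$.

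The remaining degenerate configurations --- $u_1=v_1\in\{0,1\}$ with $u_2\ne v_2$, or the symmetric one --- can only occur at the capillary endpoints (by the strict monotonicity of $\pi_i$ on $(0,1)$). There one exploits the endpoint degeneracies $f_i(0)=0$ and $f_i(1)=q$ from Property~\ref{prop_fonc}, together with the one-sided sign of $\phi_i(u)-\phi_i(v)$ near the degenerate boundary (where $\phi_i(u_i)=\phi_i(v_i)\in\{0,\phi_i(1)\}$), in the spirit of Lemma~\ref{monotony_d_lem1}; the non-negative $h_i$-contributions from the opposite side absorb the remaining terms, yielding the required non-negative $\liminf$. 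The principal obstacle throughout is the lack of pointwise continuity of $F_i$ in $x$: since $F_i$ is only $L^\infty$, the spatial averages $\tfrac{1}{\eps}\int F_i\,dx$ need not converge pointwise at the single point $x=0$ by Lebesgue differentiation alone. This is circumvented by the weak-trace interpretation of $F_i$ on $\{x=0\}$ (via the divergence-free structure of $(F_i(u),\phi_i u)$) coupled with the case-by-case $\liminf$ analysis above, which only ever needs inequalities in the favourable direction.
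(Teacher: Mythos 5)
Your overall architecture is sound and shares the paper's two key ingredients: the splitting of the interface analysis according to the sign of the traces $u_i-v_i$, and the observation that the monotonicity of the graph conditions $\t\pi_1(\cdot)\cap\t\pi_2(\cdot)\neq\emptyset$ forces $(u_1-v_1)$ and $(u_2-v_2)$ to have the same sign. The favourable cases (vanishing traces of $(\phii(u)-\phii(v))^+$ and of ${\rm sign}_+(u-v)(f_i(u)-f_i(v))$) are handled as in the paper. But there is a genuine gap in the ``generic'' case $u_1>v_1$, $u_2>v_2$, and you have in fact put your finger on it yourself without resolving it. After reducing via Fatou to a \emph{pointwise-in-$t$} statement $\liminf_\eps\Theta_\eps(t)\ge 0$, you need, for a.e.\ fixed $t$, that the one-sided spatial averages $\frac1\eps\int_{-\eps}^{0}(F_1(u)-F_1(v))\,dx$ and $\frac1\eps\int_{0}^{\eps}(F_2(u)-F_2(v))\,dx$ converge to equal limits. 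The flux $F_i$ is only $L^\infty$; the term $\frac1\eps\int_{-\eps}^0\partial_x\phii(u)\,dx=\frac{\phii(u)(0^-,t)-\phii(u)(-\eps,t)}{\eps}$ is a one-sided difference quotient of a Lipschitz function at a \emph{fixed} endpoint and need not converge at all. The weak normal trace of the divergence-free field $(F_i(u),\phi_i u)$, which is what the weak formulation actually provides, identifies $F_1(0^-,\cdot)$ with $F_2(0^+,\cdot)$ only after integration against test functions in $t$ (i.e.\ in a weak/distributional sense in time); it does not upgrade the spatial averages to pointwise-a.e.-in-$t$ convergence, which is exactly what your Fatou reduction requires. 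So ``flux continuity forces $\Theta_\eps(t)\to 0$'' is not justified, and the subsequent sentence about degenerate configurations being ``absorbed'' is too vague to repair this.

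The paper circumvents precisely this obstruction by never localizing in time: it tests the difference of the two weak formulations against $\theta(t)\rho_0^\eps(x)$ to get $\lim_\eps\int_0^T\theta\sum_i\int_{\O_i}\bigl((f_i(u)-f_i(v))-\partial_x(\phii(u)-\phii(v))\bigr)\partial_x\rho_0^\eps\,dx\,dt=0$, uses the uniform $L^\infty$ flux bound and a density argument to allow $\theta\in L^1(0,T)$, and then takes $\theta\chi_{E_{u>v}}$. Writing $a=a^+-a^-$, the positive-part integral equals the negative-part integral up to $o(1)$, and on $E_{u>v}$ the graph monotonicity makes the \emph{negative}-part traces vanish, so its $\liminf$ is $\ge 0$ by the easy argument; the conclusion for the positive part follows. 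If you want to salvage your pointwise strategy you would have to prove strong (a.e.-$t$) one-sided traces of the fluxes, which is not available here; otherwise you should replace the Fatou step by the time-integrated argument. A minor additional point: you write $\phi_i(u)$ (the porosity, a constant) where you mean $\varphi_i(u)$.
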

\begin{proof} 
We define the subsets  of $(0,T)$ 
$$
E_{u>v} = \Big\{ t\in (0,T) \left| \  u_1(t) > v_1(t) \textrm{ or } u_2(t) > v_2(t) \Big\} \right. ,
$$
$$
E_{u\le v} = \left( E_{u>v}\right)^c  = \Big\{ t\in (0,T) \left| \ u_1(t) \le v_1(t) \textrm{ and } u_2(t) \le v_2(t) \Big\} \right. .
$$
Since the trace on $\{x=0\}$ of the function ${\rm sign}_+(u-v)(f_i(u) -f_i(v))$ is equal to $0$ for all 
$t\in E_{u \le v}$, it is easy to check that
\begin{equation*}\label{il13}
\lim_{\eps\to 0}   \int_{E_{u \le  v}}  \theta \sum_{i=1,2} \int_{\O_i} {\rm sign}_+(u-v)(f_i(u)-f_i(v))\partial_x \rho_0^\eps dxdt = 0.
\end{equation*}
Thanks to the fact that the trace of $\left(\phii(u)-\phii(v)\right)^+$ is equal to $0$ on the interface, one has also,  
\begin{eqnarray*}
&\ds\liminf_{\eps\to 0}   \int_{E_{u \le v}}  \theta \sum_{i=1,2} \int_{\O_i} \partial_x \left( \phii(u) - \phii(v)  \right)^+\partial_x \rho_0^\eps dxdt \ge 0. &     \end{eqnarray*}
  This particularly implies that 
 \begin{equation}\label{il15}
 \liminf_{\eps\to 0} \int_{E_{u \le v}}  \theta \sum_{i=1,2} \int_{\O_i} \left[ {\rm sign}_+(u-v) (f_i(u)-f_i(v)) - \partial_x \left( \phii(u) - \phii(v)  \right)^+\right] \partial_x \rho_0^\eps dxdt \ge 0.
 \end{equation}

Since $u,v$ are two weak solutions, subtracting their corresponding weak formulation~\eqref{weak_formulation} for the test function $\psi(x,t) = \theta(t) \rho_0^\eps(x)$ leads to
\begin{eqnarray*}
&\ds \int_0^T \sum_{i=1,2} \int_{\O_i} \phi_i (u-v)  \rho_0^\eps \partial_t \theta dxdt + 
\sum_{i=1,2} \int_{\O_i} \phi_i (u_0-v_0)   \rho_0^\eps \theta(0) dx     &\\
&\ds    + \int_0^T \theta \sum_{i=1,2} \int_{\O_i} \left( (f_i(u)-f_i(v)) - \partial_x (\phii(u)-\phii(v)) \right) \partial_x \rho_0^\eps  dxdt =0.&
\end{eqnarray*}
Since $\rho_0^\eps$ tends to $0$ in $L^1(\O)$ as $\eps \to 0$, one has:
$$
\lim_{\eps\to 0} 
\ds  \int_0^T \sum_{i=1,2} \int_{\O_i} \phi_i (u-v)  \rho_0^\eps \partial_t \theta dxdt + 
\sum_{i=1,2} \int_{\O_i} \phi_i (u_0-v_0)    \rho_0^\eps \theta(0) dx 
 =0, 
$$
thus 
\begin{equation}\label{il05}
\lim_{\eps\to 0}  \int_0^T \theta \sum_{i=1,2} \int_{\O_i} \left( (f_i(u)-f_i(v)) - \partial_x (\phii(u)-\phii(v)) \right) \partial_x \rho_0^\eps dxdt =0.
\end{equation}
Thanks to the $L^\infty(\OT)$ bound on the fluxes, one has 
$$
\left| \int_0^T \theta \sum_{i=1,2} \int_{\O_i} \left( (f_i(u)-f_i(v)) - \partial_x (\phii(u)-\phii(v)) \right) \partial_x \rho_0^\eps dxdt \right| \le C \| \theta \|_{L^1(0,T)},
$$
then, using a density argument,  \eqref{il05} holds for all $\theta\in L^1(0,T)$.

Replacing $\theta$ by $\theta \chi_{E_{u>v}}$ in \eqref{il05}, and  splitting the positive and the negative parts $a = a^+ - a^-$, we obtain
\begin{eqnarray}
&\ds   \int_{E_{u>v}} \theta \sum_{i=1,2} \int_{\O_i} \left( {\rm sign}_+(u-v)(f_i(u)-f_i(v)) - \partial_x (\phii(u)-\phii(v))^+ \right) \partial_x \rho_0^\eps dxdt    & \nn\\
= &\ds    \int_{E_{u>v}}  \theta \sum_{i=1,2} \int_{\O_i} \left( {\rm sign}_-(u-v)(f_i(u)-f_i(v)) - \partial_x (\phii(u)-\phii(v))^- \right) \partial_x\rho_0^\eps dxdt     & + r(\eps),
\label{il10}
\end{eqnarray}
with $$\lim_{\eps\to 0} r(\eps) =0.$$
For almost every $t\in E_{u>v}$, it follows from the monotonicity of the graph relations for the capillary pressure 
$$
\t\pi_1(u_1) \cap \t\pi_2(u_2) \neq \emptyset, \qquad \t\pi_1(v_1) \cap \t\pi_2(v_2) \neq \emptyset,
$$
that $t$ belongs to $E_{u\ge v} =  \Big\{ t\in (0,T) \left| \ u_1(t) \ge v_1(t) \textrm{ and } u_2(t) \ge v_2(t) \Big\} \right.$. So we obtain exactly in the same way that for \eqref{il15}, that 
$$
\liminf_{\eps\to 0} \int_{E_{u > v}}  \theta \sum_{i=1,2} \int_{\O_i} \left[ {\rm sign}_-(u-v) (f_i(u)-f_i(v)) - \partial_x \left( \phii(u) - \phii(v)  \right)^-\right]  \partial_x \rho_0^\eps dxdt \ge 0.
$$
It follows directly from \eqref{il10} that 
\begin{equation}\label{il30}
\liminf_{\eps\to 0} \int_{E_{u > v}}  \theta \sum_{i=1,2} \int_{\O_i} \left[ {\rm sign}_+(u-v) (f_i(u)-f_i(v)) - \partial_x \left( \phii(u) - \phii(v)  \right)^+\right]  \partial_x \rho_0^\eps dxdt \ge 0.
\end{equation}
Adding \eqref{il15} and \eqref{il30} achieves the proof of Lemma \ref{lem_interf}.
\end{proof}

%
\begin{lem}\label{lem_boundary2}
For all $\theta\in \Dd^+([0,T))$, 
\begin{equation}\label{bl3}
\liminf_{\eps\to 0} \int_0^T \theta \int_{\O_1} \left( {\rm sign}_\pm (u-v) \left( f_1(u) - f_1(v) \right) - \partial_x ( \varphi_1(u) - \varphi_1(v) )^\pm \right) \partial_x \rho^\eps_{-1} dxdt \ge 0, 
\end{equation}
\begin{equation}\label{bl4}
\liminf_{\eps\to 0} \int_0^T \theta \int_{\O_2} \left( {\rm sign}_\pm (u-v) \left( f_2(u) - f_2(v) \right) - \partial_x ( \varphi_2(u) - \varphi_2(v) )^\pm \right) \partial_x \rho^\eps_{1} dxdt \ge 0.
\end{equation}
\end{lem}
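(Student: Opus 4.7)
Following the template of Lemma~\ref{lem_interf}, I partition $(0,T)=E_\le \sqcup E_>$ with $E_\le=\{t:u(-1,t)\le v(-1,t)\}$ and $E_>$ its complement, and I focus on \eqref{bl3} with the ``$+$'' sign; the other three cases are symmetric (the right-boundary \eqref{bl4} by using $\rho_1^\eps$ and the monotonicity of $G_2$, and the ``$-$'' case by partitioning according to $\{u(-1)\ge v(-1)\}$ and $\{u(-1)<v(-1)\}$ and using the opposite sign coming from $G_1$'s monotonicity). Definition~\ref{def_bounded} gives $\varphi_1(u),\varphi_1(v)\in L^\infty(0,T;W^{1,\infty}(\O_1))$, so traces at $x=-1$ and the quantities $(\varphi_1(u)-\varphi_1(v))^{\pm}$ are Lipschitz in $x$ uniformly in $t$. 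Denote the ``$+$''-integrand by $I_+$ and introduce the ``co-integrand''
$$J:={\rm sign}_-(u-v)(f_1(u)-f_1(v))+\partial_x(\varphi_1(u)-\varphi_1(v))^-,$$
so that $(f_1(u)-f_1(v))-\partial_x(\varphi_1(u)-\varphi_1(v)) = I_+ + J$ a.e.

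On $E_\le$, the trace at $x=-1$ of both factors composing $I_+$ vanishes. Using $\partial_x\rho_{-1}^\eps=-\eps^{-1}\chi_{(-1,-1+\eps)}$ one obtains
$$\int_{\O_1} I_+\,\partial_x\rho_{-1}^\eps\,dx = -\frac{1}{\eps}\int_{-1}^{-1+\eps}{\rm sign}_+(u-v)(f_1(u)-f_1(v))\,dx + \frac{1}{\eps}(\varphi_1(u)-\varphi_1(v))^+(-1+\eps,t),$$
where the first term tends to $0$ by Lebesgue differentiation combined with the vanishing trace, while the second is $\ge 0$. Fatou's lemma, applied after multiplying by $\theta\ge 0$ and integrating over $E_\le$, yields $\liminf_\eps\int_{E_\le}\theta\int_{\O_1}I_+\,\partial_x\rho_{-1}^\eps\,dxdt\ge 0$.

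On $E_>$ I proceed in two sub-steps. First, subtracting the weak formulations for $u$ and $v$ against $\psi(x,t)=\theta(t)\rho_{-1}^\eps(x)$---whose support lies in $\overline{\O_1}$ for $\eps<1$ (killing the $\O_2$-bulk and right-boundary contributions), has $L^1$-norm $O(\eps)$ (killing the time-derivative and initial-data terms), and satisfies $\psi(-1,t)=\theta(t)$---yields
$$\int_0^T\!\theta\int_{\O_1}\!\bigl[(f_1(u)-f_1(v))-\partial_x(\varphi_1(u)-\varphi_1(v))\bigr]\partial_x\rho_{-1}^\eps\,dxdt = -\!\int_0^T\!\theta\bigl[G_1(\uu,u(-1,t))-G_1(\uu,v(-1,t))\bigr]dt + o_\eps(1).$$
The bounded-flux bound makes the left-hand side $O(\|\theta\|_{L^1})$ uniformly in $\eps$, so by density the identity persists with $\theta$ replaced by $\theta\chi_{E_>}$. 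Since $G_1$ is non-increasing in its second argument and $u(-1,t)>v(-1,t)$ on $E_>$, the resulting right-hand side is $\ge 0$, so the limit of the left-hand side on $E_>$ is $\ge 0$. Second, on $E_>$ the traces at $x=-1$ of both ${\rm sign}_-(u-v)(f_1(u)-f_1(v))$ and of $(\varphi_1(u)-\varphi_1(v))^-$ vanish, and the same direct computation as above gives $\liminf_\eps\int_{E_>}\theta\int_{\O_1}(-J)\,\partial_x\rho_{-1}^\eps\,dxdt\ge 0$. Writing $I_+=[(f_1(u)-f_1(v))-\partial_x(\varphi_1(u)-\varphi_1(v))]+(-J)$ and invoking the superadditivity of $\liminf$, these two bounds on $E_>$ combine to $\liminf_\eps\int_{E_>}\theta\int_{\O_1}I_+\,\partial_x\rho_{-1}^\eps\,dxdt\ge 0$, which together with the $E_\le$ estimate proves \eqref{bl3}.

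The main obstacle lies in the trace computation that underpins both the $E_\le$-bound and the co-integrand bound on $E_>$: converting $\eps^{-1}\!\int_{-1}^{-1+\eps}\partial_x w^{\pm}\,dx = \eps^{-1}[w^{\pm}(-1+\eps)-w^{\pm}(-1)]$ into a sign-definite remainder is possible only because the bounded-flux hypothesis provides the Lipschitz regularity of $w^{\pm}=(\varphi_1(u)-\varphi_1(v))^{\pm}$ in $x$, uniformly in $t$---an argument that would collapse under merely $H^1$-regularity of $\varphi_1(u),\varphi_1(v)$.
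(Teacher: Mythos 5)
Your proof is correct and follows the same skeleton as the paper's: partition $(0,T)$ according to the sign of the boundary traces, handle one piece by a direct trace computation (zero trace of the signed convection term plus the favourable sign of $\eps^{-1}[w^\pm(-1+\eps)-w^\pm(-1)]$), and handle the other piece by testing the weak formulation with $\theta\rho^\eps_{\pm1}$, extending to $\theta\chi_{E_>}$ by density thanks to the $L^\infty$ flux bound, and invoking the monotonicity of $G_i$. The one place where you genuinely diverge is the final step on $E_>$: the paper passes from the \emph{unsigned} flux difference to the \emph{signed} integrand by observing that, for a.e.\ $t\in E_>$, the continuity of $u(\cdot,t),v(\cdot,t)$ (inherited from the Lipschitz bound on $\varphi_i(u),\varphi_i(v)$) gives a neighbourhood of the boundary on which $u>v$, so the two integrands coincide there and the discrepancy vanishes by dominated convergence; you instead decompose the signed integrand as the unsigned one plus a co-integrand and bound the co-integrand's contribution by the same trace computation used on $E_\le$. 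Your route is arguably more uniform across the two pieces, while the paper's avoids any algebra with the complementary sign. One caveat: the identity $(f_1(u)-f_1(v))-\partial_x(\varphi_1(u)-\varphi_1(v))=I_++J$ with $J={\rm sign}_-(u-v)(f_1(u)-f_1(v))+\partial_x(\varphi_1(u)-\varphi_1(v))^-$ holds only under the convention ${\rm sign}_-(w)=\chi_{\{w<0\}}$; under the convention the paper implicitly uses in \eqref{il10} (namely ${\rm sign}_-(w)=-\chi_{\{w<0\}}$, so that the unsigned integrand equals the ``$+$'' one \emph{minus} the ``$-$'' one), your identity acquires the extra term $2\chi_{\{u<v\}}(f_1(u)-f_1(v))$. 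This is harmless --- that term also has vanishing boundary trace on $E_>$, so its contribution against $\partial_x\rho^\eps_{-1}$ tends to $0$ --- but you should fix the convention explicitly so the decomposition is exact.
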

\begin{proof}
For the sake of simplicity, we will only prove 
$$
\liminf_{\eps\to 0} \int_0^T \theta \int_{\O_2} \left( {\rm sign}_+ (u-v) \left( f_2(u) - f_2(v) \right) - \partial_x ( \varphi_2(u) - \varphi_2(v) )^+ \right) \partial_x \rho^\eps_{1} dxdt \ge 0,
$$
but all the steps of the proof can be extended to the other cases. We denote by $F_{u>v}$ and $F_{u\ge v}$ the subsets of $(0,T)$ given by
$$
F_{u>v} = \left\{ t\in (0,T)\ | \ u(1,t) > v(1,t) \right\},\qquad F_{u\le v} =\left( F_{u>v}\right)^c= \left\{ t\in (0,T)\ | \ u(1,t) \le v(1,t) \right\}.
$$
Let $\eps>0$.
For almost every $t\in F_{u\le v}$, one has 
$$
\int_{\O_2} \partial_x \left( \varphi_2(u)(x,t) - \varphi_2(v)(x,t) \right)^+ \partial_x \rho^\eps_{1}(x) dx \le 0.
$$
Then, using the fact that for almost every $t\in F_{u\le v}$, the trace of ${\rm sign}_+(u(\cdot,t) - v(\cdot,t))\left( f_2(u)(\cdot,t) - f_2(v)(\cdot,t)  \right)$  on $\{ x=1\}$ is equal to $0$,
\begin{equation}\label{bl25}
\liminf_{\eps\to 0}\int_{F_{u\le v}} \!\!\!\theta   \int_{\O_2}  \left( {\rm sign}_+(u - v)\left( f_2(u) - f_2(v)  \right) - \partial_x (\varphi_2(u)-\varphi_2(v) )^+
\right) \partial_x \rho_1^\eps dx \ge 0.
\end{equation}
We deduce from the weak formulation that for all $\theta\in \Dd([0,T))$, 
\begin{equation}\label{bl30}
\lim_{\eps\to 0} \int_0^T \theta \left( \int_{\O_2} \left(f_2(u) - \partial_x  \varphi_2(u)\right) \partial_x \rho^\eps_1 dx  -G_2(u(1,t),\ou(t) )\right) dt =0.
\end{equation}
Since the fluxes $f_2(u) - \partial_x \varphi_2(u)$ and $f_2(v) - \partial_x \varphi_2(v)$ belong to $L^\infty(\O_2\times(0,T))$, a density argument, which has already been used during the proof of Lemma \ref{lem_interf}, allows us to claim that 
\eqref{bl30} still holds for any $\theta\in L^1(0,T)$. So, it particularly holds if we replace $\theta$ by $\theta \chi_{F_{u>v}}$
This leads to 
$$
\lim_{\eps\to 0}  \int_{F_{u<v}} \theta \int_{\O_2} \Big(f_2(u)-f_2(v) - \partial_x (\varphi_2(u)-\varphi_2(v))\Big) \partial_x \rho^\eps_1 dxdt =   \int_{F_{u>v}} \theta(t) \left( G_2(u(1,t),\ou(t))-G_2(v(1,t),\ou(t)) \right) dt.
$$
It follows from the monotonicity of $G_2$ that
$$ \forall t\in F_{u>v},\qquad  
G_2(u(1,t),\ou(t)) \ge G_2(v(1,t),\ou(t)),
$$
thus 
\begin{equation}\label{bl40}
\liminf_{\eps\to 0}  \int_{F_{u<v}} \theta \int_{\O_2} \Big(f_2(u)-f_2(v) - \partial_x (\varphi_2(u)-\varphi_2(v))\Big) \partial_x \rho^\eps_1 dxdt \ge 0.
\end{equation}
In order to conclude the proof of Lemma \ref{lem_boundary2}, it only remains to check that 
\begin{eqnarray}
&\ds \liminf_{\eps\to 0}  \int_{F_{u<v}} \theta \int_{\O_2} \Big(f_2(u)-f_2(v) - \partial_x (\varphi_2(u)-\varphi_2(v))\Big) \partial_x \rho^\eps_1 dxdt &\nn\\
=&\ds \liminf_{\eps\to 0}  \int_{F_{u<v}} \theta \int_{\O_2} \Big({\rm sign}_+(u-v)(f_2(u)-f_2(v)) - \partial_x (\varphi_2(u)-\varphi_2(v))^+\Big) \partial_x \rho^\eps_1 dxdt . & \label{bl50}
\end{eqnarray}
Since $\varphi_2^{-1}$ is a continuous function, $u(\cdot,t)$ can be supposed to be continuous on $\O_2$ for almost every $t$ in $(0,T)$. Particularly, for almost every $t\in F_{u>v}$, there exists a neighborhood $\Vv_t$ of $\{x=1\}$ such that $u(\cdot,t)>v(\cdot,t)$ for all $x\in \Vv_t$. On $\Vv_t$, one has 
$$
R(x,t) = \Big(f_2(u)-f_2(v) - \partial_x (\varphi_2(u)-\varphi_2(v)) \Big)- \Big({\rm sign}_+(u-v)(f_2(u)-f_2(v)) - \partial_x (\varphi_2(u)-\varphi_2(v))^+\Big) = 0.
$$
Then, for almost every $t\in F_{u>v}$, 
$$
\lim_{\eps\to 0} \int_{\O_2} R(x,t) \partial_x \rho^\eps_1(x) dx =0.
$$
Moreover, since the fluxes $f_2(u) - \partial_x \varphi_2(u)$ and $f_2(v) - \partial_x \varphi_2(v)$ belong to 
$L^\infty(\O_2\times(0,T))$, there exists $C>0$ not depending on $\eps$ such that  for almost every $t$, 
$$
\left|  \int_{\O_2} R(x,t) \partial_x \rho^\eps_1(x) dx \right|\le C.
$$
We deduce from the dominated convergence theorem that 
$$
\lim_{\eps\to 0} \int_0^T \int_{\O_2} R(x,t) \partial_x \rho^\eps_1(x) dx = 0.
$$
This particularly implies that \eqref{bl50} holds. This achieves the proof of Lemma \ref{lem_boundary2}.
\end{proof}

\noindent{\it Proof of the Theorem \ref{unicite_bounded}.}
First, since $u$ and $v$ are weak solutions to a parabolic equation, they are also entropy solutions 
(see \cite{GMT94}, \cite{Car99}), and it has been proven in \cite{Cont_L1} that $u$ and $v$ belong to 
$C([0,T],L^p(\O))$, in the sense that there exists $\t u,\t v \in  C([0,T],L^p(\O))$ such that 
$u = \t u$, $v=\t v$ almost everywhere in $\OT$.

Let $u$ and $v$ be two weak solutions, then some classical computations, based on the doubling variable technique applied on both the time and the space variable (see e.g. \cite{GMT94}, \cite{Car99})  yields yields that 
for any $\psi\in\Dd^+(\O_i\times[0,T))$ 
\begin{eqnarray}
&\ds \int_0^T \int_{\O_i}\phi_i(u(x,t)-v(x,t))^\pm
\partial_t\psi(x,t)dxdt\nn 
+\ds \int_{\O_i}\phi_i (u_0(x)-v_0(x))^\pm \psi(x,0)dx &\nn\\
&+\ds  \int_0^T  \int_{\O_i}
{\rm sign}_{\pm}(u(x,t) - v(x,t))\left(f_i(u)(x,t)-f_i(v)(x,t)\right)
\partial_x\psi(x,t)dxdt&\nn \\
&-\ds   \int_0^T  \int_{\O_i} \partial_x(\phii(u)(x,t)-\phii(v)(x,t))^\pm \partial_x\psi(x,t)dxdt \ge 0.
\label{entro_i} &
\end{eqnarray}
Let $\theta\in \Dd^+([0,T))$, then summing \eqref{entro_i} with respect to $i=1,2$,
choosing $$\psi (x,t) = \theta(t) \left(1 - \rho^\eps_{-1}(x)- \rho^\eps_{0}(x)- \rho^\eps_{1}(x)\right)$$ 
as test function, and letting $\eps$ tend to $0$ leads to, thanks to Lemmata \ref{lem_interf} and \ref{lem_boundary2} : 
\begin{equation}\label{bT}
\int_0^T\partial_t\theta(t)\sum_{i=1,2} \int_{\O_i}\phi_i(u(x,t)-v(x,t))^\pm dxdt + \sum_{i=1,2} \int_{\O_i} \phi_i (u_0(x) - v_0(x))^\pm \theta(0)dx \ge 0.
\end{equation}
Since $u,v$ belong to $C([0,T];L^1(\O))$, the relation \eqref{bT} still holds for any $\theta\in BV(0,T)$ with $\theta(T^+) = 0$.
Let $t\in [0,T]$, we choose  $\theta = \chi_{[0,t)}$ in  \eqref{bT},  obtaining this way the $L^1$-contraction and comparison principle stated in the Theorem \ref{unicite_bounded}.
\hfill$\square$
%
%
\section{Solutions obtained as limit of approximations}\label{SOLA_section}

We aim in this section to extend the existence-uniqueness result obtained in Theorems~\ref{thm_bounded} 
and \ref{unicite_bounded} for any initial data $u_0\in L^\infty(\O)$, $0\le u_0 \le 1$ a.e.. We are unfortunately not able to prove the uniqueness of the weak solution to the problem~(\ref{P}) 
in such a general case, but we are able to prove the existence and the uniqueness of the solution obtained as limit of 
approximation by bounded flux solution. Moreover, this limit is the weak solution obtained via the convergence of the implicit 
scheme~(\ref{schema_1}) studied previously.
\begin{Def}
\label{SOLA_def} 
A function $u$ is said to be a {\bf SOLA} (solution obtained as limit of approximation) to the problem~(\ref{P}) if it fulfils:
\begin{itemize}
\item $u$ is a weak solution to the problem~(\ref{P}),
\item there exists a sequence $(u_\nu)_{\nu\in \N}$ of bounded flux solutions such that 
$$u_n \rightarrow u \text{ in } C([0,T];L^1(\O)),  \text{ as } n\rightarrow +\infty.$$
\end{itemize}
\end{Def}
\begin{thm}\label{SOLA_thm}
Let $u_0\in L^\infty(\O)$, $0\le u_0 \le 1$ a.e., then there exists a unique SOLA 
$u$ to the problem~(\ref{P}) in the sense of Definition~\ref{SOLA_def}. \\
Furthermore, if $(M_p)_{p\in\N}$, $(N_p)_{p\in\N}$ are to sequences of positive integers 
tending to $+\infty$, and if $(u_{\Dd_p})_{p\in\N}$ is the corresponding sequence of discrete solutions, then 
$u_{\Dd_p}\rightarrow u$ in $L^r(\OT)$, $r\in[1,+\infty)$.
\end{thm}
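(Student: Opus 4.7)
I would proceed in three stages: construction of a candidate SOLA, uniqueness of SOLAs, and convergence of the whole discrete sequence toward it.

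\emph{Construction.} First I would approximate $u_0\in L^\infty(\O)$, $0\le u_0\le 1$, by a sequence $(u_0^\nu)_{\nu\in\N}$ satisfying Assumptions~\ref{hyps_bounded} with $u_0^\nu\to u_0$ in $L^1(\O)$: mollify $u_0$ separately on each $\O_i$ to obtain Lipschitz restrictions, then correct the two interface traces via Lemma~\ref{lem_trans_d} so as to enforce $\t\pi_1(u^\nu_{0,1})\cap\t\pi_2(u^\nu_{0,2})\neq\emptyset$. For each $\nu$, Theorems~\ref{thm_bounded}--\ref{unicite_bounded} give a unique bounded-flux solution $u^\nu$ with initial datum $u_0^\nu$, and the contraction of Theorem~\ref{unicite_bounded} yields
$$\sum_{i=1,2}\int_{\O_i}\phi_i|u^\nu(x,t)-u^\mu(x,t)|\,dx\le \sum_{i=1,2}\int_{\O_i}\phi_i|u_0^\nu-u_0^\mu|\,dx,$$
so $(u^\nu)$ is Cauchy in $C([0,T];L^1(\O))$ with limit $u\in C([0,T];L^1(\O))$. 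To verify that $u$ is a weak solution I would combine the $L^\infty$ bound with $L^1$-convergence to get $u^\nu\to u$ in every $L^r(\OT)$, $r<\infty$, establish uniform continuous analogues of the energy estimate of Proposition~\ref{prop_L2H1_d} on $\varphi_i(u^\nu)$, pass to the limit in~\eqref{weak_formulation}, and note that the closed constraint $\t\pi_1(u_1)\cap\t\pi_2(u_2)\neq\emptyset$ survives the limit as in Remark~\ref{conv_trace_rmk}.

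\emph{Uniqueness.} If $u$ and $v$ are two SOLAs with common initial datum $u_0$, with approximating bounded-flux sequences $(u^\nu)$ and $(v^\nu)$, then convergence in $C([0,T];L^1(\O))$ evaluated at $t=0$ forces $u^\nu(\cdot,0)\to u_0$ and $v^\nu(\cdot,0)\to u_0$ in $L^1(\O)$. Applying the bounded-flux contraction of Theorem~\ref{unicite_bounded} between $u^\nu$ and $v^\nu$ and letting $\nu\to\infty$ yields $u=v$.

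\emph{Convergence of the whole discrete sequence.} Let $u^\nu_{\Dd_p}$ denote the discrete solution starting from the discretization $u^\nu_{0,\Dd_p}$ of $u_0^\nu$. Since $u_0^\nu$ fulfills Assumptions~\ref{hyps_bounded}, Theorem~\ref{thm_bounded} ensures every cluster value of $(u^\nu_{\Dd_p})_p$ is a bounded-flux solution, whence by Theorem~\ref{unicite_bounded} the whole sequence converges to $u^\nu$ in $L^r(\OT)$ for all finite $r$. The discrete $L^1$-contraction of Proposition~\ref{existence_unicite_d} gives
$$\sum_{i=1,2}\int_{\O_i}\phi_i|u_{\Dd_p}(x,t)-u^\nu_{\Dd_p}(x,t)|\,dx\le \sum_{i=1,2}\int_{\O_i}\phi_i|u_{0,\Dd_p}-u^\nu_{0,\Dd_p}|\,dx,$$
whose right-hand side tends to $\sum_i\int_{\O_i}\phi_i|u_0-u_0^\nu|\,dx$ as $p\to\infty$. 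Combined with the continuous contraction $\|u^\nu-u\|_{C([0,T];L^1)}\le C\|u_0^\nu-u_0\|_{L^1}$, a triangle inequality gives $\limsup_p\|u_{\Dd_p}-u\|_{L^1(\OT)}\le 2CT\|u_0-u_0^\nu\|_{L^1(\O)}$, which vanishes as $\nu\to\infty$. The uniform $L^\infty$ bound upgrades this to convergence in every $L^r(\OT)$, $r\in[1,\infty)$.

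The hardest step will be the construction: passing to the limit in the nonlinear diffusion term $\partial_x\varphi_i(u^\nu)\partial_x\psi$ and in the interface/boundary nonlinearities requires uniform $H^1_x$ control on $\varphi_i(u^\nu)$ together with strong trace convergence at $\{x=0\}$ and at $\partial\O$, which must be established at the continuous level in the same spirit as Section~\ref{compactness}. Once this is in hand, both the uniqueness and the whole-sequence convergence are essentially corollaries of the two contraction principles, discrete (Proposition~\ref{existence_unicite_d}) and continuous (Theorem~\ref{unicite_bounded}).
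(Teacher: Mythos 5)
Your proposal is correct and follows essentially the same route as the paper: density of regular initial data, the $L^1$-contraction of Theorem~\ref{unicite_bounded} to get a Cauchy sequence of bounded-flux solutions, passage to the limit in the weak formulation via the $u_0$-independent $L^2(0,T;H^1(\O_i))$ bound on $\varphi_i$ (the paper simply invokes the already-established estimate~\eqref{phii(u)} rather than re-deriving a continuous energy estimate, which shortens your ``hardest step''), and the same triangle inequality combining the discrete and continuous contraction principles for the whole-sequence convergence.
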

\begin{proof}
The set 
$$
\mathcal{E}= \Big\{ u_0\in L^\infty(\O) \ \Big| \ 0 \le u_0 \le 1, \ \partial_x \phii(u_0)\in L^\infty(\O_i), \ \t\pi_1(u_{0,1})\cap \t\pi_2(u_{0,2}) \neq \emptyset\Big\}
$$ 
is dense in $\{ u_0\in L^\infty(\O) \ | \ 0 \le u_0 \le 1\}$ for the $L^1(\O)$-topology.
Then we can build a sequence $\left( u_{0,\nu} \right)_{\nu\in\N}$ such that 
$$
\lim_{\nu\to\infty} \left\| u_{0,\nu} - u_0 \right\|_{L^1(\O)} = 0.
$$
Let $\left( u_\nu \right)_\nu$ be the corresponding sequence of bounded flux solutions, then we deduce from the Theorem~\ref{unicite_bounded} that for all $\nu,\mu\in \N$,
\begin{equation}\label{SOLA1}
\forall t\in [0,T],\qquad \sum_{i=1,2} \int_{\O_i} \phi_i \left( u_\nu (x,t) - u_\mu(x,t) \right)^\pm dx \le \sum_{i=1,2} \int_{\O_i} \phi_i \left( u_{0,\nu} (x) - u_{0,\mu}(x) \right)^\pm dx.
\end{equation}
Then $\left( u_\nu \right)_\nu$ is a Cauchy sequence in $C([0,T];L^1(\O))$, thus it converges towards $u \in C([0,T];L^1(\O))$, and
\begin{equation}\label{SOLA12}
\forall t\in [0,T],\qquad \sum_{i=1,2} \int_{\O_i} \phi_i \left( u_\nu (x,t) - u(x,t) \right)^\pm dx \le \sum_{i=1,2} \int_{\O_i} \phi_i \left( u_{0,\nu} (x) - u_{0}(x) \right)^\pm dx.
\end{equation}

Let us now check that $u$ is a weak solution. Since $\phii$ is continuous, and since $0 \le u_\nu \le 1$ a.e., $\phii(u_\nu)$ converges in $L^2(\OiT)$ towards $\phii(u)$. The $L^2((0,T);H^1(\O_i))$ estimate \eqref{phii(u)} does not depend on $u_0$, thus, up to a subsequence, $\left(\phii(u_\nu)\right)_\nu$ converges weakly to $\phii(u)$ in $L^2((0,T);H^1(\O_i))$. It also converges strongly in $L^2((0,T);H^s(\O_i))$ for all $s\in (0,1)$. This particularly ensures the strong convergence of the traces of $\left(\phii(u_\nu)\right)_\nu$ on the interface. Since $\phii^{-1}$ is continuous, we obtain the strong convergence of the traces of $\left( u_\nu \right)_\nu$. Checking  that the set 
$$
F= \{ (a,b)\in [0,1]^2 \ | \ \t\pi_1(a) \cap  \t \pi_2(b) \neq \emptyset \} \textrm{ is closed in } [0,1]^2,
$$
the limits $u_i$ fulfill $\t\pi_1(u_1) \cap  \t \pi_2(u_2) \neq \emptyset$, and so $u$ is a weak solution, then it is a SOLA.

If $u$ and $v$ are two SOLAs associated to the initial data $u_0$ and $v_0$, we can easily prove, using the Theorem \ref{unicite_bounded} that 
\begin{equation}\label{SOLA2}
\forall t\in [0,T],\qquad \sum_{i=1,2} \int_{\O_i} \phi_i \left( u (x,t) - v(x,t) \right)^\pm dx \le \sum_{i=1,2} \int_{\O_i} \phi_i \left( u_{0} (x) - v_{0}(x) \right)^\pm dx.
\end{equation}
The uniqueness particularly follows.

Let $u_0\in L^\infty(\O)$, $0 \le u_0 \le 1$, and let $\left(u_{0,\nu}\right)_\nu\subset \mathcal{E}$ a sequence of approximate initial data tending to $u_0$ in $L^1(\O)$. We denote by $u$ the unique SOLA associated to $u_0$, and by $\left(u_\nu\right)_\nu$ the bounded flux solutions associated to $\left(u_{0,\nu}\right)_\nu$.
Let $(M_p)_{p\in \N}$, $(N_p)_{p\in \N}$ be two sequences of positive integers tending to $+\infty$. Let $p\in \N$, $\nu\in\N$, 
let $u_{\Dd_p}$ the discrete solution corresponding to  $u_0$, and let 
$u_{\nu,\Dd_p}$ the discrete solution corresponding to  $u_{0,\nu}$. 
\begin{eqnarray*}
\| u_{\Dd_p} - u \|_{L^1(\OT)} &\le&  \| u_\Dd - u_{\nu,{\Dd_p}} \|_{L^1(\OT)} + \| u_{\nu, \Dd_p} - u_\nu \|_{L^1(\OT)} + \| u_\nu - u \|_{L^1(\OT)} . 
\end{eqnarray*}
From the discrete $L^1$-contraction principle  (\ref{comp_schema_2}), and from the continuous one \eqref{SOLA12}, we have 
\begin{eqnarray*}
\| u_{\Dd_p} - u \|_{L^1(\OT)} \le&& T  \frac{\max \phi_i}{\min\phi_i} \| u_{0,\Dd} - u_{0,\nu,{\Dd_p}} \|_{L^1(\O)} + \| u_{\nu, \Dd_p} - u_\nu \|_{L^1(\OT)}  \\
& +& T\frac{\max \phi_i}{\min\phi_i} \| u_{0,\nu} - u_0 \|_{L^1(\O)}.
\end{eqnarray*}
Letting $p$ tend to  $\infty,$ it follows from the definition of $(u_{0,\nu,\Dd})$ (adapted from \eqref{u0_d}) that 
$$
\lim_{p\to\infty} \| u_{0,\Dd} - u_{0,\nu,{\Dd_p}} \|_{L^1(\O)} = \| u_{0,\nu} - u_0 \|_{L^1(\O)}.
$$
We have proven in the Theorem \ref{thm_bounded} that the sequence of discrete solutions converges, under assumption on the initial data to the unique bounded flux solution, thus 
$$
\lim_{p\to\infty} \| u_{\nu, \Dd_p} - u_\nu \|_{L^1(\OT)} = 0.
$$
This implies 
$$
\limsup_{p\to\infty} \| u_{\Dd_p} - u \|_{L^1(\OT)} \le 2T\frac{\max \phi_i}{\min\phi_i} \| u_{0,\nu} - u_0 \|_{L^1(\O)}.
$$
Letting $\nu$ tend to $\infty$ provides 
$$
\lim_{p\to\infty} \| u_{\Dd_p} - u \|_{L^1(\OT)} = 0.
$$
The convergence occurs in $L^1(\OT)$, but the uniform bound on the sequence $\left( u_{\Dd_p} \right)$ in $L^\infty(\OT)$ ensures that the convergence also take place in all the $L^p(\OT)$, for $p\in [1,\infty)$.
\end{proof}
%
%
\section{Numerical Result}\label{numer_section}

In order to illustrate this model, we use a test case developed by  Anthony Michel \cite{MCGP08}. The porous medium $\O=(0,1)$ is made of sand 
for $x\in (0,0.5) \cup (0.7,1)$, with a layer of shale for $x\in (0.5,0.7)$. 

{\bf First case:}\\
The total flow rate is equal to $0$, since $f_{\rm sand}(1) = f_{\rm shale}(1) = 0$, and the convection is the exclusive of the volume mass difference 
between the oil, which is lighter, and the water. 
The convection functions are given by:
$$
f_{\rm sand}(u) = 100 * f_{\rm shale}(u) = 50 * \frac{u^2 (1-u^2)}{1-2 u + 2 u^2}.
$$
The capillary pressures are first given by 
$$
\pi_{\rm sand}(u) = u^5, \qquad\qquad \pi_{\rm shale}(u) =0.5 + u^5.
$$
The function $\varphi_{\rm sand}$ and $\varphi_{\rm shale}$, given by 
$$
\varphi_{\rm sand}(u) = 10* \int_0^u  \frac{s^2 (1-s^2)}{1-2 s + 2 s^2} \pi_{\rm sand}'(s) ds, \qquad         
\varphi_{\rm shale}(u) = 0.1* \int_0^u  \frac{s^2 (1-s^2)}{1-2 s + 2 s^2} \pi_{\rm shale}'(s) ds,
$$
are computed using an approximate integration formula.
The initial data $u_0$ is equal to $0$, and $\uu = 0.001$, $\ou=0$.

The convection is approximated by a Godunov scheme, defined by 
$$
G_i(a,b)=\left\{\begin{array}{ll}
\ds \min_{s\in[a,b]} f_i(s) & \textrm{if } a\le b,\\
\ds \max_{s\in[b,a]} f_i(s) & \textrm{otherwise.}
\end{array} \right.
$$
\begin{figure} 
\begin{center} \includegraphics[height=0.2\hsize]{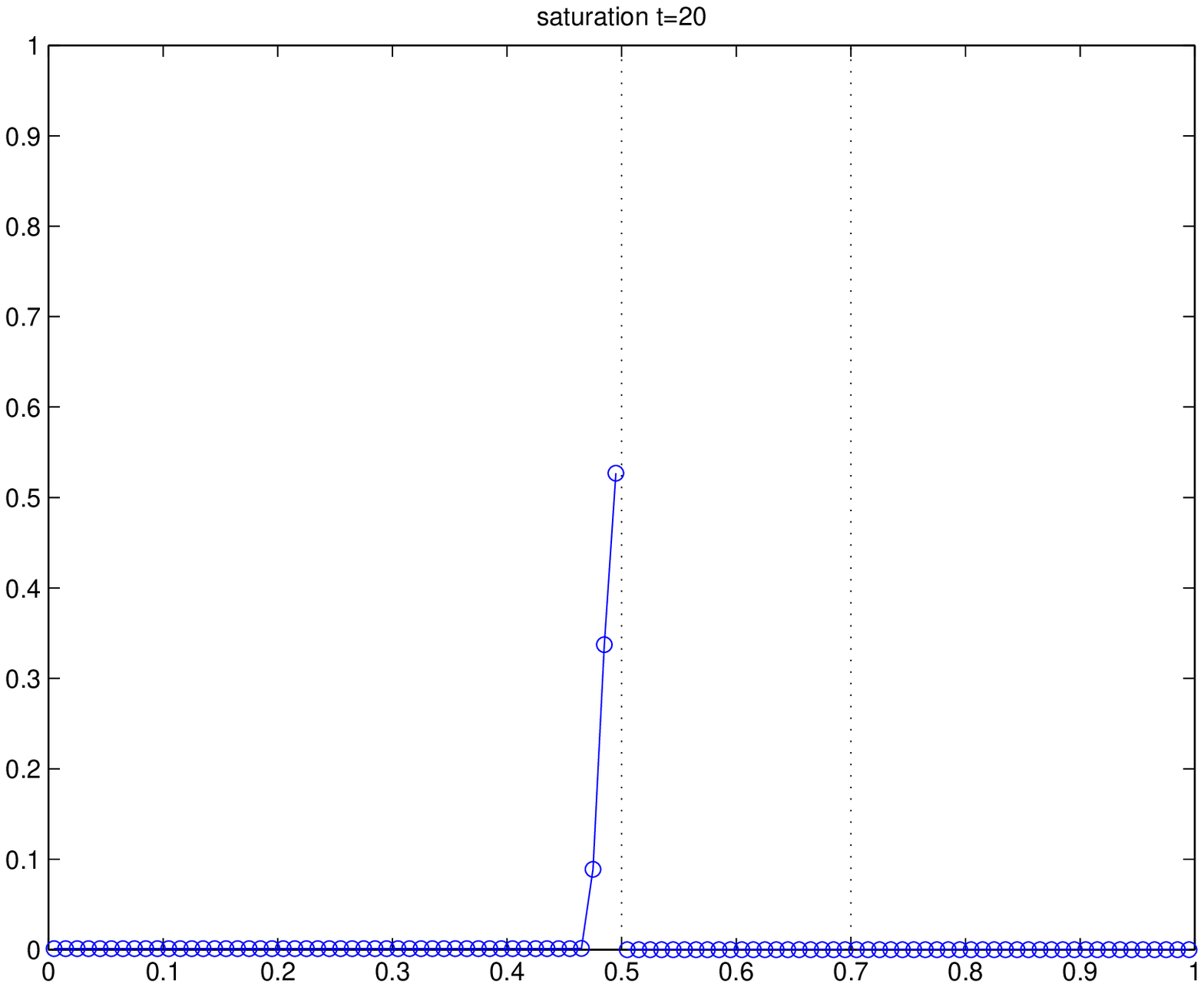} 
\quad \includegraphics[height=0.2\hsize]{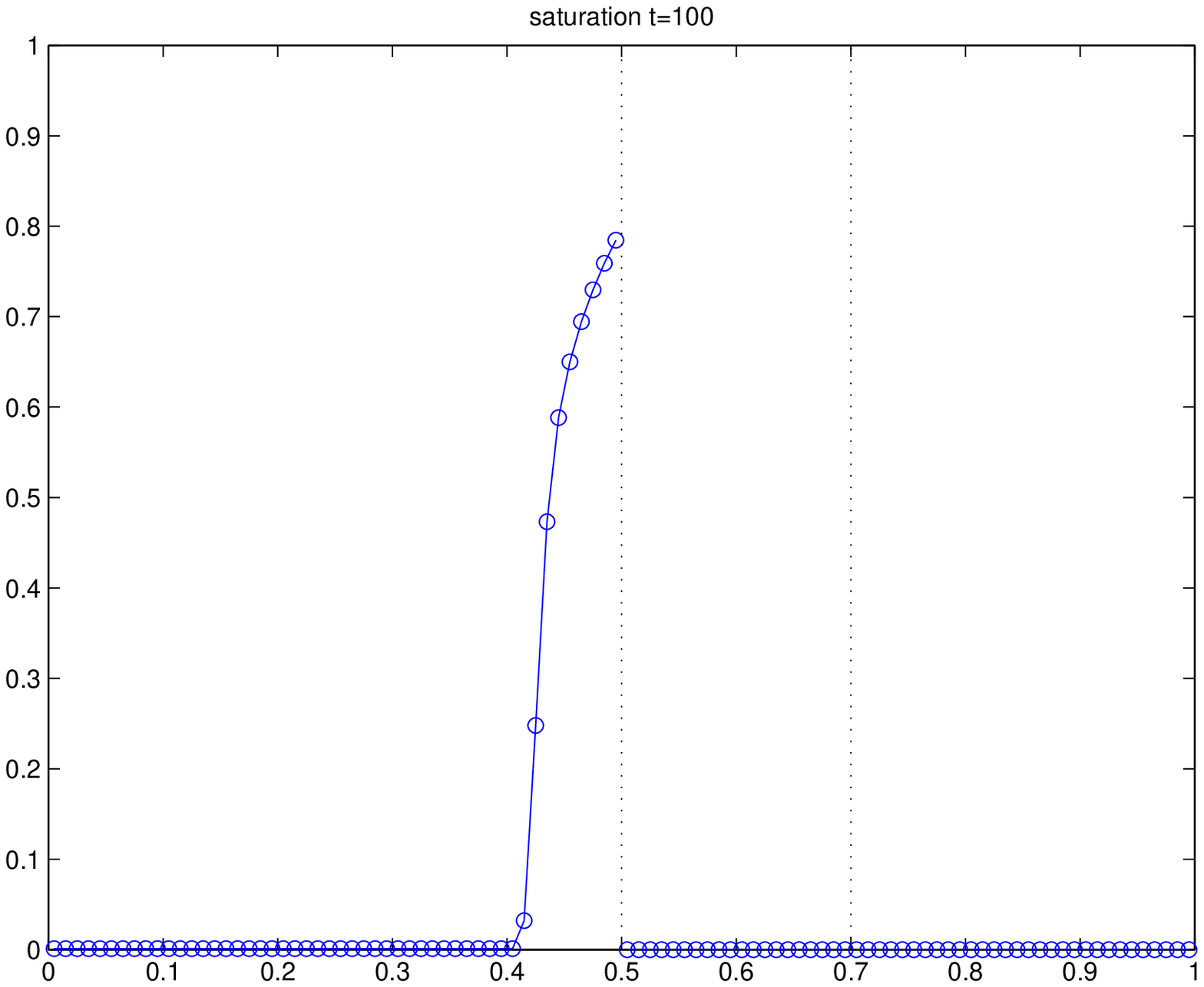} 
\quad\includegraphics[height=0.2\hsize]{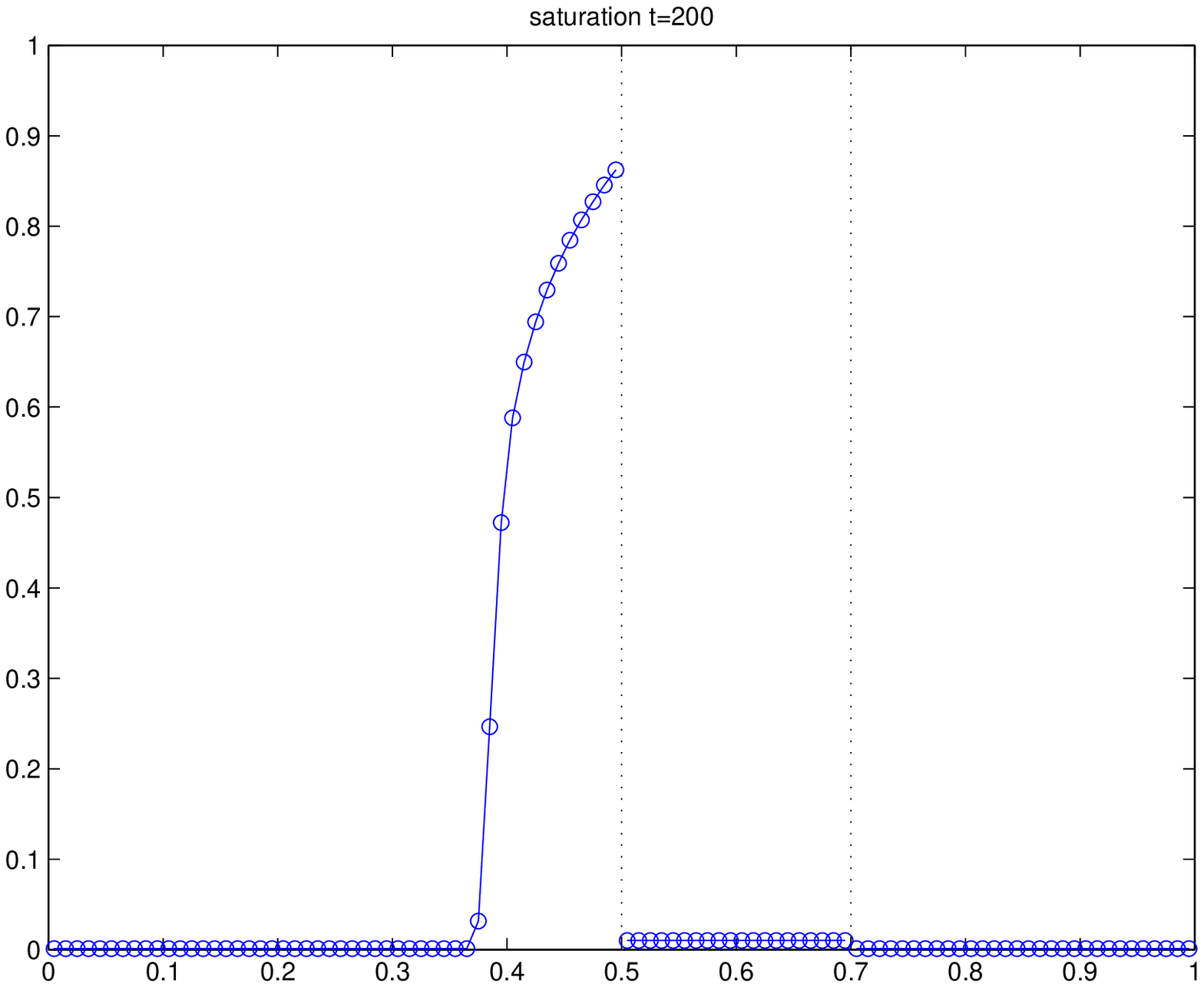} 
\caption{Saturation profiles for $t=20, t=100,t=200$}\label{sat_graph}
\end{center} 
\end{figure}

\begin{figure} 
\begin{center} \includegraphics[height=0.2\hsize]{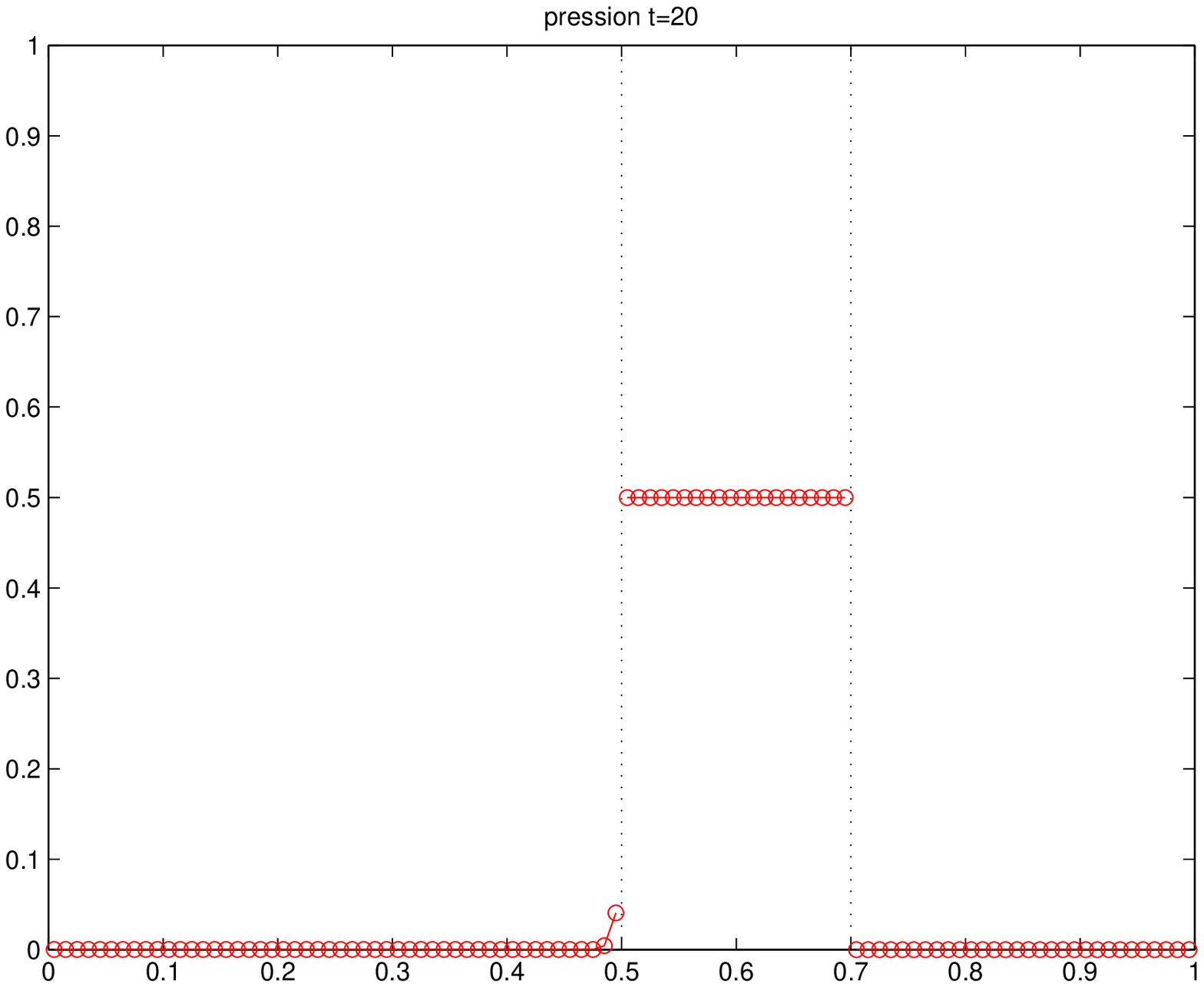} 
\quad \includegraphics[height=0.2\hsize]{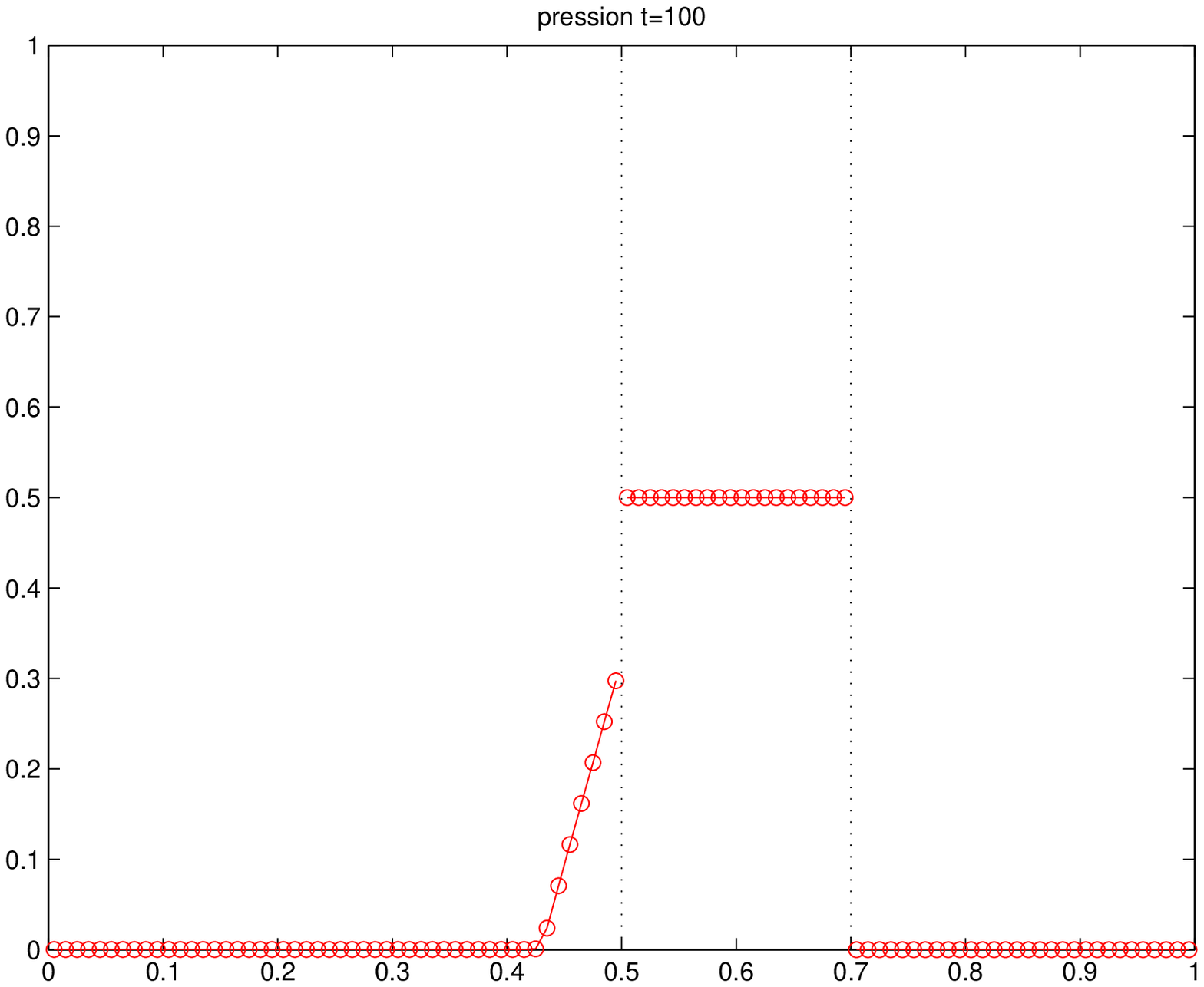} 
\quad\includegraphics[height=0.2\hsize]{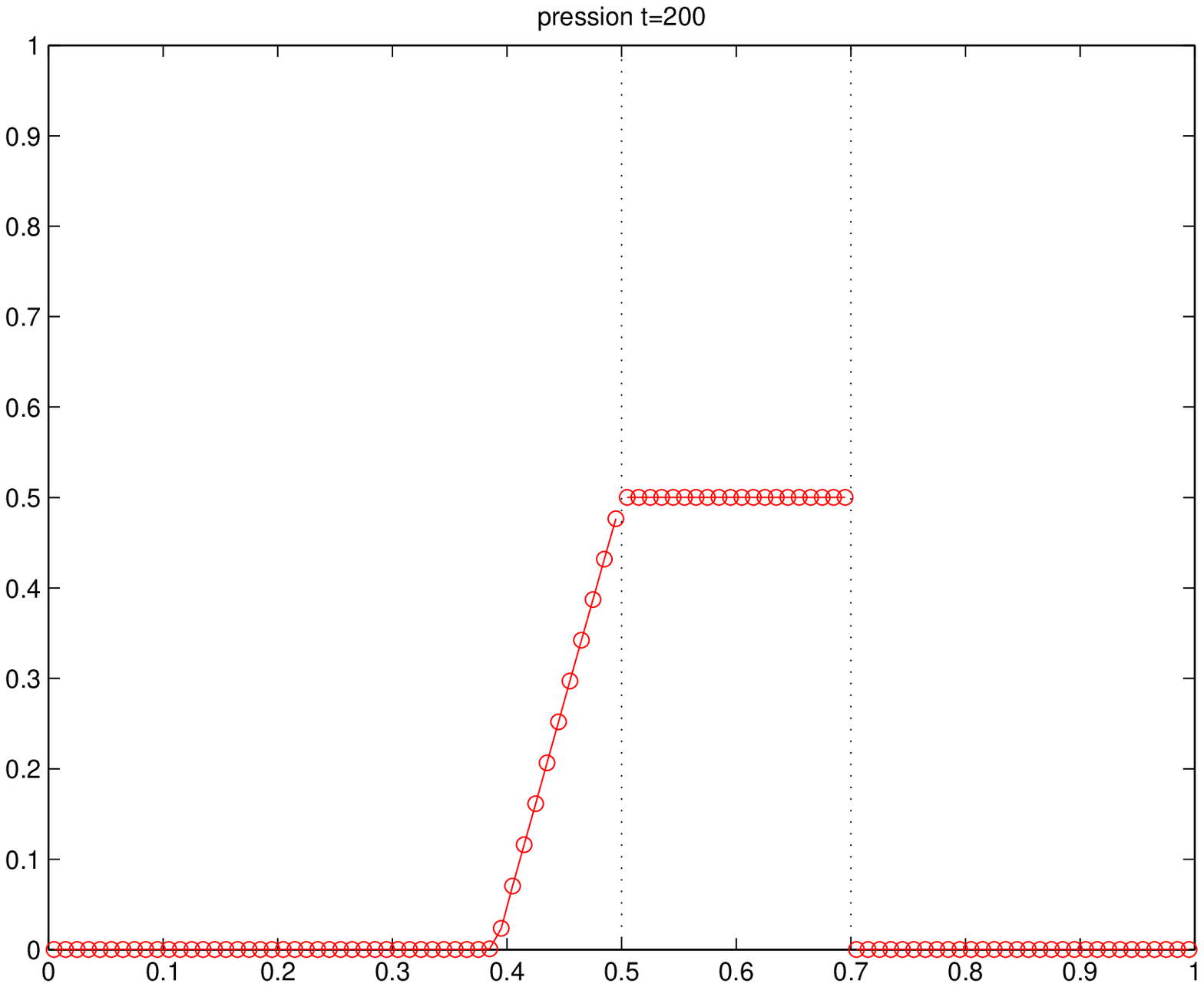} 
\caption{Capillary pressure profiles for $t=20, t=100,t=200$}\label{pres_graph}
\end{center} 
\end{figure}

\begin{figure} 
\begin{center} \includegraphics[height=0.2\hsize]{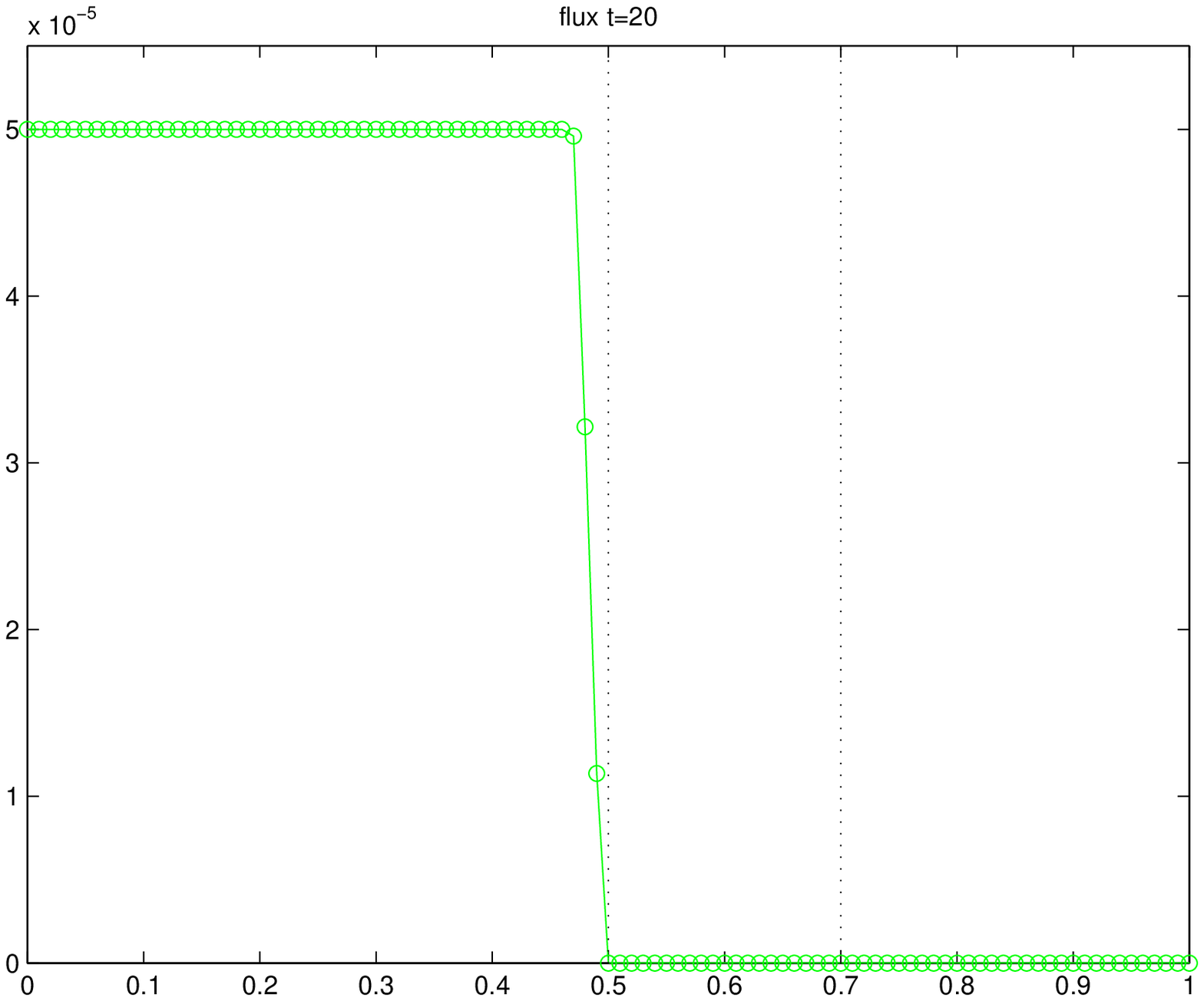} 
\quad \includegraphics[height=0.2\hsize]{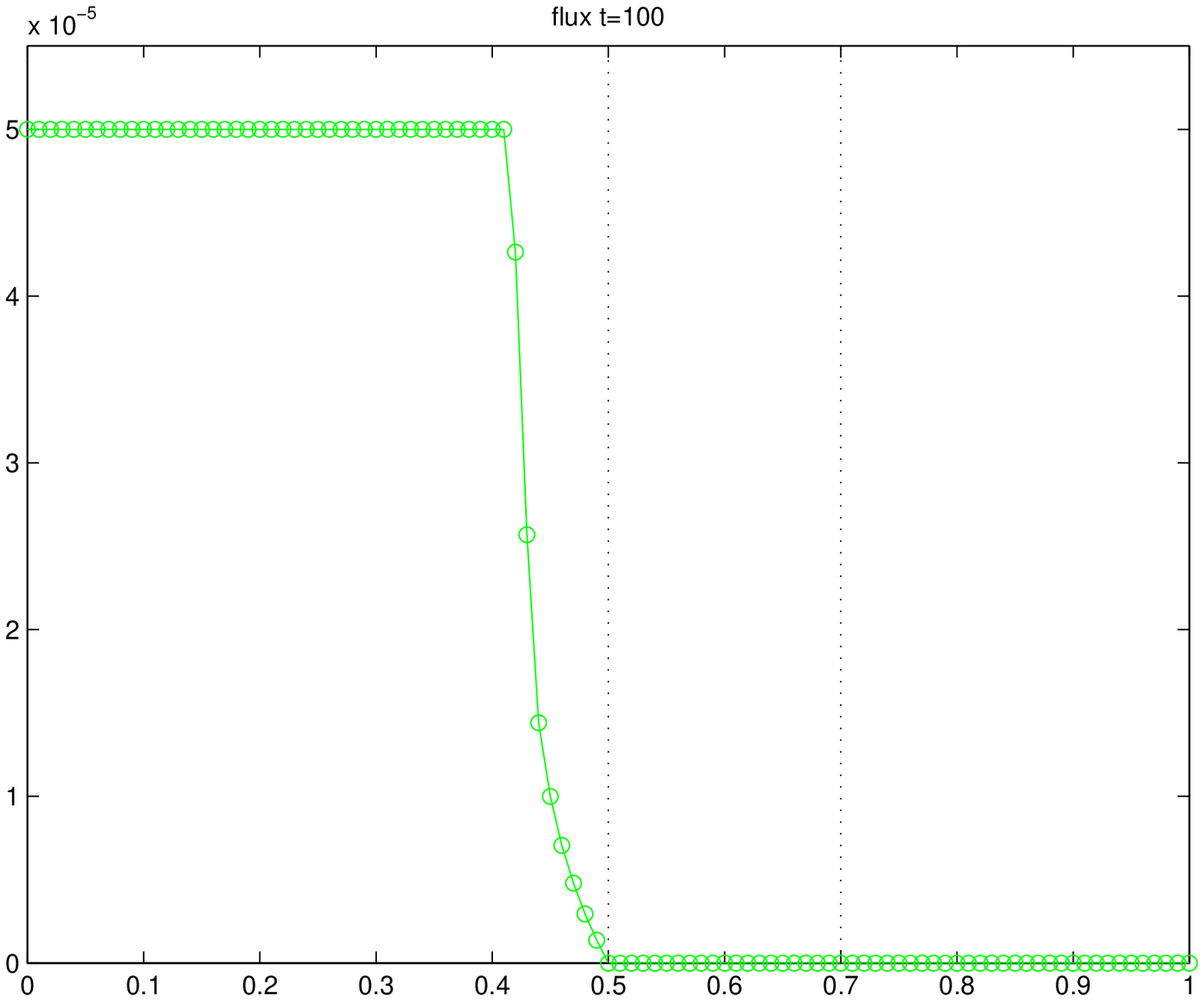} 
\quad\includegraphics[height=0.2\hsize]{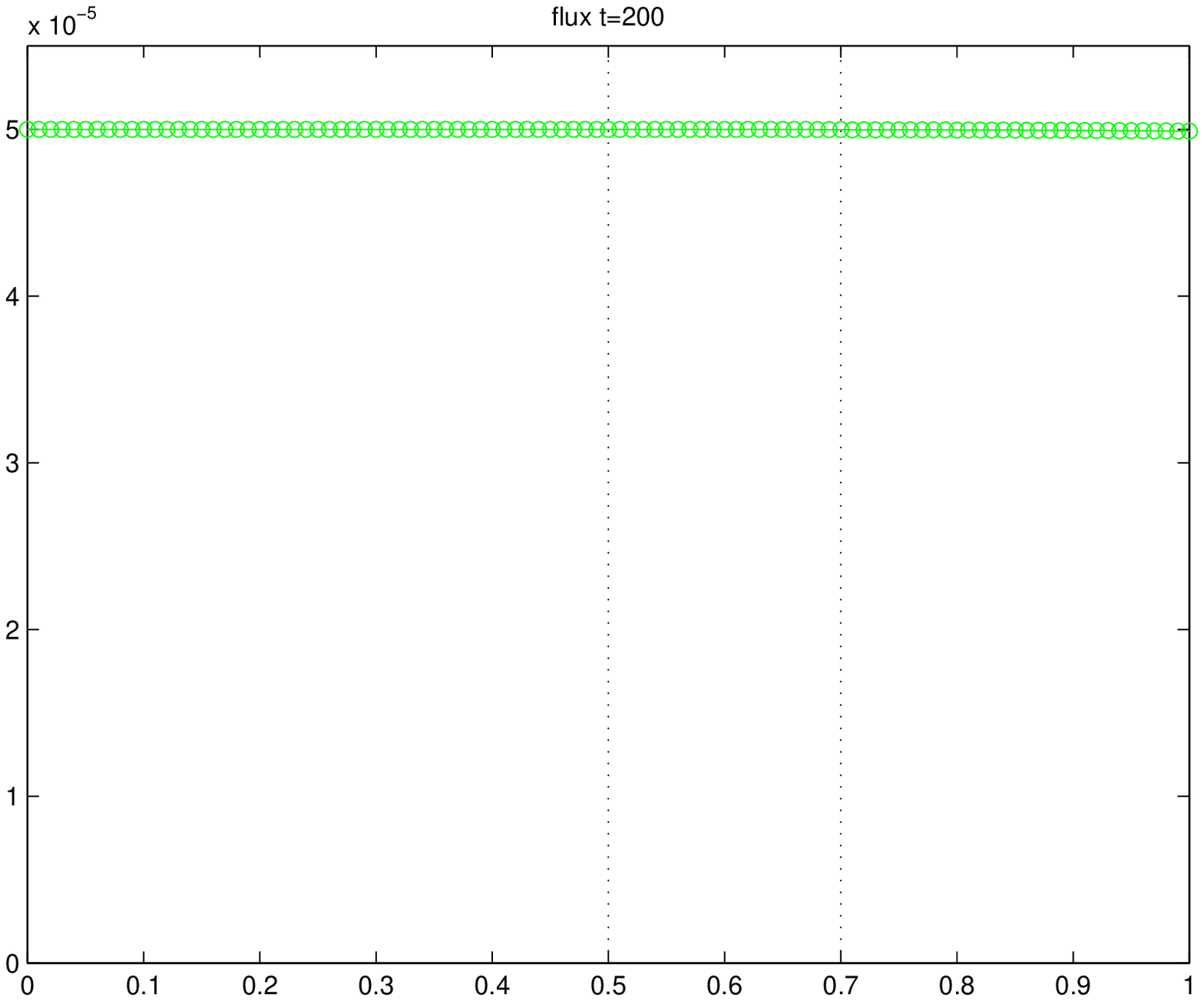} 
\caption{Oil-flux profiles for $t=20, t=100,t=200$}\label{flux_graph}
\end{center} 
\end{figure}

A little quantity of oil enters the domain from the left boundary condition, and it moves forward in the first part made of sand. 
The discontinuity of the capillary pressure (figure \ref{pres_graph}) stops the migration of oil, which begins to collect at the left of the 
interface, as shown on the figure \ref{sat_graph}.  One can check on the figure \ref{flux_graph} that for $t$ small enough, the oil-flux through the interface $\{x=0.5\}$ is equal to $0$. The accumulation of oil at the left of $\{x=0.5\}$ implies an increase of the capillary pressure. As soon as the 
capillary pressure connects at $\{x=0.5\}$, the oil can flow through the shale. The next discontinuity at $\{x=0.7\}$ does not impede the progression of the oil, since the capillary pressure force, oriented from the large pressure to the small pressure (here from the left to the right), works in the same direction that  the buoyancy, which drives the migration of oil. 

For $t=200$, the presented solution is a steady solution, with constant flux (figure \ref{flux_graph}). Some oil remains blocked in the first subdomain 
$(0,0.5)$. Even if one puts $\uu(t)=0$ for $t\ge 200$, the main proportion of oil in the porous medium can not overpass the interface $\{x=0.5\}$ and leave the porous medium $(0,1)$. Indeed, the function defined by 
$$
u^{\rm s}(x) = \left\{  \begin{array}{ll}
0 & \textrm{ if } x \notin (0.4,0.5),\\
\pi_{\rm sand}^{-1}(5(x-0.4)) & \textrm{ if } x \in (0.4,0.5)
\end{array}  \right.
$$
is a steady solution to the problem for $\uu=0$. It is easy to check that $u(\cdot,200) \ge u^{\rm s}$, thus the comparison principle stated in the Theorem \ref{unicite_bounded} ensures that for all $t\ge200$, $u(\cdot,t) \ge u^{\rm s}$. Thus for all $t\ge 200$
$$
\int_0^{0.5} u(x,t) dx \ge \int_0^{0.5} u^{\rm s}(x) dx >0.
$$
This quantity is said to be trapped by the geology change. Further illustrations, and a scheme comparison will be given in \cite{MCGP08}.

{\bf Second case:}\\
We only change the values of the capillary pressure functions (and also the linked functions $\varphi_{\rm sand}$ and $\varphi_{\rm shale}$).
The amplitude of the variation of each function is reduced from $1$ to $0.2$, i.e.
$$
\pi_{\rm sand}(u) = 0.2* u^5, \qquad\qquad \pi_{\rm shale}(u) =0.5 + 0.2*u^5.
$$
The graph transmission condition for the capillary pressure turns to 
$$
(1 - u_{\rm sand}) u_{\rm shale} = 0,
$$
where $u_{\rm sand}$ (resp $u_{\rm shale}$) denotes the trace of the oil saturation at the interfaces $\{x=0.5\}$ and $\{x=0.7\}$.
In this case, no oil can overpass the first interface, which is thus impermeable for oil. The only steady solution is 
$$
u^{\rm s}(x) = \left\{ \begin{array}{ll}
1 & \textrm{if } x<0.5,\\
0&\textrm{if } x>0.5.
\end{array}\right.
$$
An asymptotic study for capillary pressures tending to functions depending only of space, and not on the saturation has been performed in 
\cite[Chapter 5\&6]{These} (see also \cite{NPCX,NC_choc}). It has been proven that either the limit solution for the saturation is an entropy solution 
for th          e hyperbolic scalar conservation law with discontinuous fluxes in the sense of \cite{Tow00, Tow01,SV03, AV03, AJV04, AMV05, AMV07, Bac04,BV05,Bac06,Bachmann_these,Jim07} (see also \cite{KRT02a,KRT02b,KRT03}), mainly when the capillary forces at the interface are oriented in the same direction that the gravity forces, or that non-classical shocks can occur at the interfaces when the capillary forces and the gravity are oriented in opposite directions.
\begin{figure} [htb]
\begin{center} \includegraphics[height=0.2\hsize]{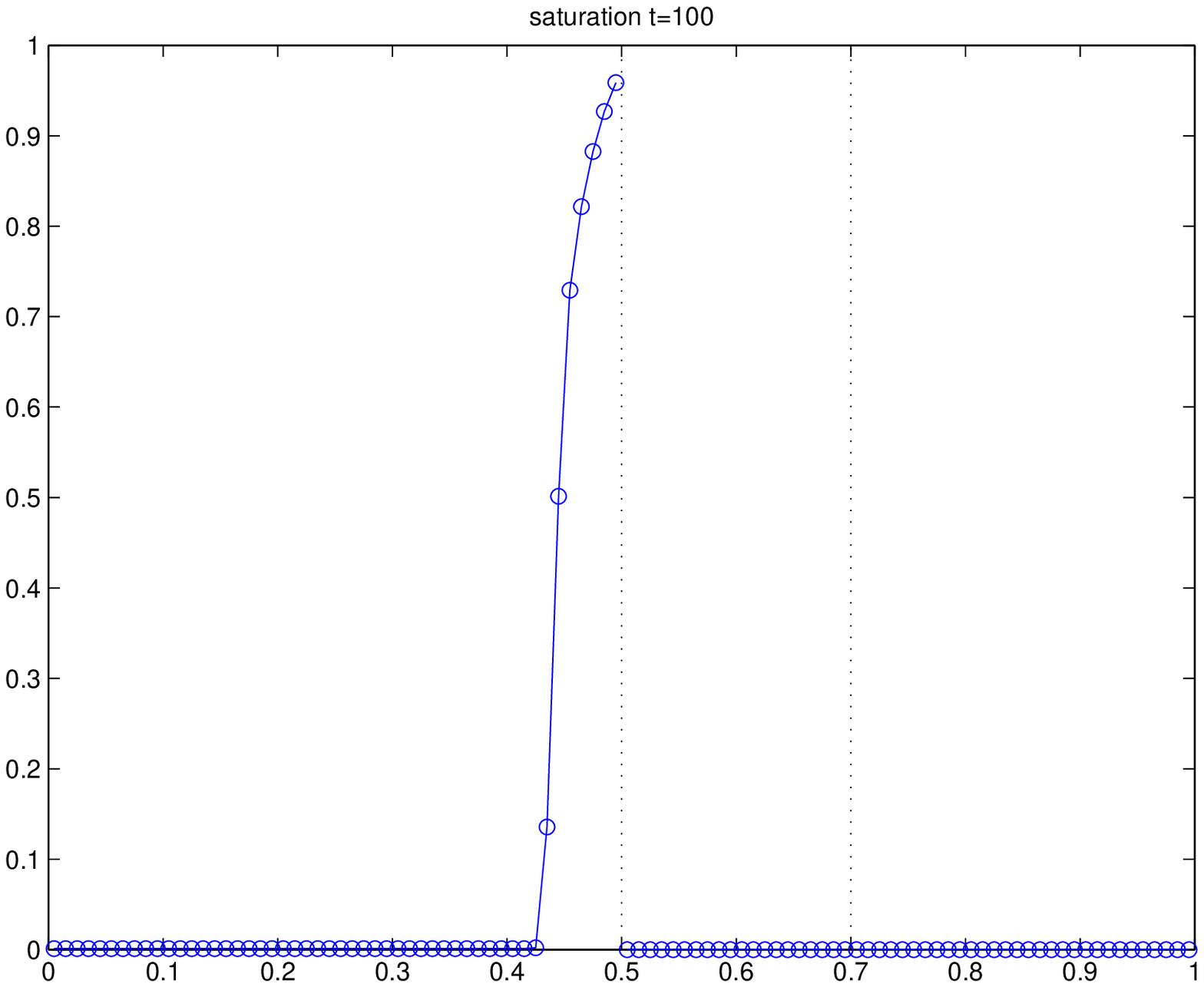} 
\quad \includegraphics[height=0.2\hsize]{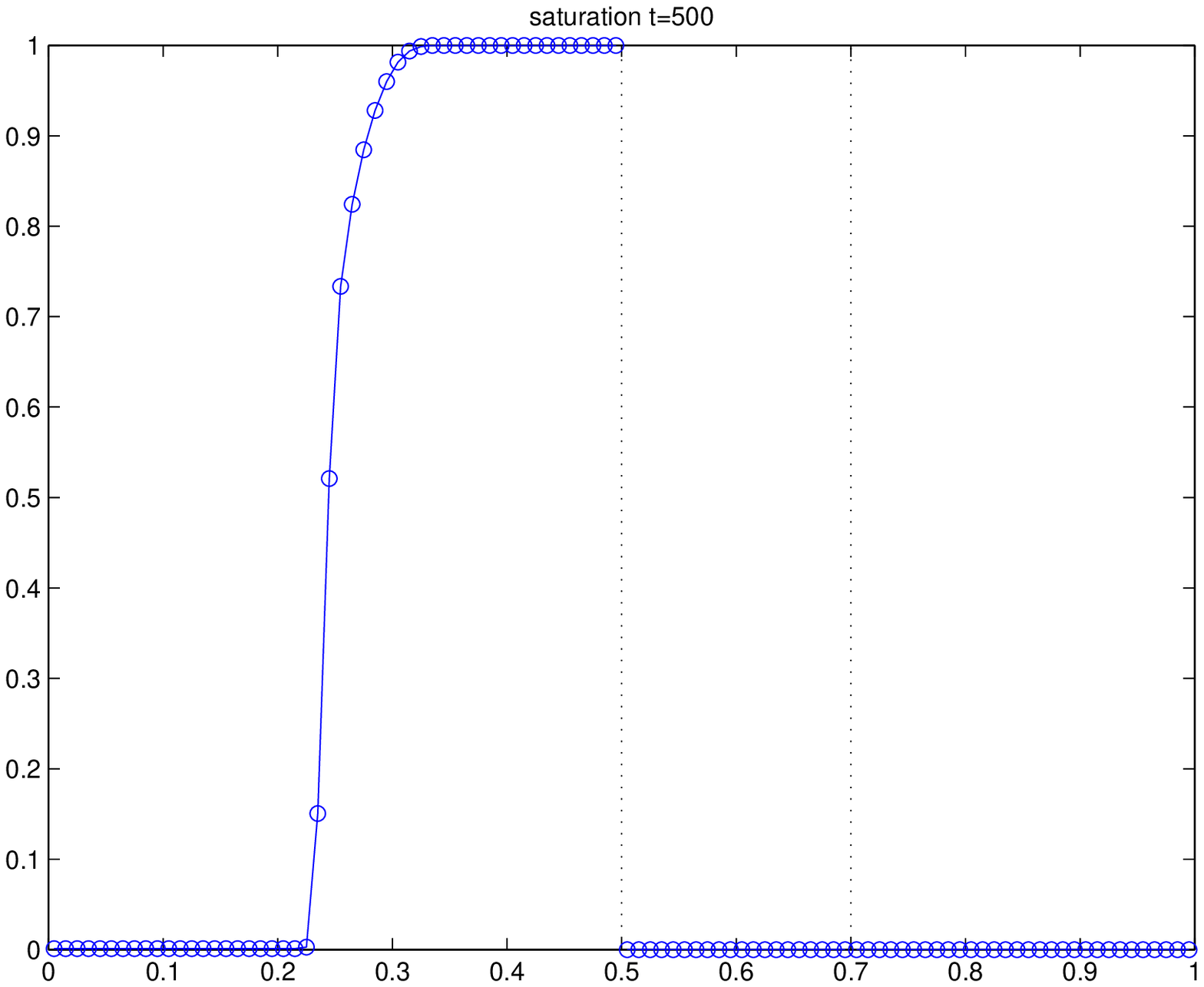} 
\quad\includegraphics[height=0.2\hsize]{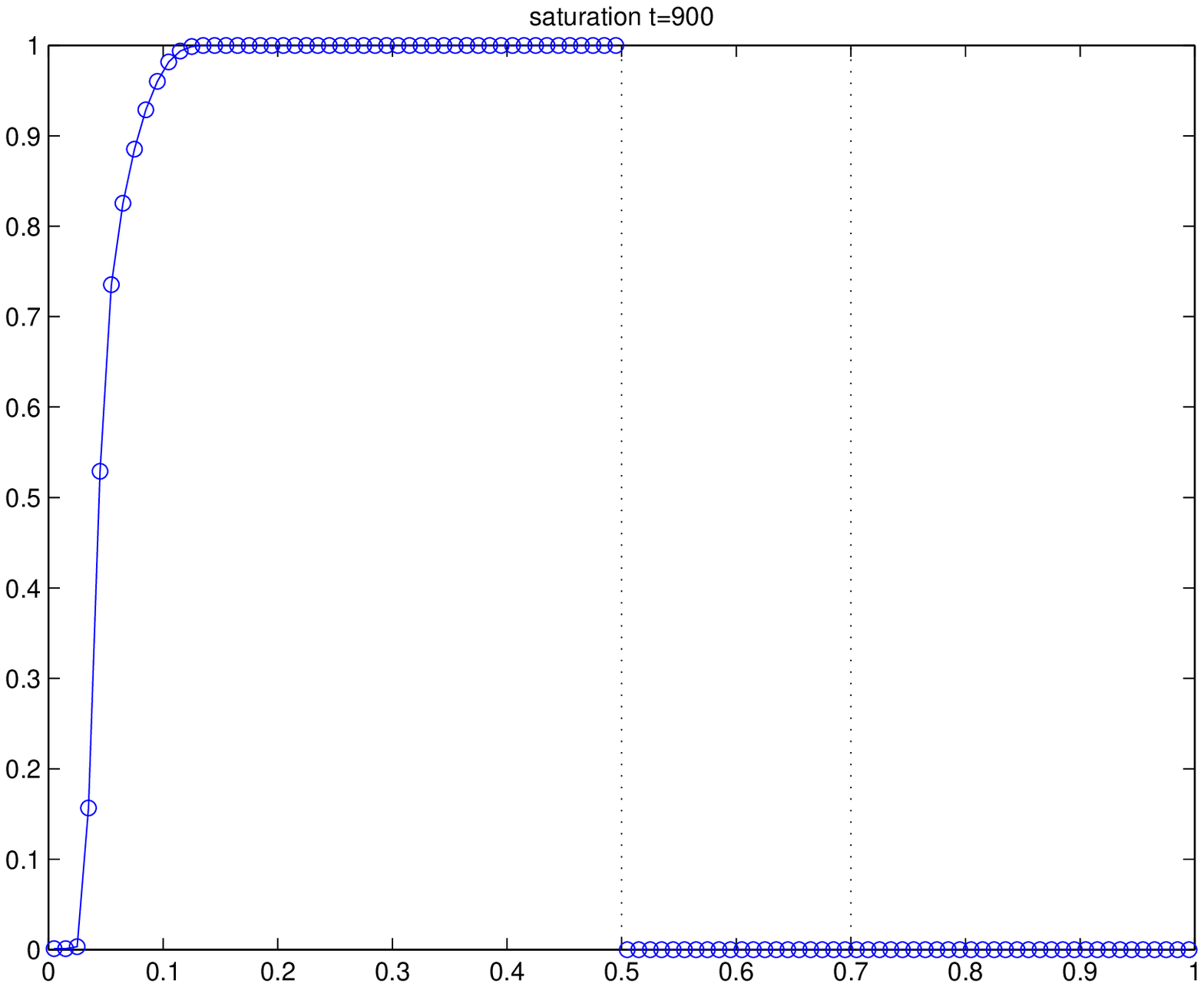} 
\caption{Saturation profiles for $t=100, t=500,t=900$}\label{sat_graph2}
\end{center} 
\end{figure}

\begin{figure} [htb]
\begin{center} \includegraphics[height=0.2\hsize]{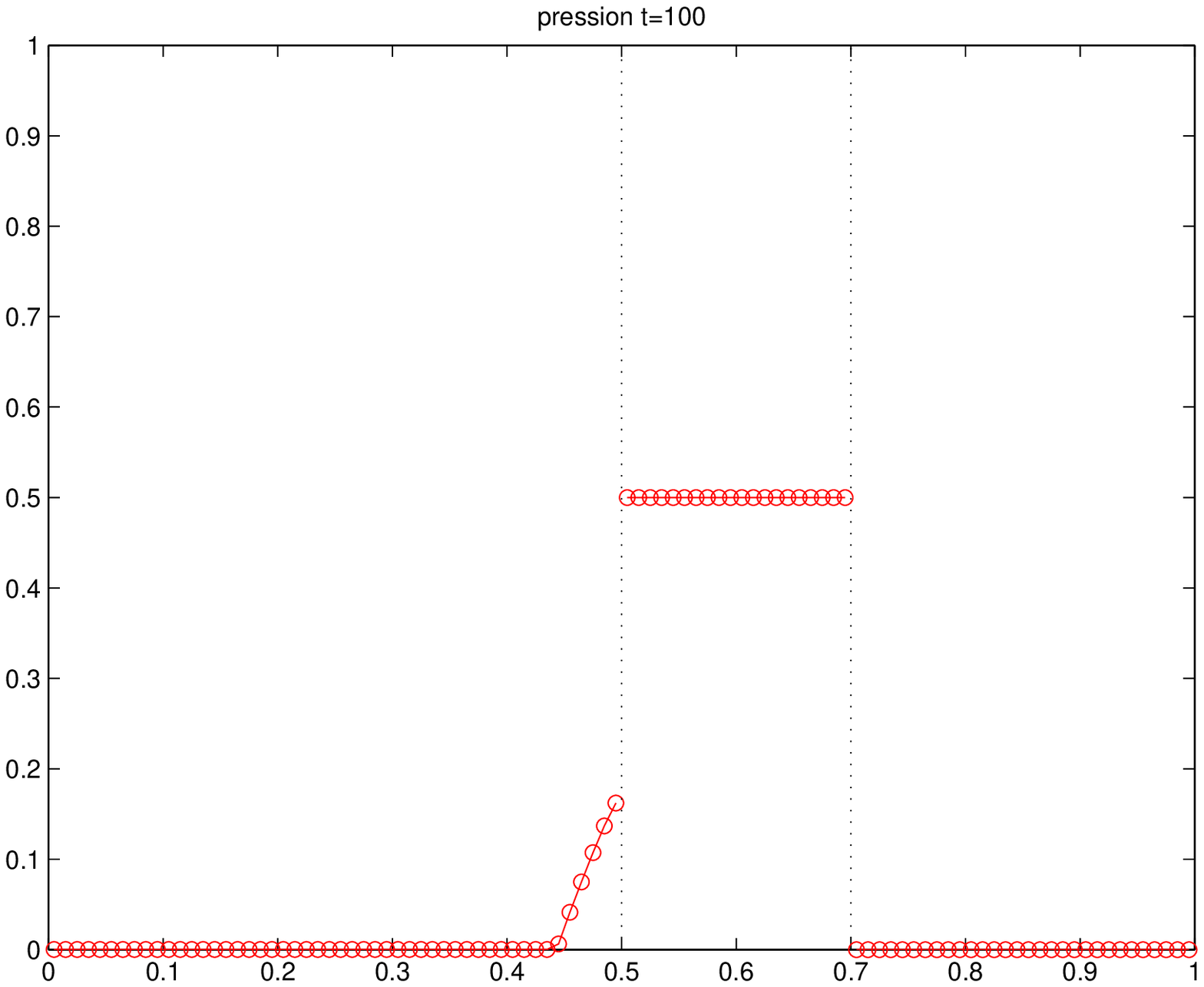} 
\quad \includegraphics[height=0.2\hsize]{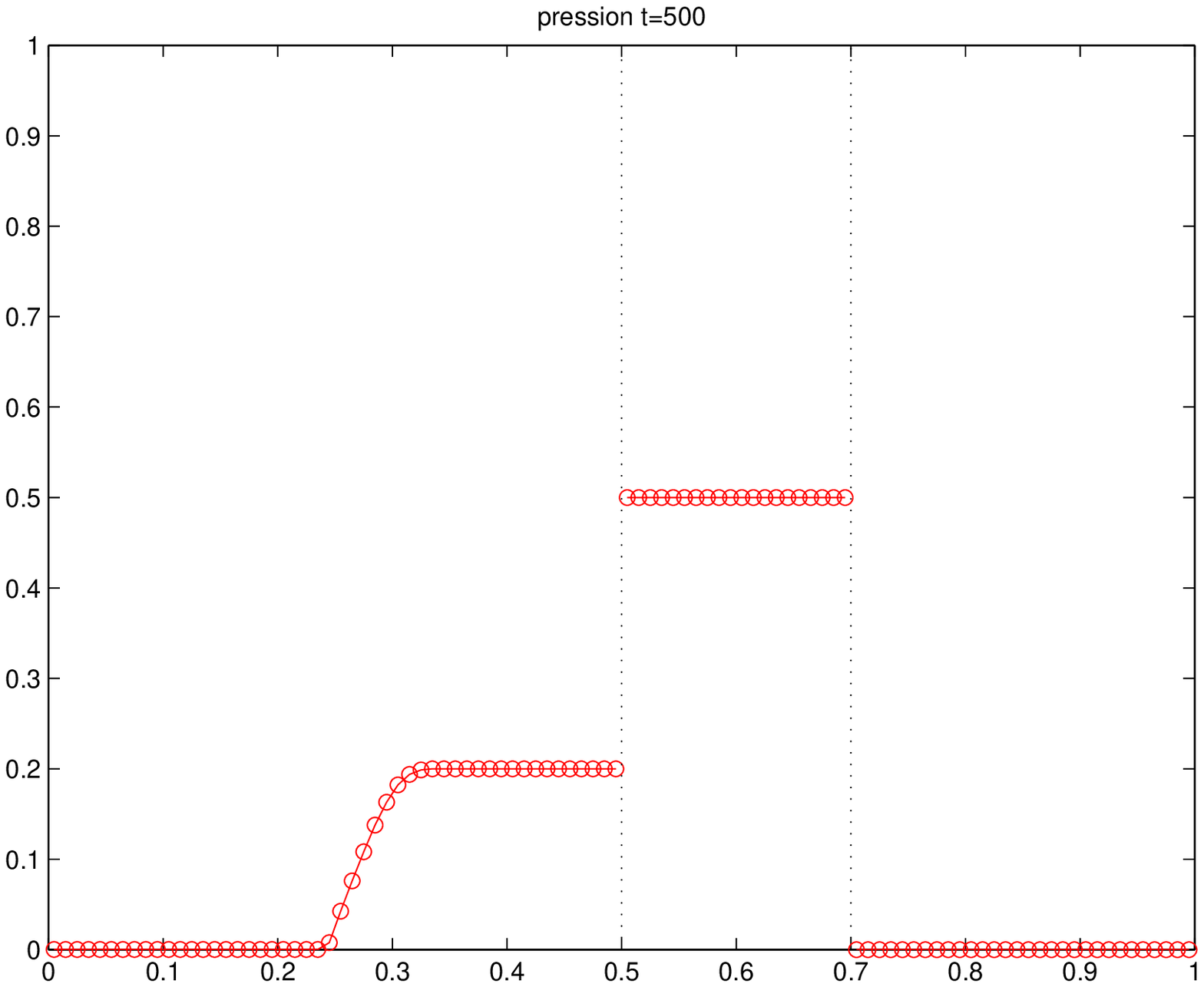} 
\quad\includegraphics[height=0.2\hsize]{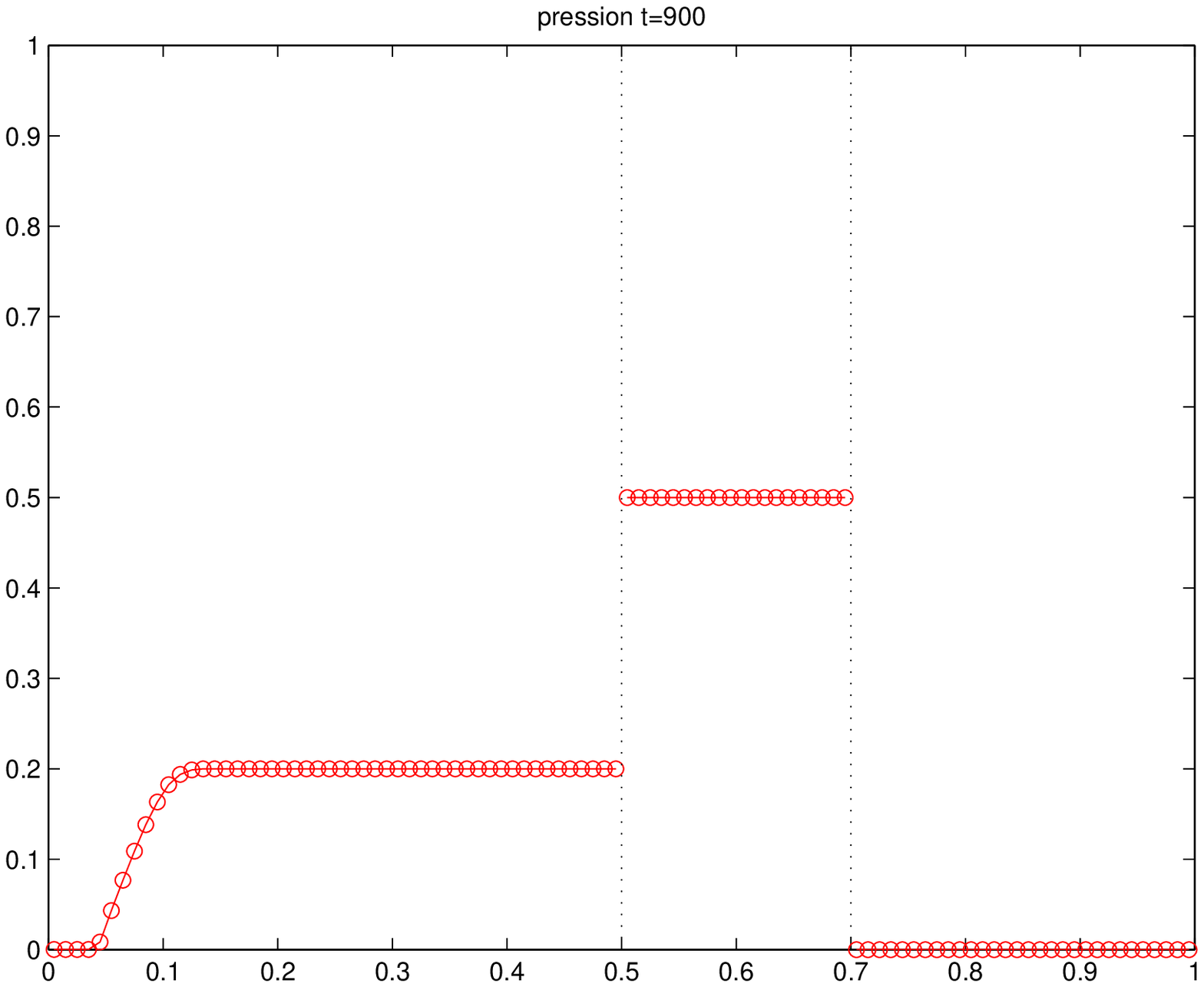} 
\caption{Capillary pressure profiles for $t=100, t=500,t=900$}\label{pres_graph2}
\end{center} 
\end{figure}

\begin{figure} [htb]
\begin{center} \includegraphics[height=0.2\hsize]{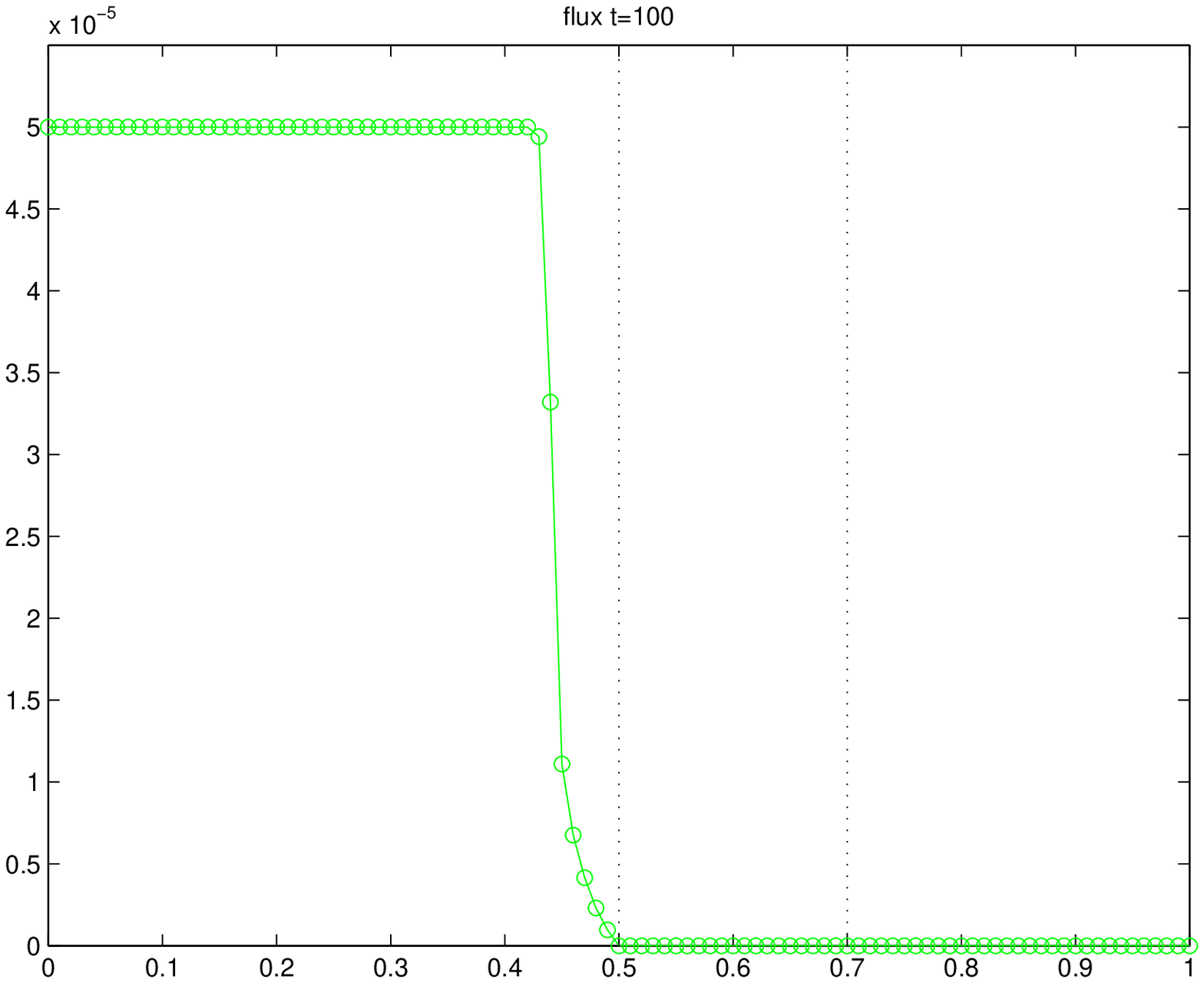} 
\quad \includegraphics[height=0.2\hsize]{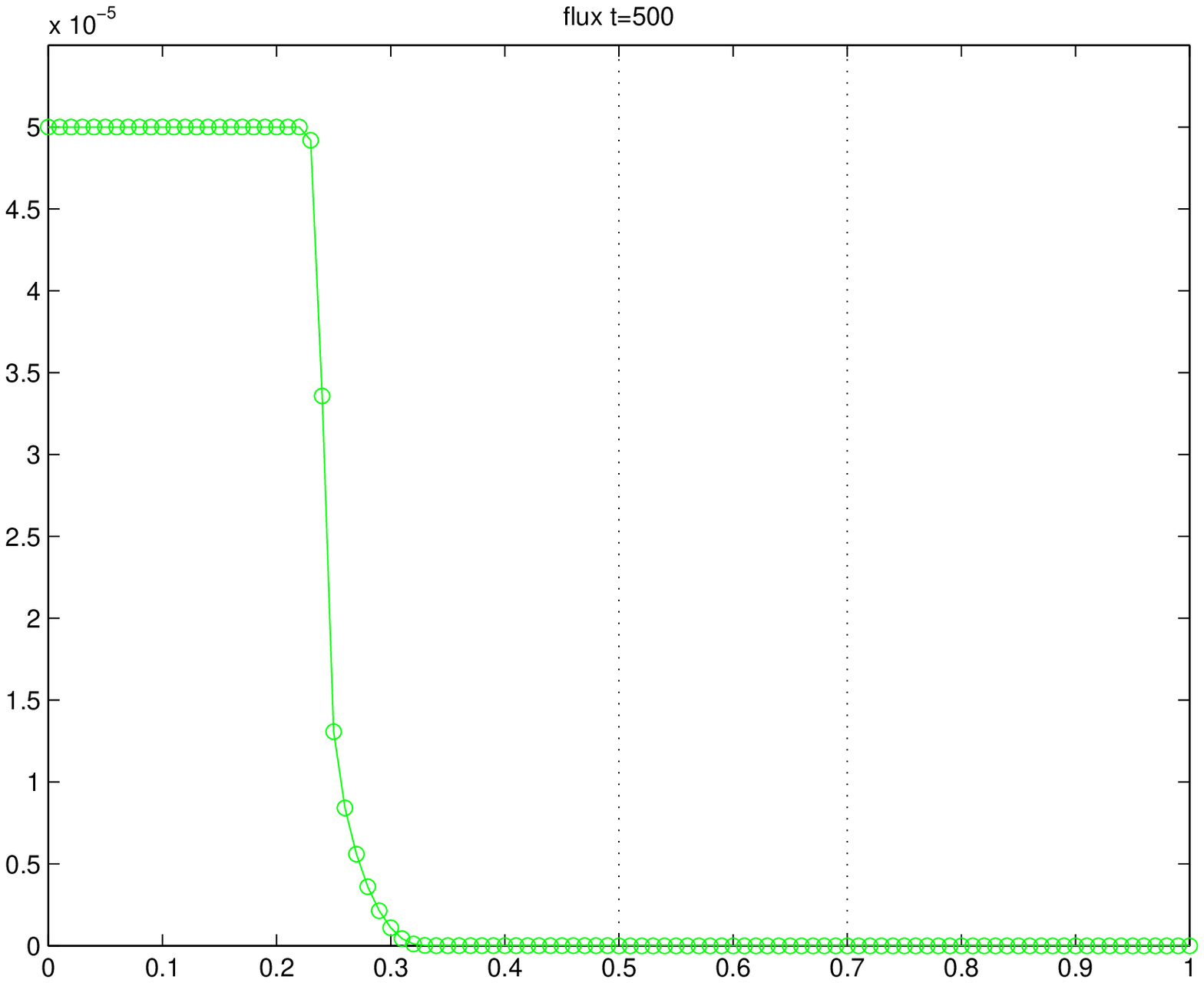} 
\quad\includegraphics[height=0.2\hsize]{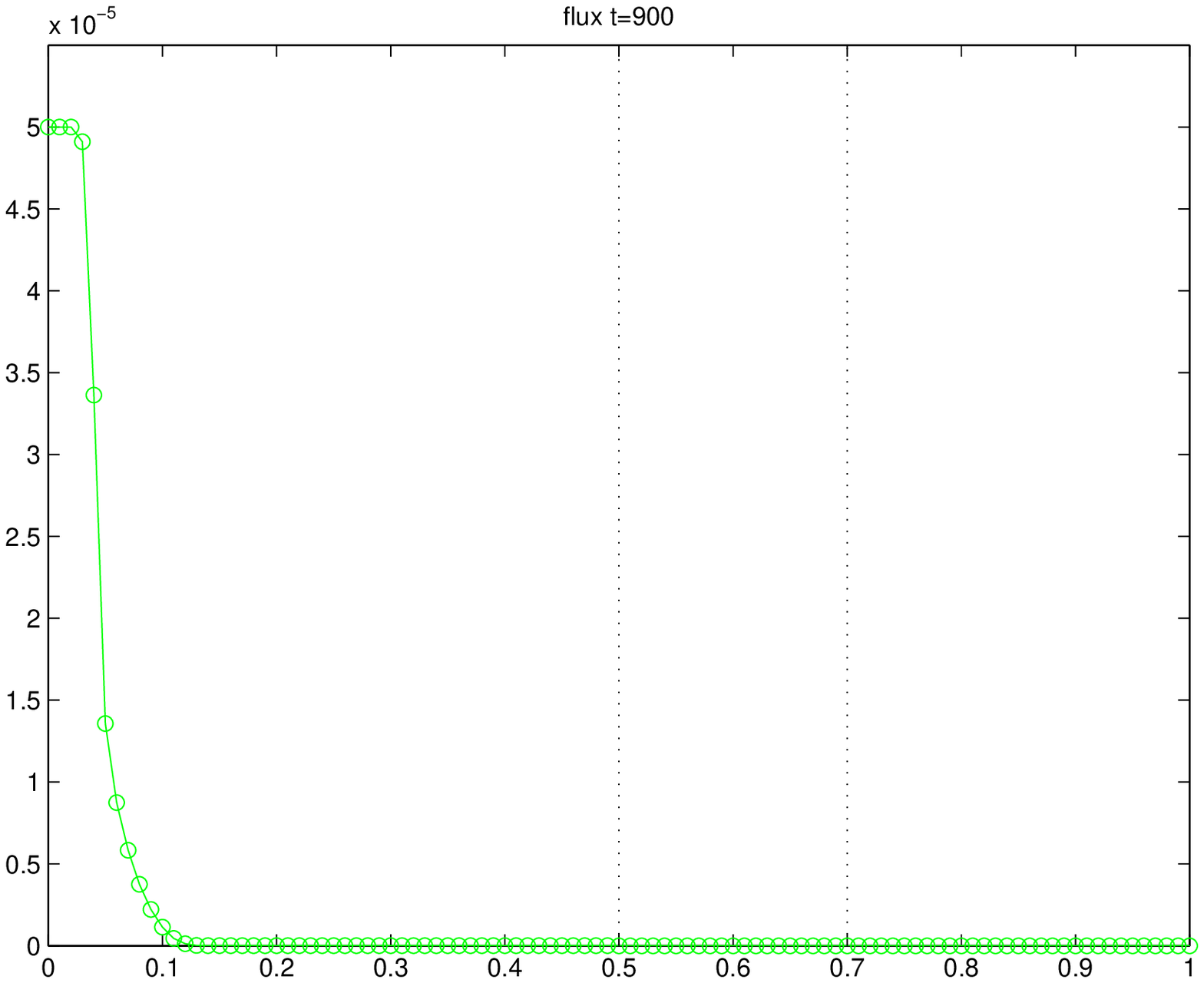} 
\caption{Oil-flux profiles for $t=100, t=500,t=900$}\label{flux2_graph}
\end{center} 
\end{figure}

\paragraph{Acknowledgements.}
The author would like to acknowledge the Professor Thierry Gallou\"et for his numerous recommendations and  Anthony Michel from IFP for the fruitful discussions on the models. He also thanks Alice Pivan for her help with the English language.

\bibliographystyle{plain}
\bibliography{ccances}
\end{document}